\documentclass[reqno,11pt]{amsart}
\usepackage{amssymb, amsmath,latexsym,amsfonts,amsbsy, amsthm}
\usepackage{color}
\usepackage{tikz}
\usetikzlibrary{positioning}

\allowdisplaybreaks[4]

\newcommand{\beq}{\begin{equation}}
	\newcommand{\eeq}{\end{equation}}
\newcommand{\ben}{\begin{eqnarray}}
	\newcommand{\een}{\end{eqnarray}}
\newcommand{\beno}{\begin{eqnarray*}}
	\newcommand{\eeno}{\end{eqnarray*}}

\newtheorem{remark}{Remark}[section]

\renewcommand{\theequation}{\thesection.\arabic{equation}}

\newtheorem{theorem}{Theorem}[section]

\newtheorem{lemma}[theorem]{Lemma}
\newtheorem{proposition}[theorem]{Proposition}

\numberwithin{equation}{section} 
\newtheorem{Theorem}{Theorem}[section]

\newtheorem{Remark}[Theorem]{Remark}

\begin{document}

\title[Vanishing viscosity limit in a strip]
	{Steady symmetric Navier-Stokes flow near Poiseuille flow: existence and vanishing viscosity limit}
	\date{\today}

\author{Zeyu Jia}
\address{School of  Mathematics, Shandong University, jinan, Shandong, 250100, China}
\email{jzy280012@126.com}

\author{Jia Song}
\address{School of  Mathematics, Shandong University, jinan, Shandong, 250100, China}
\email{jiamaths@mail.sdu.edu.cn}

\author{Tao Tao}
\address{School of  Mathematics, Shandong University, jinan, Shandong, 250100, China}
\email{taotao@sdu.edu.cn}

	\date{\today}
\maketitle

\renewcommand{\theequation}{\thesection.\arabic{equation}}
\setcounter{equation}{0}
\begin{abstract}
    We study the existence and vanishing viscosity limit of steady incompressible Navier–Stokes flows in a horizontally periodic strip with no-slip boundary conditions and midline-symmetric forcing of order $\epsilon$, where $\epsilon>0$ denotes the viscosity. Taking a Poiseuille flow as the leading-order Euler profile, we construct higher-order symmetric approximate solutions and establish the existence of a symmetric solution to the associated nonlinear error equation. Under suitable compatibility conditions on the forcing, this construction yields a family of midline-symmetric steady Navier–Stokes solutions that converges to the prescribed Poiseuille flow in $L^\infty$ at the optimal rate $O(\epsilon)$ as $\epsilon\to 0$.
\end{abstract}

\numberwithin{equation}{section}

\indent

\section{Introduction}
\indent 
The vanishing viscosity limit of the steady incompressible
Navier--Stokes equations is a fundamental problem in the mathematical
theory of fluid mechanics. It concerns the approximation of viscous
flows by solutions of the Euler equations as the viscosity tends to
zero. In domains with solid boundaries, this limit is particularly
delicate because the no-slip boundary condition for the Navier--Stokes
equations generally differs from the impermeability condition imposed
on the Euler equations. This mismatch may generate boundary layers,
so that simply neglecting the viscous term does not necessarily
provide a valid approximation near the boundary. A rigorous
justification therefore requires both a suitable construction of
approximate solutions and quantitative control of the corresponding
remainders; see, for instance, \cite{FGLT1, FGLT2, FPZ, GZ,GZ2,GN,GI1,GI2, Iyer1, Iyer2, Iyer3, IM1, IM2, Iyer4}. 
For results on the Sobolev stability of certain classes of shear flows
of Prandtl type, we refer to \cite{CWZ,GMM}. 

In the stationary setting, the inviscid limit leads to a natural
construction problem: given a steady Euler flow, can one construct
steady Navier--Stokes solutions that satisfy the prescribed boundary
conditions and converge to that Euler flow as the viscosity vanishes?
This problem is closely related to the selection of steady Euler
flows through the vanishing viscosity limit, a subject with classical
contributions by Prandtl \cite{prandtl} and Batchelor \cite{B}.
Further developments can be found in
\cite{edwards,feymann-lagerstrom,GYX,kim-formula,riley81,van wijngarden,W}.
The objective is not merely to establish the existence of viscous
solutions for each fixed positive viscosity, but to construct a
family of solutions whose asymptotic behavior can be controlled
as the viscosity tends to zero.

In this paper, we address this problem in a horizontally periodic
strip with no-slip boundary conditions and an external force that
is symmetric with respect to the midline. We choose a Poiseuille
flow as the leading-order Euler profile and construct a family of
symmetric steady Navier--Stokes solutions converging to this profile.
The external force is of the same order as the viscosity, and our
main result gives a uniform convergence rate of order $\epsilon$,
where $\epsilon>0$ denotes the viscosity.

More precisely, we consider the forced steady Navier-Stokes equations in a horizontally periodic strip $ \Omega = \mathbb{T} \times [0, 1] $ 
\begin{align} \label{NS equation}
\left \{
\begin{aligned}
    &u^{\epsilon } \partial_{x} u^{\epsilon } + v^{\epsilon } \partial_{y} u^{\epsilon } + \partial_{x} p^{\epsilon }-\epsilon\triangle u^{\epsilon } =\epsilon F_{u}, \\[5pt]
    &u^{\epsilon } \partial_{x} v^{\epsilon } + v^{\epsilon } \partial_{y} v^{\epsilon } + \partial_{y} p^{\epsilon }-\epsilon\triangle v^{\epsilon } =\epsilon F_{v}, \\[5pt]
    &\partial_{x} u^{\epsilon } + \partial_{y} v^{\epsilon } =0, \\[5pt]
\end{aligned}
\right .\
\end{align}
with no-slip  boundary condition
\begin{equation} \label{D-BC}
\begin{aligned}
    (u^{\epsilon }, v^{\epsilon })|_{y=1} = (0, 0),~~\forall  x\in[0, 2\pi ], \\
    (u^{\epsilon }, v^{\epsilon })|_{y=0} = (0, 0), ~~\forall  x\in[0, 2\pi ]. 
\end{aligned}
\end{equation}
Here $\epsilon$ is the viscosity, $(u^{\epsilon}, v^{\epsilon})$ is the velocity, $p^{\epsilon}$ is the pressure, $(F_u, F_v)$ are external force. We assume that the force $(F_u, F_v)$ are smooth, compactly supported in $ \Omega$,  and satisfy the following  conditions
\begin{align} \label{compatibility}
\begin{aligned}
    F_u (x, y)=F_u(x, 1-y),~~F_v(x, y) = -F_v(x, 1-y),~~ \int_0^{2\pi}F_u(x,y)dx=0, \forall y\in [0,1].
\end{aligned}
\end{align}
Thus, the horizontal component of the force is even under reflection
about $y=\frac12$, whereas the vertical component is odd.

Formally letting $\epsilon\to0$ in \eqref{NS equation} yields
the two-dimensional steady Euler equations
\begin{align} \label{steady Eule equation}
\left \{
\begin{aligned}
    &u_{e}\partial_{x} u_{e} + v_{e}\partial_{y}u_{e} + \partial_{x}p_{e} = 0, \\
    &u_{e}\partial_{x} v_{e} + v_{e}\partial_{y}v_{e} + \partial_{y}p_{e} = 0, \\
    &\partial_{x} u_{e} + \partial_{y} v_{e} = 0, \\
    &v_{e}|_{y=0,1} = 0, ~~ \forall x \in [0, 2\pi].
\end{aligned}
\right .\
\end{align}
We take the leading-order Euler flow to be the Poiseuille profile
\[
    \boldsymbol U_e=(u_e(y),0),
    \qquad
    u_e(y)=\alpha y(1-y),
\]
where $\alpha>0$ is fixed. Since $\boldsymbol U_e$ is independent
of the horizontal variable and has no vertical component,
\[
    \nabla\cdot\boldsymbol U_e=0,
    \qquad
    (\boldsymbol U_e\cdot\nabla)\boldsymbol U_e=0.
\]
Consequently, it solves \eqref{steady Eule equation} with constant
pressure. Moreover,
\[
    u_e(0)=u_e(1)=0,
    \qquad
    u_e(y)=u_e(1-y),
\]
so the reference flow satisfies both the no-slip boundary condition
and the prescribed midline symmetry.

An important feature of this choice is the absence of an order-one
mismatch between the leading tangential Euler velocity and the
stationary wall velocity. Nevertheless, the small-viscosity problem remains singular. For a perturbation \(\boldsymbol w=(w_1,w_2)\), linearization of the convection term around \(\boldsymbol U_e\) gives
$$ (\boldsymbol U_e\cdot\nabla)\boldsymbol w + (\boldsymbol w\cdot\nabla)\boldsymbol U_e = u_e(y)\partial_x\boldsymbol w + w_2u_e'(y)\boldsymbol e_1, \qquad \boldsymbol e_1=(1,0). $$
The coefficient of horizontal transport vanishes at both walls,
while the regularizing effect of the viscous term weakens as
$\epsilon\to0$. Thus, estimates sufficient to solve the equations
for a fixed positive viscosity do not automatically provide
the control required to justify the inviscid limit. In addition,
although the external force disappears from the formal Euler
equations, it enters at the same order as the viscous term and
must be taken into account in the construction of the corrections.\\

Our main result can be stated as follows.
\begin{theorem} \label{total theorem}
   Assume that the force profiles $F_u$ and $F_v$ satisfy the
regularity and support assumptions stated above, together with
the conditions \eqref{compatibility}. Then there exist
constants $\epsilon_0>0$ and $C>0$ such that, for every
$\epsilon\in(0,\epsilon_0]$, the Navier--Stokes system
\eqref{NS equation} with boundary condition \eqref{D-BC}
admits a solution $(u^\epsilon,v^\epsilon)$ satisfying 
   \begin{align}
    u^\epsilon(x, y)=u^\epsilon(x, 1-y),~~v^\epsilon(x, y) = -v^\epsilon(x, 1-y).\nonumber
\end{align} 
Moreover, there holds 
   \[
    \|u^{\epsilon}(x, y)-u_e(y)\|_{L^{\infty}}+\|v^{\epsilon}(x, y)\|_{L^{\infty}} \le C\epsilon,
    \]
    where $C$ is independent of $\epsilon$.
\end{theorem}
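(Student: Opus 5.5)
We will follow the standard two-step strategy: build an accurate symmetric approximate solution, then solve for the remainder with a fixed point argument. First we expand
\[
u^\epsilon = u_e + \sum_{i=1}^{N}\epsilon^{i}u_i + \epsilon^{\gamma}u_R,\qquad
v^\epsilon = \sum_{i=1}^{N}\epsilon^{i}v_i + \epsilon^{\gamma}v_R,
\]
with $\gamma>1$. Because $u_e$ already satisfies the no-slip condition and $\epsilon\Delta(u_e,0)=(-2\alpha\epsilon,0)$ can be absorbed into a constant pressure gradient, the forcing at order $\epsilon$ produces the linearized Euler problem
\[
u_e\partial_x u_1 + v_1u_e' + \partial_x p_1 = F_u - u_e'' ,\qquad u_e\partial_x v_1+\partial_y p_1=F_v,\qquad \nabla\cdot(u_1,v_1)=0 .
\]
Here the horizontal mean of the right-hand side must be handled by the pressure constant.

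We solve this problem through the stream function and the Rayleigh-type equation $u_e(\partial_x\Delta\psi_1)-u_e''\partial_x\psi_1=\mathrm{curl}\,F$, working mode by mode in $x$. The assumptions that $F$ is compactly supported away from the walls and that $\int F_u\,dx=0$ are exactly what remove the degeneracy of $u_e$ at $y=0,1$, where the inversion is singular. The midline symmetry in (1.4) guarantees that $\psi_1$ is odd about $y=\tfrac12$, so $u_1$ is even and $v_1$ is odd. The zero mode is handled separately using the mean condition.

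Since $u_1$ will not vanish at the walls, we add Prandtl-type boundary layer correctors of size $\epsilon^{1}$ in the variable $y/\sqrt\epsilon$ near both $y=0$ and $y=1$. These correctors are linear, because $u_e$ vanishes at the wall, so they are Stokes/Airy-type layers. We place them symmetrically so that each order preserves the parity. We iterate to order $N$ so that the residual $R^{app}$ satisfies $\|R^{app}\|\lesssim\epsilon^{\gamma+1}$ in $L^2$, with smallness also in the norms we need.

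Next we write the remainder equation
\[
L_\epsilon(u_R,v_R)=-\epsilon^{-\gamma}R^{app}-\epsilon^{\gamma}(u_R,v_R)\cdot\nabla(u_R,v_R),
\]
where $L_\epsilon$ is linearization about the approximate solution. We pose it in the closed subspace of fields with the symmetry $u(x,y)=u(x,1-y)$, $v(x,y)=-v(x,1-y)$; this subspace is invariant under all operators involved. The key step, and the main obstacle, is a uniform-in-$\epsilon$ a priori estimate for $L_\epsilon$. Energy gives only $\epsilon\|\nabla w\|^2\le |\int w_1w_2u_e'|+\dots$, and the term $\int w_1w_2u_e'$ is not small.

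Our plan for this estimate is a positivity estimate. We test with the multiplier $(w_1,w_2)$, and also with a quotient multiplier such as $\partial_x\psi/u_e$, in the spirit of Guo–Iyer. The aim is
\[
\epsilon\|\nabla w\|_{L^2}^2+\|\,\text{(weighted)}\;w\|^2\lesssim \|f\|^2 .
\]
The convexity $u_e''=-2\alpha<0$ (no inflection point) gives a Rayleigh-type coercive sign. We use Hardy inequalities near the walls, exploiting $u_e\sim y$ there, and use symmetry to kill the zero-mode contributions. We lose only a power $\epsilon^{-\kappa}$, which is compensated by choosing $\gamma$ large. $L^\infty$ bounds then follow from $\|w\|_\infty\lesssim\|w\|^{1/2}\|w\|_{H^2}^{1/2}$ together with Stokes regularity, at a cost of a further power of $\epsilon$.

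Finally, we run a contraction mapping in the symmetric space using these estimates, with the quadratic term carrying $\epsilon^{\gamma}$, to obtain $(u_R,v_R)$ for small $\epsilon$. The $L^\infty$ bound then reads off from the expansion. The correction $\epsilon u_1$ and the layer correctors are $O(\epsilon)$ in $L^\infty$, and the remainder term is $\epsilon^{\gamma-\kappa'}\le\epsilon$. This gives Theorem \ref{total theorem}.
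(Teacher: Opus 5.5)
Your outline has the right architecture (symmetric approximate solution, remainder posed in the symmetric subspace, contraction with a large residual exponent). There are two genuine problems.

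\textbf{The uniform linear estimate is not supplied.} You yourself call this the key step, and the mechanism you sketch is not the one that closes. Pairing $u_e\Delta\psi_x-u_e''\psi_x$ with $\partial_x\psi/u_e$ gives, up to sign, $\int|\nabla v|^2-2\int\frac{v^2}{y(1-y)}$. This is the form $B(v,v)$ the paper uses to solve for the Euler corrector. Here the curvature enters with the \emph{unfavorable} sign, because $u_e''<0<u_e$. Coercivity comes only from the Hardy inequality for functions odd about $y=\frac12$, not from convexity.

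Beyond that, you never say how $\epsilon\Delta^2\psi$ paired with $\partial_x\psi/u_e$ is handled. That pairing produces weights $u_e'/u_e^2\sim y^{-2}$ and $(1/u_e)''\sim y^{-3}$ against $\Delta\psi$, which equals $u_y\neq0$ on the walls.

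The paper avoids all of this with the multiplier $(u_x,v_x)=\nabla^\perp\partial_x\psi$:
\begin{itemize}
\item the viscous and pressure terms vanish identically;
\item transport gives $\int u^a(u_x^2+v_x^2)\ge(1-C\epsilon^{2/3})\int u_e(u_x^2+v_x^2)$;
\item the $O(1)$ curvature loss $\int u^a_y v u_x=\frac12\int u^a_{yy}v^2\approx-\alpha\int v^2$ is absorbed via $\int_0^1 f^2\,dy\le \ln 2\int_0^1 y(1-y)(f')^2\,dy$ for $f(\frac12)=0$. This is applied to $v$ together with $v_y=-u_x$, and it closes because $\ln 2+\frac{3}{100}<1$.
\end{itemize}
That is where the symmetry is indispensable. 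It does not ``kill the zero mode'': $u_0$ is even and generally nonzero, and it is invisible to the positivity estimate. It needs its own estimate: testing with $u_0$ gives $\epsilon\|u_{0,y}\|^2$, and the bad term $\int v u_e' u_0$ vanishes because $v$ has zero $x$-mean. The paper complements this with $\epsilon\int u_e\bar u_y^2$ (vorticity equation tested with $u_e\bar\phi$) and $\epsilon^2\|(u_{xx},u_{xy},v_{xx},v_{xy})\|^2$ (testing with $\epsilon(u_{xx},v_{xx})$). These are exactly the norms the contraction uses.

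\textbf{Your boundary layers have the wrong thickness.} In $Y=y/\sqrt\epsilon$, the transport term $u_e\partial_x u_b\approx\alpha\sqrt\epsilon\,Y\partial_x u_b$ is lower order than $\partial_{YY}u_b$. The leading layer equation is therefore $\partial_{YY}u_b=\partial_x p_b(x)$. This forces $\partial_x p_b=0$ and $u_b$ affine in $Y$, so no decaying layer can cancel a nonzero $x$-mode of $u_1|_{y=0,1}$.

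Balancing $\alpha\xi\partial_x u_p$ against $\partial_{\xi\xi}u_p$ forces thickness $\epsilon^{1/3}$; these are the Airy-type layers you mention. The expansion must then be carried in powers of $\epsilon^{1/3}$. The $x$-mean of the slip is removed by adding a shear $Q(y)$ to the Euler corrector. A small corrector $\epsilon^5 h$ restores $\partial_x u^a+\partial_y v^a=0$ after the cutoff. This part is repairable, but as written the approximate solution cannot be constructed.
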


\begin{remark}[Periodicity of the pressure]\label{rem:pressure}
The horizontal periodicity in Theorem~\ref{total theorem}
is imposed on the velocity field and the pressure gradient,
but not on the pressure itself. More precisely, the pressure
is understood on the covering strip $\mathbb{R}\times(0,1)$
and has the form
\[
    p^\epsilon(x,y)
    =
    \widetilde p^\epsilon(x,y)+\Pi_\epsilon x,
    \qquad
    \widetilde p^\epsilon(x+2\pi,y)
    =
    \widetilde p^\epsilon(x,y),
\]
where $\Pi_\epsilon\in\mathbb{R}$ is independent of $(x,y)$.
Consequently,
\[
    p^\epsilon(x+2\pi,y)-p^\epsilon(x,y)
    =
    2\pi\Pi_\epsilon,
    \qquad
    \nabla p^\epsilon(x+2\pi,y)
    =
    \nabla p^\epsilon(x,y).
\]
The coefficient $\Pi_\epsilon$ represents the mean horizontal
pressure gradient and is determined as part of the construction.
\end{remark}

Now, we briefly introduce the strategy and some key ideas for proving Theorem \ref{total theorem}. \\
\indent \textbf{Step 1: Construction of an approximate solution.} We construct an approximate solution $ (u^a, v^a, p^a) $ by the matched asymptotic expansions, which contain the Euler part $ (u_e^a, v_e^a, p_e^a) $, the upper boundary layer part $ (u_p^a, v_p^a, p_p^a) $ and the lower boundary layer part $ (\hat{u}_p^a, \hat{v}_p^a, \hat{p}_p^a) $. Since the leading Euler profile $u_e(y)$ vanishes linearly at
both walls, the boundary layers have thickness of order
$\epsilon^{1/3}$, and the expansion is organized in powers of
$\epsilon^{1/3}$. The upper and lower boundary layer corrections
are constructed so that the resulting approximate solution
preserves the midline symmetry.

The Euler part satisfies, for all nonnegative integers $j$ and $k$, $ j, k\in \mathbb{N} $, 
\begin{align} \label{estimate of Euler}
    \left \| \partial_x^j\partial_y^k(u_e^a-u_e(y)) \right \|_{L^{\infty}}+ \left \| \partial_x^j\partial_y^kv_e^a \right \|_{L^{\infty}}\le C_{j,k}\epsilon.
\end{align}
For the upper boundary layer part, we have, for any $ j, k, l \in \mathbb{N} $,
\begin{align} \label{estimate of upper Prandtl}
    \left \| \xi^l\partial_x^j\partial_{\xi}^ku_p^a \right \|_{L^{\infty}}\le C_{j,k,l}\epsilon,\quad \left \| \xi^l\partial_x^j\partial_{\xi}^kv_p^a \right \|_{L^{\infty}}\le C_{j,k,l}\epsilon^{\frac{4}{3}}
\end{align}
Similarly, for the lower boundary layer part, we have, for any $ j, k, l \in \mathbb{N} $,
\begin{align} \label{estimate of lower Prandtl}
    \left \| \gamma^l\partial_x^j\partial_{\gamma}^k\hat{u}_p^a \right \|_{L^{\infty}}\le C_{j,k,l}\epsilon,\quad \left \| \gamma^l\partial_x^j\partial_{\gamma}^k\hat{v}_p^a \right \|_{L^{\infty}}\le C_{j,k,l}\epsilon^{\frac{4}{3}}.
\end{align}
Here, $\xi$ and $\gamma$ denote the stretched normal variables
near the upper and lower walls, respectively, and all constants
are independent of $\epsilon$.
The construction of the approximate solution and the proofs of
these estimates are presented in Section\ref{sec2}.

\indent \textbf{Step 2: Linear stability estimate for the error equations.} We denote the error function by $ (u, v, p):=(u^{\epsilon}-u^a, v^{\epsilon}-v^a, p^{\epsilon}-p^a) $ and there holds
\begin{align} \label{error equation}
    \left \{
    \begin{aligned}
        &-\epsilon\triangle u+\partial_xp+S_u=R_u, \\
        &-\epsilon\triangle v+\partial_yp+S_v=R_v, \\
        &\partial_xu+\partial_yv=0, \\
        &u(x, y)=u(x+2\pi, y), v(x, y)= v(x+2\pi, y), \\
        &u(x, 1)=0, v(x, 1)=0, \\
        &u(x, 0)=0, v(x, 0)=0,
    \end{aligned}
    \right .
\end{align}
where 
\[
\begin{aligned}
S_u := u^au_x+v^au_y+uu_x^a+vu_y^a, ~~
S_v := u^av_x+v^av_y+uv_x^a+vv_y^a
\end{aligned}
\]
are linearized convective terms. In the linear analysis, $R_u$ and $R_v$ are treated as prescribed
source terms. In the full nonlinear error system, they contain
both the residual of the approximate solution and the quadratic
terms in the error.

Our linear stability analysis combines a positivity estimate
with two basic energy estimates. The basic energy estimates
alone do not close, because the viscous dissipation is multiplied
by the small parameter $\epsilon$. The key observation is that
the leading transport terms $u^a u_x$ and $u^a v_x$ yield
positive control of the horizontal derivatives without a
prefactor of $\epsilon$.

More precisely, the estimates for the approximate solution,
together with the matching and wall conditions, imply
\[
    \bigl(1-C\epsilon^{2/3}\bigr)u_e(y)
    \leq u^a(x,y)
    \leq (1+C\epsilon^{2/3}) u_e(y)
\]
for sufficiently small $\epsilon$.
This comparison allows us to derive an a priori estimate of
the form
\begin{align}\label{aprior estimate on error}
\begin{aligned}
    \int_\Omega
    \Big[
        &u_e(y)\bigl(u_x^2+v_x^2\bigr)
        +\epsilon
        \bigl(u_{0,y}^2+u_e(y)u_y^2\bigr)+\epsilon^2
        \bigl(
            u_{xx}^2+u_{xy}^2
            +v_{xx}^2+v_{xy}^2
        \bigr)
    \Big]\,dx\,dy
    \lesssim \mathcal Q_\epsilon,
\end{aligned}
\end{align}
where
\[
    u_0(y)
    :=
    \frac{1}{2\pi}\int_0^{2\pi}u(x,y)\,dx,
\]
and $\mathcal Q_\epsilon$ denotes the source contributions
arising in the combined estimates. The precise estimate is
established in Proposition~\ref{pro-linear stability estimate}.

We briefly explain the mechanism behind the positivity estimate. Multiplying the first and second momentum equations in
\eqref{error equation} by $u_x$ and $v_x$, respectively,
adding the resulting identities, and integrating over $\Omega$,
we obtain, using incompressibility, periodicity, and integration
by parts,
\begin{align}\label{positive0}
\begin{aligned}
    \int_\Omega u^a\bigl(u_x^2+v_x^2\bigr)\,dx\,dy
    \leq
    \left|
        \int_\Omega
        u_y^a v u_x
             \,dx\,dy
    \right|
    +\mathcal E_1.
\end{aligned}
\end{align}
Here, $\mathcal E_1$ collects the source contributions and the
remaining linear terms, which are controlled in the subsequent
estimates. A difficulty is that the left-hand side of
\eqref{positive0} does not directly control $u_y$.
Consequently, the terms involving $u$ and $u_y$ must be
estimated carefully.

The pointwise comparison between $u^a$ and $u_e$ gives
\[
    \int_\Omega u^a\bigl(u_x^2+v_x^2\bigr)\,dx\,dy
    \geq
    \bigl(1-C\epsilon^{2/3}\bigr)
    \int_\Omega u_e(y)\bigl(u_x^2+v_x^2\bigr)\,dx\,dy.
\]
On the other hand, using
\eqref{estimate of Euler},
\eqref{estimate of upper Prandtl}, 
\eqref{estimate of lower Prandtl}, and the incompressibility condition,  we obtain
\begin{align}
\Big|\int_{\Omega}u_y^avu_x\mathrm{d}x\mathrm{d}y\Big|=&\frac{1}{2}\Big|\int_{\Omega}u_{yy}^av^2\mathrm{d}x\mathrm{d}y\Big|\nonumber\\
\leq& \frac{1}{2}\int_{\Omega}|(u^a-u_e(y))_{yy}v^2\mathrm{d}x\mathrm{d}y+\frac{1}{2}\int_{\Omega}|u_e''(y)|v^2\mathrm{d}x\mathrm{d}y\nonumber\\
\leq& C\epsilon^{\frac13}\int_{\Omega} v^2\mathrm{d}x\mathrm{d}y+\int_{\Omega}\alpha v^2\mathrm{d}x\mathrm{d}y.\nonumber
\end{align}
For the Couette profile $u_e(y)=\alpha y$ in \cite{FPZ}, there holds $u_e''(y)=0$,
so the curvature term vanishes. In contrast, the Poiseuille
profile $u_e(y)=\alpha y(1-y)$ has nonzero curvature,
$u_e''(y)=-2\alpha$, which gives rise to the additional contribution
\[
    \frac12\int_\Omega |u_e''(y)|v^2\,dx\,dy
    =
    \alpha\int_\Omega v^2\,dx\,dy.
\]
The odd symmetry of $v$ with respect to the midline $y=\frac12$
is essential for controlling this term through a weighted
Poincar\'e inequality, thereby allowing it to be absorbed into
the positive part of the estimate.
Indeed, since
\[
    v(x,y)=-v(x,1-y),
\]
the following weighted Poincar\'e inequality holds:
\[
    \alpha\int_\Omega v^2\,dx\,dy
    \leq
    \ln 2\int_\Omega u_e(y)v_y^2\,dx\,dy.
\]
Using the incompressibility condition $v_y=-u_x$, we deduce
\[
    \alpha\int_\Omega v^2\,dx\,dy
    \leq
    \ln 2\int_\Omega u_e(y)u_x^2\,dx\,dy.
\]
Since
\[
    \frac{3}{100}+\ln 2<1,
\]
this contribution can be absorbed into the left-hand side of
the positivity estimate for sufficiently small $\epsilon$.
The additional term
$C\epsilon^{1/3}\int_\Omega v^2\,dx\,dy$
is absorbed in the same way, while the term involving
$u_{0,y}$ is controlled by the basic energy estimates. Combining these estimates yields
\eqref{aprior estimate on error}.
The detailed argument is given in
Proposition~\ref{pro-linear stability estimate}.\\
\indent \textbf{Step 3: Existence of a solution to the error equations.} After establishing solvability of the linearized problem,
we construct a solution to the full nonlinear error system
by a contraction mapping argument.

To distinguish the approximation residual from the nonlinear
error terms, write
\[
    \boldsymbol U^a=(u^a,v^a),
    \qquad
    \boldsymbol F=(F_u,F_v),
\]
and define
\[
    \boldsymbol R^a
    :=
    (\boldsymbol U^a\cdot\nabla)\boldsymbol U^a
    +\nabla p^a
    -\epsilon\Delta\boldsymbol U^a
    -\epsilon\boldsymbol F.
\]
Then the right-hand side of the nonlinear error system is
\[
    (R_u,R_v)
    =
    -\boldsymbol R^a
    -
    \bigl(
        u u_x+v u_y,\,
        u v_x+v v_y
    \bigr).
\]

We define an iteration map by evaluating the quadratic terms
at a given error field and solving the resulting linearized
system. The a priori estimate
\eqref{aprior estimate on error}, together with the bounds
on the approximation residual and the nonlinear terms,
shows that this map sends a suitably chosen closed ball
in the midline-symmetric function space into itself and is
a contraction for sufficiently small $\epsilon$.
Its fixed point provides a symmetric solution to the nonlinear
error equations. Adding this error to the approximate solution
then yields the desired steady Navier--Stokes solution. \\

\indent Our paper is organized as follows. In section \ref{sec2}, we construct an approximate solution by the matched asymptotic expansions. In section \ref{sec3}, based on the approximate solution, we obtain the error equations, establish the linear stability estimate which consists of the positive estimate and basic energy estimates. In section \ref{sec4}, we prove the existence of a solution to the error equations by employing the Sobolev embedding and the contraction mapping principle.

\section{Construction of approximate solutions}\label{sec2}
In this section, we construct an approximate solution of the Navier-Stokes equations (\ref{NS equation}) with boundary condition (\ref{D-BC}) by the matched asymptotic expansions. 
\subsection{Euler expansions} Away from the boundary, we make the following formal expansions
\begin{align} \label{Euelr expansion}
\begin{aligned}
    u^{\epsilon }(x, y) &=u_e(y)+\sum_{k=1}^{k_0} \epsilon ^{\frac{k}{3}}u_{e}^{(\frac{k}{3})}(x, y) + h.o.t, \\
    v^{\epsilon }(x, y) &=\sum_{k=1}^{k_0} \epsilon ^{\frac{k}{3}}v_{e}^{(\frac{k}{3})}(x, y) + h.o.t, \\
    p^{\epsilon }(x, y) &=\sum_{k=1}^{k_0} \epsilon ^{\frac{k}{3}}p_{e}^{(\frac{k}{3})}(x, y) + h.o.t.
\end{aligned}
\end{align}
 Here and in what follows, ``h.o.t" means higher order terms.
\subsubsection{Equation for $ (u_e^{(\frac{1}{3})}, v_e^{(\frac{1}{3})}, p_e^{(\frac{1}{3})}) $} By substituting the Euler expansions (\ref{Euelr expansion}) into (\ref{NS equation}) and collecting the $\epsilon^{\frac{1}{3}}$ order terms, we have the following linearized steady Euler equations for $ (u_e^{(\frac{1}{3})}, v_e^{(\frac{1}{3})}, p_e^{(\frac{1}{3})}) $
\begin{align} \label{eque1}
\left \{
\begin{aligned}
      &u_e(y)\partial_xu_e^{(\frac{1}{3})}+u'_e(y)v_e^{(\frac{1}{3})}+\partial_xp_e^{(\frac{1}{3})}=0, \\
    &u_e(y)\partial_xv_e^{(\frac{1}{3})}+\partial_yp_e^{(\frac{1}{3})}=0, \\
    &\partial_xu_e^{(\frac{1}{3})}+\partial_yv_e^{(\frac{1}{3})}=0, \\  
    &(u_e^{(\frac{1}{3})}, v_e^{(\frac{1}{3})})(x, y)=(u_e^{(\frac{1}{3})}, v_e^{(\frac{1}{3})})(x+2\pi, y), \\
    &v_e^{(\frac{1}{3})}|_{y=1}=-v_p^{(\frac{1}{3})}|_{\xi =0},~~ v_e^{(\frac{1}{3})}|_{y=0}=-\hat{v}_p^{(\frac{1}{3})}|_{\gamma  =0},
\end{aligned}
\right .\
\end{align}
where $(v_p^{(\frac{1}{3})}, \hat{v}_p^{(\frac{1}{3})})$ is defined in (\ref{equp0}) and (\ref{eqhup0}) respectively in next subsection. In fact, we will take $(u_e^{(\frac{1}{3})}, v_e^{(\frac{1}{3})}, p_e^{(\frac{1}{3})})=0$.
\subsubsection{Equation for $ (u_e^{(\frac{2}{3})}, v_e^{(\frac{2}{3})}, p_e^{(\frac{2}{3})}) $} By substituting the Euler expansions (\ref{Euelr expansion}) into (\ref{NS equation}) and collecting the $\epsilon^{\frac{2}{3}}$ order terms, we have the following linearized steady Euler equations for $ (u_e^{(\frac{2}{3})}, v_e^{(\frac{2}{3})}, p_e^{(\frac{2}{3})}) $
\begin{align} \label{eque2}
\left \{
\begin{aligned}
      &u_e(y)\partial_xu_e^{(\frac{2}{3})}+u'_e(y)v_e^{(\frac{2}{3})}+\partial_xp_e^{(\frac{2}{3})}=0, \\
    &u_e(y)\partial_xv_e^{(\frac{2}{3})}+\partial_yp_e^{(\frac{2}{3})}=0, \\
    &\partial_xu_e^{(\frac{2}{3})}+\partial_yv_e^{(\frac{2}{3})}=0, \\  
    &(u_e^{(\frac{2}{3})}, v_e^{(\frac{2}{3})})(x, y)=(u_e^{(\frac{2}{3})}, v_e^{(\frac{2}{3})})(x+2\pi, y), \\
    &v_e^{(\frac{2}{3})}|_{y=1}=-v_p^{(\frac{2}{3})}|_{\xi =0},~~ v_e^{(\frac{2}{3})}|_{y=0}=-\hat{v}_p^{(\frac{2}{3})}|_{\gamma  =0},
\end{aligned}
\right .\
\end{align}
where $(v_p^{(\frac{2}{3})}, \hat{v}_p^{(\frac{2}{3})})$ is defined in (\ref{equp1}) and (\ref{eqhup1}) respectively in next subsection. In fact, we will take $(u_e^{(\frac{2}{3})}, v_e^{(\frac{2}{3})}, p_e^{(\frac{2}{3})})=0$.
\subsubsection{Equation for $ (u_e^{(1)}, v_e^{(1)}, p_e^{(1)}) $} By substituting the Euler expansions (\ref{Euelr expansion}) into (\ref{NS equation}) and collecting the $\epsilon^{1}$ order terms, we have the following linearized steady Euler equations for $ (u_e^{(1)}, v_e^{(1)}, p_e^{(1)}) $
\begin{align} \label{eque3}
\left \{
\begin{aligned}
      &u_e(y)\partial_xu_e^{(1)}+u'_e(y)v_e^{(1)}+\partial_xp_e^{(1)}-u_e''(y)=F_u, \\
    &u_e(y)\partial_xv_e^{(1)}+\partial_yp_e^{(1)}=F_v, \\
    &\partial_xu_e^{(1)}+\partial_yv_e^{(1)}=0, \\  
    &(u_e^{(1)}, v_e^{(1)})(x, y)=(u_e^{(1)}, v_e^{(1)})(x+2\pi, y), \\
    &v_e^{(1)}|_{y=1}=-v_p^{(1)}|_{\xi =0},~~ v_e^{(1)}|_{y=0}=-\hat{v}_p^{(1)}|_{\gamma  =0},
\end{aligned}
\right .\
\end{align}
where $(v_p^{(1)}, \hat{v}_p^{(1)})$ is defined in (\ref{equp2}) and (\ref{eqhup2}) respectively in next subsection. \\

\subsection{Prandtl expansions near y = 1} Define the upper boundary layer variable $ \xi = \frac{y-1}{\epsilon^{\frac{1}{3}}} $. We make the following Prandtl expansions near $ y = 1$

\begin{align} \label{upper boundary layer expansion}
\begin{aligned}
    u^{\epsilon }(x, y) &=u_e(y)+u_p^{(0)}(x,\xi)+\sum_{k=1}^{k_0} \epsilon ^{\frac{k}{3}}\big(u_{e}^{(\frac{k}{3})}(x, y) + u_p^{(\frac{k}{3})}(x, \xi) \big) + h.o.t, \\
    v^{\epsilon }(x, y) &=\sum_{k=1}^{k_0} \epsilon ^{\frac{k}{3}}\big(v_{e}^{(\frac{k}{3})}(x, y)+v_p^{(\frac{k}{3})}(x, \xi) \big) + h.o.t, \\
    p^{\epsilon }(x, y) &=\sum_{k=1}^{k_0} \epsilon ^{\frac{k}{3}}\big(p_{e}^{(\frac{k}{3})}(x, y)+p_p^{(\frac{k}{3})}(x, \xi)\big) + h.o.t.
\end{aligned}
\end{align}
where as $\xi\rightarrow -\infty$
\begin{align}
  \partial_\theta^l\partial_\xi^mv_p^{(i)}(\theta,\xi)\rightarrow 0, \ \partial_\theta^l\partial_\xi^mp_p^{(i)}(\theta,\xi)\rightarrow 0,\label{matching condition}
\end{align}
here $l,m\geq 0, i=0,1,\cdots,$ and the boundary condition is matched by
\begin{align} \label{A-BC1}
    \begin{aligned}
        u_e^{(\frac{k}{3})}|_{y=1}+u_p^{(\frac{k}{3})}|_{\xi=0} = 0,~~ 
        v_e^{(\frac{k}{3})}|_{y=1}+v_p^{(\frac{k}{3})}|_{\xi=0} = 0,~~ 0 \le k \in \mathbb{N}. 
    \end{aligned}
\end{align}
\subsubsection{Equation for $ (u_p^{(0)}, v_p^{(\frac{1}{3})},p_p^{(\frac{1}{3})}) $} By substituting the upper boundary layer expansion (\ref{upper boundary layer expansion}) into (\ref{NS equation}) and collecting the $ \epsilon^0 $ order terms, we have the following steady boundary layer equations for $ (u_p^{(0)}, v_p^{(\frac{1}{3})},p_p^{(\frac{1}{3})}) $

\begin{align} \label{equp0}
\left \{
    \begin{aligned}
        &u_p^{(0)}\partial_xu_p^{(0)}+(v_p^{(\frac{1}{3})}-v_p^{(\frac{1}{3})}(x, 0))\partial_{\xi}u_p^{(0)} =0, \\
        &\partial_x u_p^{(0)}+ \partial_{\xi}v_p^{(\frac{1}{3})}=0, \\
        &\partial_{\xi}p_p^{(\frac{1}{3})}=0, \\
        &(u_p^{(0)}, v_p^{(\frac{1}{3})})(x, \xi) = (u_p^{(0)}, v_p^{(\frac{1}{3})})(x+2\pi, \xi), \\
        &u_p^{(0)}|_{\xi = 0} = 0, \\
        &\lim\limits_{\xi \to -\infty} (v_p^{(\frac{1}{3})}, p_p^{(\frac{1}{3})}) = (0,0).
    \end{aligned}
\right .
\end{align}
Here we take $ (u_p^{(0)}, v_p^{(\frac{1}{3})},p_p^{(\frac{1}{3})}) = (0,0,0)$.

\subsubsection{Equation for $ (u_p^{(\frac{1}{3})}, v_p^{(\frac{2}{3})},p_p^{(\frac{2}{3})}) $} By substituting the upper boundary layer expansion (\ref{upper boundary layer expansion}) into (\ref{NS equation}) and collecting the $ \epsilon^{\frac{1}{3}}/\epsilon^{\frac{2}{3}} $ order terms, we have the following steady boundary layer equations for $ (u_p^{(\frac{1}{3})}, v_p^{(\frac{2}{3})}, p_p^{(\frac{2}{3})}) $
\begin{align}\label{equp1}
    \left \{
    \begin{aligned}
        &(-\alpha\xi+u_p^{(\frac{1}{3})})\partial_xu_p^{(\frac{1}{3})}-\alpha v_p^{(\frac{2}{3})}+(v_p^{(\frac{2}{3})}-v_p^{(\frac{2}{3})}(x, 0))\partial_{\xi}u_p^{(\frac{1}{3})}+\partial_xp_p^{(\frac{2}{3})}-\partial_{\xi \xi}u_p^{(\frac{1}{3})}=0, \\
        &\partial_x u_p^{(\frac{1}{3})}+ \partial_{\xi}v_p^{(\frac{2}{3})}=0, \\
        &\partial_{\xi}p_p^{(\frac{2}{3})}=0, \\
        &(u_p^{(\frac{1}{3})}, v_p^{(\frac{2}{3})})(x, \xi) = (u_p^{(\frac{1}{3})}, v_p^{(\frac{2}{3})})(x+2\pi, \xi), \\
        &u_p^{(\frac{1}{3})}|_{\xi = 0} = 0, \\
        &\lim\limits_{\xi \to -\infty} (v_p^{(\frac{2}{3})}, p_p^{(\frac{2}{3})}) = (0,0).
    \end{aligned}
    \right .
\end{align}
Similarly, we take $ (u_p^{(\frac{1}{3})}, v_p^{(\frac{2}{3})},p_p^{(\frac{2}{3})}) =(0,0,0) $.

\subsubsection{Equation for $ (u_p^{(\frac{2}{3})}, v_p^{(1)},p_p^{(1)}) $} By substituting the upper boundary layer expansion (\ref{upper boundary layer expansion}) into (\ref{NS equation}) and collecting the $ \epsilon^{\frac{2}{3}}/\epsilon^1 $ order terms, we have the following steady boundary layer equations for $ (u_p^{(\frac{2}{3})}, v_p^{(1)}, p_p^{(1)}) $
\begin{align}\label{equp2}
    \left \{
    \begin{aligned}
        &-\alpha \xi \partial_x u_p^{(\frac{2}{3})}-\alpha v_p^{(1)}-\partial_{\xi \xi} u_p^{(\frac{2}{3})}+\partial_xp_p^{(1)}=0, \\
        &\partial_x u_p^{(\frac{2}{3})}+ \partial_{\xi}v_p^{(1)}=0, \\
        &\partial_{\xi}p_p^{(1)}=0, \\
        &(u_p^{(\frac{2}{3})}, v_p^{(1)})(x, \xi) = (u_p^{(\frac{2}{3})}, v_p^{(1)})(x+2\pi, \xi), \\
        &u_p^{(\frac{2}{3})}|_{\xi = 0} = 0, \\
        &\lim\limits_{\xi \to -\infty} (v_p^{(1)}, p_p^{(1)}) = (0,0).
    \end{aligned}
    \right .
\end{align}
Similarly, we take $ (u_p^{(\frac{2}{3})}, v_p^{(1)},p_p^{(1)}) =(0,0,0)$.
\subsubsection{Equation for $ (u_p^{(1)}, v_p^{(\frac{4}{3})},p_p^{(\frac{4}{3})}) $} By substituting the upper boundary layer expansion (\ref{upper boundary layer expansion}) into (\ref{NS equation}) and collecting the $ \epsilon^{1}/\epsilon^{\frac{4}{3}} $ order terms, we have the following steady boundary layer equations for $ (u_p^{(1)}, v_p^{(\frac{4}{3})}, p_p^{(\frac{4}{3})}) $
\begin{align}\label{equp3}
    \left \{
    \begin{aligned}
        &-\alpha \xi \partial_x u_p^{(1)}-\alpha v_p^{(\frac{4}{3})}+\partial_xp_p^{(\frac{4}{3})}-\partial_{\xi \xi} u_p^{(1)}=0, \\
        &\partial_x u_p^{(1)}+ \partial_{\xi}v_p^{(\frac{4}{3})}=0, \\
        &\partial_{\xi}p_p^{(\frac{4}{3})}=0, \\
        &(u_p^{(1)}, v_p^{(\frac{4}{3})})(x, \xi) = (u_p^{(1)}, v_p^{(\frac{4}{3})})(x+2\pi, \xi), \\
        &u_p^{(1)}|_{\xi = 0} = -u_e^{(1)}|_{y=1}, \\
        &\lim\limits_{\xi \to -\infty} (v_p^{(\frac{4}{3})}, p_p^{(\frac{4}{3})}) =(0,0).
    \end{aligned}
    \right .
\end{align}

\subsection{Prandtl expansions near y = 0} Similarly as above, we define the lower boundary layer variable $\gamma = \frac{y}{\epsilon^{\frac{1}{3}}}$ and make the following Prandtl expansions near y = 0
\begin{align} \label{lower boundary layer expansion}
\begin{aligned}
    u^{\epsilon }(x, y) &=u_e(y)+\hat{u}_p^{(0)}(x,\gamma)+\sum_{k=1}^{k_0} \epsilon ^{\frac{k}{3}}\big(u_{e}^{(\frac{k}{3})}(x, y) + \hat{u}_p^{(\frac{k}{3})}(x, \gamma) \big) + h.o.t, \\
    v^{\epsilon }(x, y) &=\sum_{k=1}^{k_0} \epsilon ^{\frac{k}{3}}\big(v_{e}^{(\frac{k}{3})}(x, y)+\hat{v}_p^{(\frac{k}{3})}(x, \gamma) \big) + h.o.t, \\
    p^{\epsilon }(x, y) &=\sum_{k=1}^{k_0} \epsilon ^{\frac{k}{3}}\big(p_{e}^{(\frac{k}{3})}(x, y)+\hat{p}_p^{(\frac{k}{3})}(x, \gamma)\big) + h.o.t.
\end{aligned}
\end{align}
where as $\gamma\rightarrow +\infty$
\begin{align}
  \partial_\theta^l\partial_\gamma^m\hat{v}_p^{(i)}(\theta,\gamma)\rightarrow 0, \ \partial_\theta^l\partial_\gamma^m\hat{p}_p^{(i)}(\theta,\gamma)\rightarrow 0,\label{matching condition}
\end{align}
here $l,m\geq 0, i=0,1,\cdots,$ and the boundary condition is matched by
\begin{align} \label{A-BC2}
    \begin{aligned}
        u_e^{(\frac{k}{3})}|_{y=0}+\hat{u}_p^{(\frac{k}{3})}|_{\gamma=0} = 0,~~v_e^{(\frac{k}{3})}|_{y=0}+\hat{v}_p^{(\frac{k}{3})}|_{\gamma=0} = 0,~~ 0 \le k \in \mathbb{N}. 
    \end{aligned}
\end{align}

\subsubsection{Equation for $ (\hat{u}_p^{(0)}, \hat{v}_p^{(\frac{1}{3})},\hat{p}_p^{(\frac{1}{3})}) $} By substituting the lower boundary layer expansion (\ref{lower boundary layer expansion}) into (\ref{NS equation}) and collecting the $ \epsilon^0 $ order terms, we have the following steady boundary layer equations for $ (\hat{u}_p^{(0)}, \hat{v}_p^{(\frac{1}{3})},\hat{p}_p^{(\frac{1}{3})}) $

\begin{align} \label{eqhup0}
\left \{
    \begin{aligned}
        &\hat{u}_p^{(0)}\partial_x\hat{u}_p^{(0)}+(\hat{v}_p^{(\frac{1}{3})}-\hat{v}_p^{(\frac{1}{3})}(x, 0))\partial_{\gamma}\hat{u}_p^{(0)} =0, \\
        &\partial_x \hat{u}_p^{(0)}+ \partial_{\gamma}\hat{v}_p^{(\frac{1}{3})}=0, \\
        &\partial_{\gamma}\hat{p}_p^{(\frac{1}{3})}=0, \\
        &(\hat{u}_p^{(0)}, \hat{v}_p^{(\frac{1}{3})})(x, \gamma) = (\hat{u}_p^{(0)}, \hat{v}_p^{(\frac{1}{3})})(x+2\pi, \gamma), \\
        &\hat{u}_p^{(0)}|_{\gamma = 0} = 0, \\
        &\lim\limits_{\gamma \to +\infty} (\hat{v}_p^{(\frac{1}{3})}, \hat{p}_p^{(\frac{1}{3})}) = (0,0).
    \end{aligned}
\right .
\end{align}
Here we take $ (\hat{u}_p^{(0)}, \hat{v}_p^{(\frac{1}{3})},\hat{p}_p^{(\frac{1}{3})}) =(0,0,0)$.

\subsubsection{Equation for $ (\hat{u}_p^{(\frac{1}{3})}, \hat{v}_p^{(\frac{2}{3})},\hat{p}_p^{(\frac{2}{3})}) $} By substituting the lower boundary layer expansion (\ref{lower boundary layer expansion}) into (\ref{NS equation}) and collecting the $ \epsilon^{\frac{1}{3}}/\epsilon^{\frac{2}{3}} $ order terms, we have the following steady boundary layer equations for $ (\hat{u}_p^{(\frac{1}{3})}, \hat{v}_p^{(\frac{2}{3})}, \hat{p}_p^{(\frac{2}{3})}) $
\begin{align}\label{eqhup1}
    \left \{
    \begin{aligned}
        &(\alpha\gamma+\hat{u}_p^{(\frac{1}{3})})\partial_x\hat{u}_p^{(\frac{1}{3})}+\alpha \hat{v}_p^{(\frac{2}{3})}+(\hat{v}_p^{(\frac{2}{3})}-\hat{v}_p^{(\frac{2}{3})}(x,0))\partial_{\gamma}\hat{u}_p^{(\frac{1}{3})}+\partial_xp_p^{(\frac{2}{3})}-\partial_{\gamma \gamma}\hat{u}_p^{(\frac{1}{3})}=0, \\
        &\partial_x \hat{u}_p^{(\frac{1}{3})}+ \partial_{\gamma}\hat{v}_p^{(\frac{2}{3})}=0, \\
        &\partial_{\gamma}\hat{p}_p^{(\frac{2}{3})}=0, \\
        &(\hat{u}_p^{(\frac{1}{3})}, \hat{v}_p^{(\frac{2}{3})})(x, \gamma) = (\hat{u}_p^{(\frac{1}{3})}, \hat{v}_p^{(\frac{2}{3})})(x+2\pi, \gamma), \\
        &\hat{u}_p^{(\frac{1}{3})}|_{\gamma = 0} = 0, \\
        &\lim\limits_{\gamma \to +\infty} ( \hat{v}_p^{(\frac{2}{3})}, \hat{p}_p^{(\frac{2}{3})}) = (0,0).
    \end{aligned}
    \right .
\end{align}
Similarly, we take $ (\hat{u}_p^{(\frac{1}{3})}, \hat{v}_p^{(\frac{2}{3})},\hat{p}_p^{(\frac{2}{3})}) =(0,0,0) $.

\subsubsection{Equation for $ (\hat{u}_p^{(\frac{2}{3})}, \hat{v}_p^{(1)},\hat{p}_p^{(1)}) $} By substituting the lower boundary layer expansion (\ref{lower boundary layer expansion}) into (\ref{NS equation}) and collecting the $ \epsilon^{\frac{2}{3}}/\epsilon^1 $ order terms, we have the following steady boundary layer equations for $ (\hat{u}_p^{(\frac{2}{3})}, \hat{v}_p^{(1)}, \hat{p}_p^{(1)}) $
\begin{align}\label{eqhup2}
    \left \{
    \begin{aligned}
        &\alpha \gamma \partial_x \hat{u}_p^{(\frac{2}{3})}+\alpha \hat{v}_p^{(1)}-\partial_{\gamma \gamma} \hat{u}_p^{(\frac{2}{3})}+\partial_x\hat{p}_p^{(1)}=0, \\
        &\partial_x \hat{u}_p^{(\frac{2}{3})}+ \partial_{\gamma}\hat{v}_p^{(1)}=0, \\
        &\partial_{\gamma}\hat{p}_p^{(1)}=0, \\
        &(\hat{u}_p^{(\frac{2}{3})}, \hat{v}_p^{(1)})(x, \gamma) = (\hat{u}_p^{(\frac{2}{3})}, \hat{v}_p^{(1)})(x+2\pi, \gamma), \\
        &\hat{u}_p^{(\frac{2}{3})}|_{\gamma = 0} = 0, \\
        &\lim\limits_{\gamma \to +\infty} (\hat{v}_p^{(1)}, \hat{p}_p^{(1)}) = (0,0).
    \end{aligned}
    \right .
\end{align}
Similarly, we take $ (\hat{u}_p^{(\frac{2}{3})}, \hat{v}_p^{(1)},\hat{p}_p^{(1)}) =(0,0,0)$.
\subsubsection{Equation for $ (\hat{u}_p^{(1)}, \hat{v}_p^{(\frac{4}{3})},\hat{p}_p^{(\frac{4}{3})}) $} By substituting the lower boundary layer expansion (\ref{lower boundary layer expansion}) into (\ref{NS equation}) and collecting the $ \epsilon^{1}/\epsilon^{\frac{4}{3}} $ order terms, we have the following steady boundary layer equations for $ (\hat{u}_p^{(1)}, \hat{v}_p^{(\frac{4}{3})}, \hat{p}_p^{(\frac{4}{3})}) $
\begin{align}\label{eqhup3}
    \left \{
    \begin{aligned}
        &\alpha \gamma \partial_x \hat{u}_p^{(1)}+\alpha \hat{v}_p^{(\frac{4}{3})}+\partial_x\hat{p}_p^{(\frac{4}{3})}-\partial_{\gamma \gamma} \hat{u}_p^{(1)}=0, \\
        &\partial_x \hat{u}_p^{(1)}+ \partial_{\gamma}\hat{v}_p^{(\frac{4}{3})}=0, \\
        &\partial_{\gamma}\hat{p}_p^{(\frac{4}{3})}=0, \\
        &(\hat{u}_p^{(1)}, \hat{v}_p^{(\frac{4}{3})})(x, \gamma) = (\hat{u}_p^{(1)}, \hat{v}_p^{(\frac{4}{3})})(x+2\pi, \gamma), \\
        &\hat{u}_p^{(1)}|_{\gamma = 0} = -u_e^{(1)}|_{y=0}, \\
        &\lim\limits_{\gamma \to +\infty} ( \hat{v}_p^{(\frac{4}{3})}, \hat{p}_p^{(\frac{4}{3})}) = (0,0).
    \end{aligned}
    \right .
\end{align}

\begin{remark}
Let us now explain the reason why we choose the thickness of the boundary layer to be $\epsilon^{\frac{1}{3}}$. In fact, we let
\[
\begin{aligned}
&u^{\epsilon}=u_e(y)+\epsilon^{\beta}(u_e^{(\beta)}+u_p^{(\beta)}), \\
&v^{\epsilon}=\epsilon^{\beta}(v_e^{(\beta)}+v_p^{(\beta)}).
\end{aligned}
\]
Then we take it into (\ref{NS equation}), we can see
\[
-\epsilon^{2\beta}\alpha \xi \partial_xu_p^{(\beta)}-\epsilon^{1-\beta}\partial_{\xi \xi}u_p^{(\beta)}=0,
\]
so to match the order, we need $2\beta = 1-\beta$, i.e. $\beta = \frac{1}{3}$. 
\end{remark}

\subsection{Solvability of Euler equations and boundary layer equations} The order in which we solve the equation is as follows

\begin{tikzpicture}
    \node (X) at (2,0) {$(u_e(y),0)$};
    \node (A) at (4,1.5) {$(u_p^{(0)}, v_p^{(\frac{1}{3})})$};
    \node (B) at (4,-1.5) {$(\hat{u}_p^{(0)}, \hat{v}_p^{(\frac{1}{3})})$};
    \node (C) at (6,0) {$(u_e^{(\frac{1}{3})},v_e^{(\frac{1}{3})})$};
    \node (D) at (8,1.5) {$(u_p^{(\frac{1}{3})}, v_p^{(\frac{2}{3})})$};
    \node (E) at (8,-1.5) {$(\hat{u}_p^{(\frac{1}{3})}, \hat{v}_p^{(\frac{2}{3})})$};
    \node (F) at (10,0) {$(u_e^{(\frac{2}{3})},v_e^{(\frac{2}{3})})$};
    \node (G) at (12,1.5) {$(u_p^{(\frac{2}{3})}, v_p^{(1)})$};
    \node (H) at (12,-1.5) {$(\hat{u}_p^{(\frac{2}{3})}, \hat{v}_p^{(1)})$};
    \node (I) at (14,0) {$(u_e^{(1)},v_e^{(1)})$};
    \node (J) at (16,1.5) {$(u_p^{(1)}, v_p^{(\frac{4}{3})})$};
    \node (K) at (16,-1.5) {$(\hat{u}_p^{(1)}, \hat{v}_p^{(\frac{4}{3})})$};
    \node (L) at (18,1.5) {$\dots \dots$};
    \node (M) at (18,-1.5) {$\dots \dots$};
    \draw[->] (X) -- node[above] {$upper \quad boundary$} (A);
    \draw[->] (X) -- node[above] {$lower \quad boundary$} (B);
    \draw[->] (A) -- node[above] {} (C);
    \draw[->] (B) -- node[above] {} (C);
    \draw[->] (C) -- node[above] {} (D);
    \draw[->] (C) -- node[above] {} (E);
    \draw[->] (D) -- node[above] {} (F);
    \draw[->] (E) -- node[above] {} (F);
    \draw[->] (F) -- node[above] {} (G);
    \draw[->] (F) -- node[above] {} (H);
    \draw[->] (G) -- node[above] {} (I);
    \draw[->] (H) -- node[above] {} (I);
    \draw[->] (I) -- node[above] {} (J);
    \draw[->] (I) -- node[above] {} (K);

\end{tikzpicture}
We give some explanations for this picture.
\begin{itemize}
    \item We use $(u_e, v_e)$ denote the Euler asymptotic expansion away from the boundary, while $(u_p, v_p)$ to denote the asymptotic expansion near the upper boundary and $(\hat{u}_p, \hat{v}_p)$ to denote the asymptotic expansion near the lower boundary. 
    \item The superscript on the Euler and the boundary layer asymptotic expansion represents the matching order of $\epsilon$.
    \item The equations satisfied by $(u_e, v_e)$, $(u_p, v_p)$ and $(\hat{u}_p, \hat{v}_p)$ will be obtained by solving (\ref{NS equation}) by matching the order of $\epsilon$. The asymptotic expansion will be solved column by column starting from $(u_e(y), 0):=(\alpha y(1-y), 0)$.
\end{itemize}

\subsubsection{The solvability of the Euler equations for $(u_e^{(1)}, v_e^{(1)})$.}

Fisrtly, we present two types of the Hardy inequalities which will be used frequently.

\begin{lemma} 
    For function $f(y) \in C^1([0,1])$ with $f(0)=f(1)=0$, we have
    \begin{align} \label{hardy2}
        \int_{0}^{1}\frac{f^2(y)}{y^2} \mathrm{d}y \leq 4 \int_{0}^{1}(f'(y))^2 \mathrm{d}y, ~~\int_{0}^{1}\frac{f^2(y)}{(1-y)^2} \mathrm{d}y \leq 4 \int_{0}^{1}(f'(y))^2 \mathrm{d}y.
    \end{align}
\end{lemma}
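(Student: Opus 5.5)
The plan is to prove the first inequality in \eqref{hardy2} by the standard one-dimensional Hardy argument: integrate by parts against the weight $y^{-2}$ on $[\delta,1]$, then let $\delta\to0$. The second inequality will follow from the first by the reflection $y\mapsto 1-y$. Only $f(0)=0$ is needed for the first bound and only $f(1)=0$ for the second; the hypothesis $f\in C^1$ controls the singular weight near the relevant endpoint.

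\textbf{Step 1 (behaviour near $y=0$).} Since $f(0)=0$ and $f\in C^1([0,1])$, we have $|f(y)|\le \|f'\|_{L^\infty}\,y$. Hence $f^2/y^2$ is bounded and integrable on $(0,1]$, and $f(\delta)^2/\delta\to0$ as $\delta\to0$.

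\textbf{Step 2 (integration by parts).} On $[\delta,1]$ write $y^{-2}=-\frac{d}{dy}(y^{-1})$. Then
\[
\int_\delta^1\frac{f^2}{y^2}\,dy=\Big[-\frac{f^2}{y}\Big]_\delta^1+\int_\delta^1\frac{2ff'}{y}\,dy
=-f(1)^2+\frac{f(\delta)^2}{\delta}+2\int_\delta^1\frac{f}{y}f'\,dy .
\]
Drop the nonpositive term $-f(1)^2$, which is zero here anyway. Apply Cauchy--Schwarz to the last integral to get
\[
I_\delta\le \frac{f(\delta)^2}{\delta}+2\,I_\delta^{1/2}\Big(\int_0^1 (f')^2dy\Big)^{1/2},
\qquad I_\delta:=\int_\delta^1\frac{f^2}{y^2}\,dy .
\]

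\textbf{Step 3 (limit).} Let $\delta\to0$ using Step 1. With $I:=\lim I_\delta<\infty$ this gives $I\le 2I^{1/2}\|f'\|_{L^2}$. If $I>0$, divide by $I^{1/2}$ and square to obtain $I\le 4\|f'\|_{L^2}^2$; if $I=0$ the bound is trivial.

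\textbf{Step 4 (second inequality).} Apply Steps 1--3 to $g(y)=f(1-y)$, which satisfies $g(0)=f(1)=0$. Changing variables gives $\int_0^1 g^2/y^2\,dy=\int_0^1 f^2/(1-y)^2\,dy$ and $\int_0^1 (g')^2\,dy=\int_0^1 (f')^2\,dy$.

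There is no serious obstacle. The only delicate point is justifying the boundary term at $0$ and the finiteness of $I_\delta$ before dividing, which Step 1 handles using the $C^1$ regularity.
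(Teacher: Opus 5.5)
Your proof is correct and complete. Step 1 uses $f\in C^1([0,1])$ and $f(0)=0$ to get $|f(y)|\le \|f'\|_{L^\infty}\,y$; this makes $I<\infty$ and sends $f(\delta)^2/\delta\to0$, so dividing by $I^{1/2}$ is justified. The reflection $g(y)=f(1-y)$ then correctly gives the second bound.

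The paper gives no proof of this lemma and treats it as the classical Hardy inequality, so there is nothing to compare against. Your integration by parts on $[\delta,1]$, followed by Cauchy--Schwarz absorption and the limit $\delta\to0$, is exactly that standard argument. For reference, the paper proves the companion odd-function inequality by a ground-state substitution $f=gh$. Taking $g=\sqrt{y}$ there would reproduce the constant $4$ here, but it offers no advantage in this simple case.
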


\begin{lemma} 
    For function $f(y) \in C^1([0,1])$ with $f(0)=f(1)=0$ and $f(1-y)=-f(y)$, we have
    \begin{align} \label{hardy for odd function}
        \int_{0}^{1} \frac{f^2}{y(1-y)} \mathrm{d}y \leq \frac16\int_{0}^{1}(f'(y))^2 \mathrm{d}y.
    \end{align}
\end{lemma}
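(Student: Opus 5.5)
The plan is to use the odd symmetry to reduce the inequality to the half-interval $[0,\tfrac12]$, where $f$ vanishes at both ends. On that half-interval I will run a ground-state (Picone) argument with an explicit odd polynomial whose potential $-\varphi''/\varphi$ is exactly $6/(y(1-y))$.

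\textbf{Step 1 (reduction to $[0,\tfrac12]$).} Since $f(1-y)=-f(y)$, we have $f(\tfrac12)=0$. Moreover $f^2$ and $(f')^2$ are both symmetric about $y=\tfrac12$, and so is the weight $1/(y(1-y))$. Hence both sides of \eqref{hardy for odd function} are twice the corresponding integrals over $(0,\tfrac12)$. It therefore suffices to prove
\[
6\int_0^{1/2}\frac{f^2}{y(1-y)}\,\mathrm{d}y\le\int_0^{1/2}(f')^2\,\mathrm{d}y
\qquad\text{for } f\in C^1([0,\tfrac12]),\ f(0)=f(\tfrac12)=0 .
\]

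\textbf{Step 2 (choice of ground state).} Take the odd cubic
\[
\varphi(y)=y(1-y)(1-2y)=y-3y^2+2y^3 .
\]
It is positive on $(0,\tfrac12)$, has simple zeros at $0$ and $\tfrac12$, and satisfies $\varphi''=-6(1-2y)$. Consequently
\[
-\frac{\varphi''}{\varphi}=\frac{6}{y(1-y)}\quad\text{on }(0,\tfrac12).
\]
This produces precisely the constant $\tfrac16$. As a sanity check, with $f=\sin 2\pi y$ the two sides of the original inequality are about $3.11$ and $3.29$, so the constant is nearly sharp and the bound is consistent.

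\textbf{Step 3 (Picone identity).} On $(0,\tfrac12)$, writing $f=\varphi g$, one has the pointwise identity
\[
(f')^2=\Bigl(\frac{\varphi'}{\varphi}f^2\Bigr)'-\frac{\varphi''}{\varphi}f^2+\varphi^2(g')^2 .
\]
This gives $(f')^2\ge (f^2\varphi'/\varphi)'+6f^2/(y(1-y))$. Integrate over $[\delta,\tfrac12-\delta]$ and let $\delta\to0$.

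\textbf{Step 4 (boundary terms).} The only delicate point is that the boundary terms $f^2\varphi'/\varphi$ vanish at both endpoints. At $y=0$: $f\in C^1$ with $f(0)=0$ gives $|f|\le Cy$, while $\varphi\sim y$ and $\varphi'(0)=1$, so $f^2\varphi'/\varphi=O(y)\to0$. At $y=\tfrac12$: $f(\tfrac12)=0$ gives $|f|\le C|y-\tfrac12|$, while $\varphi$ has a simple zero with $\varphi'(\tfrac12)=-\tfrac12$, so again the term is $O(|y-\tfrac12|)\to0$.

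Monotone convergence then handles the weighted integral on the left. This yields the half-interval inequality of Step 1, and hence \eqref{hardy for odd function}.
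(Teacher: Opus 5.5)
Your proof is correct and follows the paper's argument. The paper uses the same ground state $g(y)=y(1-y)(1-2y)$ on $[0,\tfrac12]$ with $-g''=6g/(y(1-y))$, writes $f=gh$, integrates the cross term $2hh'gg'$ by parts (exactly your Picone identity), and then doubles by symmetry. Your Step 4 adds the justification that the boundary term $f^2g'/g$ vanishes at $0$ and $\tfrac12$, which the paper's integration by parts uses without comment.
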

\begin{proof}
   Let $g(y)=y(1-y)(1-2y), y\in [0,\frac12]$, then 
   $$-g''(y)=\frac{6g}{y(1-y)}.$$
 Let $h(y):=\frac{f}{g}, y\in [0,\frac12]$,  then we deduce that
   \begin{align}
   \int_{0}^{\frac12}(f'(y))^2 \mathrm{d}y=&\int_0^{\frac12} \big(|h'(y)|^2g^2(y)+2h(y)h'(y)g(y)g'(y)+|g'(y)|^2h^2(y)\big)\mathrm{d}y\nonumber\\
   =&\int_0^{\frac12} \big(-g(y)g''(y)h^2(y)+|h'(y)|^2g^2(y)\big)\mathrm{d}y\nonumber\\
   \geq & 6\int_0^{\frac12} \frac{g^2(y)h^2(y)}{y(1-y)}\mathrm{d}y=6 \int_{0}^{\frac12} \frac{f^2}{y(1-y)} \mathrm{d}y.\nonumber
   \end{align}
   By symmetry, we deduce (\ref{hardy for odd function}).
\end{proof}

\begin{proposition} \label{existence of Euler-1}
    The linearized Euler system (\ref{eque3}) has a solution $ (u_e^{(1)}, v_e^{(1)}) $ which satisfies 
    \begin{align} \label{p283}
    \begin{aligned}
        &\|\partial_x^j \partial_y^k (u_e^{(1)}, v_e^{(1)})\|_{L^2} \leq C_{j,k},~~\forall j,k \geq 0, \\
        &\int_{0}^{2\pi} v_e^{(1)}(x,y) \mathrm{d}x = 0, ~~ \forall y \in [0,1],\\
        &u_e^{(1)}(x, y)=u_e^{(1)}(x, 1-y),~~v_e^{(1)}(x, y)= -v_e^{(1)}(x, 1-y).
    \end{aligned}
    \end{align}
\end{proposition}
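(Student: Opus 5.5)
We work with a stream function and reduce \eqref{eque3} to a family of ODEs in $y$, one for each Fourier mode in $x$. Since $(u_p^{(\frac23)},v_p^{(1)})$ and $(\hat u_p^{(\frac23)},\hat v_p^{(1)})$ were taken to be zero, the boundary data in \eqref{eque3} are $v_e^{(1)}|_{y=0,1}=0$.

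\textbf{Zero mode and stream function.} Averaging the incompressibility condition in $x$ gives $\partial_y\int_0^{2\pi}v_e^{(1)}\,dx=0$. Together with $v_e^{(1)}|_{y=0}=0$ this yields $\int_0^{2\pi}v_e^{(1)}\,dx=0$ for every $y$, which is the second line of \eqref{p283}. Averaging the first equation in $x$ and using $\int_0^{2\pi} F_u\,dx=0$ from \eqref{compatibility} leaves $2\alpha+\overline{\partial_x p_e^{(1)}}=0$. This is absorbed by the mean pressure gradient $\Pi=-2\alpha$ of Remark~\ref{rem:pressure}, so we may take the $x$-mean of $u_e^{(1)}$ to be $0$. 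For the nonzero modes we write $u_e^{(1)}=\psi_y$, $v_e^{(1)}=-\psi_x$ with $\psi$ mean-free in $x$.

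\textbf{Vorticity equation.} Apply $\partial_y$ to the first equation and $\partial_x$ to the second, and subtract. Using $u_x+v_y=0$, the constant $u_e''$ term drops out and we obtain
\[
u_e(y)\,\partial_x\Delta\psi+2\alpha\,\partial_x\psi=G:=\partial_yF_u-\partial_xF_v .
\]
For the Fourier mode $k\neq0$ (in $x$) this becomes
\[
-\psi_k''+k^2\psi_k-\frac{2}{y(1-y)}\psi_k=-\frac{G_k}{ik\,\alpha\, y(1-y)},\qquad \psi_k(0)=\psi_k(1)=0 .
\]
Since $F$ is compactly supported in $\Omega$, the right-hand side is smooth, compactly supported in $(0,1)$, and rapidly decaying in $k$.

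\textbf{Coercivity and existence.} Use the symmetry. By \eqref{compatibility}, $G$ is odd about $y=\frac12$, so we pose the problem in the closed subspace of odd functions in $H^1_0(0,1)$. On that subspace, \eqref{hardy for odd function} gives
\[
2\int_0^1\frac{\psi_k^2}{y(1-y)}\,dy\le\frac13\int_0^1|\psi_k'|^2\,dy .
\]
Hence the bilinear form is coercive, with bound at least $\tfrac23\|\psi_k'\|^2+k^2\|\psi_k\|^2$. Lax--Milgram then gives a unique odd solution $\psi_k$ with $\|\psi_k\|_{H^1}\lesssim |k|^{-1}\|G_k\|$. The map is equivariant under $y\mapsto1-y$, and the problem is also uniquely solvable in $H^1_0$ by the same Hardy bound restricted to odd data, so the odd solution is the solution. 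Oddness of $\psi_k$ gives $v_e^{(1)}$ odd and $u_e^{(1)}=\psi_y$ even, which is the third line of \eqref{p283}. The pressure is then recovered from the first two equations. They are compatible because the vorticity equation holds, and $\partial_x p$ is periodic except for the constant $\Pi$.

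\textbf{Main obstacle: regularity at the walls.} The potential $2/(y(1-y))$ is singular at the walls. Near $y=0$ the indicial equation is $r(r-1)+2=0$, with roots $r=\frac{1\pm i\sqrt7}{2}$. So the homogeneous solutions behave like $y^{1/2}y^{\pm i\sqrt7/2}$, which are not in $H^1$: their derivative squared is like $1/y$, and its integral diverges logarithmically.

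Because no root is an integer, the recursion $n(n-1)a_n+2a_{n-1}+\dots=(\text{data})$ determines a formal power-series, hence smooth, local solution near each wall. The difference between $\psi_k$ and that smooth local solution is an $H^1$ homogeneous solution near the wall, so it must vanish. Therefore $\psi_k\in C^\infty([0,1])$.

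To obtain the bounds in \eqref{p283} we then bootstrap the ODE, writing $\psi_k''$ in terms of $\psi_k$ and the singular term $\psi_k/(y(1-y))$. The latter is controlled by Hardy-type bounds such as \eqref{hardy2} applied to derivatives, with constants growing at most polynomially in $|k|$. The rapid decay of $G_k$ makes the sum over $k$ converge in every $H^m$. Keeping these weighted estimates polynomial in $k$ near the singular endpoints is the step I expect to require the most care.
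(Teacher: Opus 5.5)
Your existence and symmetry argument is sound and is essentially the paper's. Both use coercivity of $-\partial_y^2-\frac{2}{y(1-y)}$ on odd $H^1_0$ functions via \eqref{hardy for odd function}, followed by Lax--Milgram. You carry this out mode by mode for the stream function instead of for $v_e^{(1)}$ on $\Omega$. Your derivation of $\int_0^{2\pi}v_e^{(1)}\,dx=0$ and of the mean pressure gradient $-2\alpha$ is correct, and more explicit than the paper's. Setting the $x$-mean of $u_e^{(1)}$ to zero is legitimate for the proposition as stated, since \eqref{eque3} does not see this mean. Note, though, that the paper fixes this mean through $Z$ in \eqref{add condition} on purpose: solvability of the next-order Euler problem (Remark~\ref{remark2.2}) depends on it.

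The wall-regularity paragraph, however, is wrong. Since $\frac{2}{y(1-y)}=\frac{2}{y}+O(1)$ is only first-order singular, $y^2\cdot\frac{2}{y(1-y)}\to0$, so the indicial equation at $y=0$ is $r(r-1)=0$, not $r(r-1)+2=0$. Your own recursion $n(n-1)a_n+2a_{n-1}+\dots$ already shows this: the coefficient of $a_n$ vanishes at $n=0,1$. The roots are the integers $0$ and $1$. The local homogeneous solutions are $\psi_1=y+O(y^2)$, which is analytic, and $\psi_2=1-2y\log y+O(y)$, and both lie in $H^1$ near the wall. Hence ``no root is an integer'' and ``the difference is an $H^1$ homogeneous solution, so it vanishes'' are both false, and the step as written does not establish $\psi_k\in C^\infty([0,1])$.

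The conclusion is still true, for a different reason. $F$ vanishes near the walls, so $\psi_k$ solves the homogeneous equation there. The condition $\psi_k(0)=0$ excludes $\psi_2$, since $\psi_2(0)=1$. Thus $\psi_k=c\,\psi_1$ near $y=0$, and likewise near $y=1$.

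Simpler still, no Frobenius analysis is needed. If $\psi(0)=0$, then $\partial_y^m(\psi/y)=\int_0^1t^m\psi^{(m+1)}(ty)\,dt$, so $\|\partial_y^m(\psi/y)\|_{L^2}\le\frac{2}{2m+1}\|\psi^{(m+1)}\|_{L^2}$, and similarly for $\psi/(1-y)$. Therefore the equation $\psi_k''=k^2\psi_k-\frac{2\psi_k}{y(1-y)}+h_k$, with $h_k$ smooth and compactly supported, upgrades $H^{m+1}$ to $H^{m+2}$ with constants polynomial in $k$. This is the induction the paper performs, tersely, on \eqref{eliminate pressure}. The wall singularity is subcritical; the only delicate point is the coercivity, which you already obtained from the odd symmetry.
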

\begin{proof}
Eliminate the pressure $p_e^{(1)}$, we get
    \begin{align} \label{eliminate pressure}
    \left \{
    \begin{aligned}
    &-u_e(y)\triangle v_e^{(1)}-2\alpha v_e^{(1)} = \partial_y F_u - \partial_xF_v=:\tilde{F},\\
    &\partial_xu_e^{(1)}+\partial_yv_e^{(1)}=0, \\  
    &(u_e^{(1)}, v_e^{(1)})(x, y)=(u_e^{(1)}, v_e^{(1)})(x+2\pi, y), \\
    &v_e^{(1)}|_{y=1}=0,~~ v_e^{(1)}|_{y=0}=0.
    \end{aligned}
    \right .
    \end{align}
Set $\tilde{H}(\Omega) := \left \{ f\in H_0^1(\Omega) ~|~ f(x,y)=-f(x,1-y), \forall (x, y)\in \Omega \right \} $ equipped with the standard inner product 
\[
(~f,~g~)_{\tilde{H}(\Omega)} := \int_{\Omega} fg\mathrm{d}x\mathrm{d}y+\int_{\Omega} (\partial_xf\partial_xg+\partial_yf\partial_yg)\mathrm{d}x\mathrm{d}y.
\]
 
For any $v,  w \in \tilde{H}(\Omega)$, let
\[
B(v, w):=\int_{\Omega}\Big(\nabla v\cdot \nabla w-\frac{2vw}{y(1-y)}\Big) \mathrm{d}x \mathrm{d}y.
\]
By Hardy inequality (\ref{hardy for odd function}), it's easy to obtain 
\[
|B(v,w)| \le C \|v\|_{H^1} \|w\|_{H^1}
\]
and
\begin{align}\label{coercity}
B(v,v)=&\int_{\Omega}\Big(|\nabla v|^2-\frac{2v^2}{y(1-y)}\Big)\mathrm{d}x \mathrm{d}y \nonumber\\
\geq & \|\nabla v\|^2_{L^2}-\frac{1}{3}\|\partial_yv\|^2_{L^2}\geq \frac{2}{3}\|\nabla v\|^2_{L^2}.
\end{align}
Thus, by Lax-Milgram Theorem and notice $\frac{\tilde{F}}{u_e(y)}\in \tilde{H}(\Omega)$, we deduce that there exists $ v_e^{(1)} \in \tilde{H}(\Omega)$ such that for any $w \in \tilde{H}(\Omega)$,
\begin{align} \label{weak}
B(v_e^{(1)},w)=\Big(\frac{\tilde{F}}{u_e(y)}, w\Big)_{L^2}.
\end{align}
By  (\ref{coercity}) and (\ref{weak}), we obtain
\[
\begin{aligned}
\|v_e^{(1)}\|_{H^1} \le C.
\end{aligned}
\]
 Similarly, by applying $\partial_x^j$ to $(\ref{eliminate pressure})_1$, we obtain 
$$\|\partial_x^jv_e^{(1)}\|_{L^2} \le C_j,~~\forall j\ge 1.$$ 
For $ \partial_{yy}v_e^{(1)}$, we use the equation (\ref{eliminate pressure}), 
\[
\partial_{yy}v_e^{(1)} = -\frac{\partial_yF_u-\partial_xF_v}{\alpha y(1-y)}-\frac{2v_e^{(1)}}{y(1-y)}-\partial_{xx}v_e^{(1)}
\]
So we can obtain that $\|\partial_{yy}v_e^{(1)}\|_{L^2} \le C$. For $\partial_y^kv_e^{(1)},~k\ge 3$, by using the equations (\ref{eliminate pressure}) and induction, we also get that $\|\partial_y^kv_e^{(1)}\|_{L^2}\le C_k$. Similarly, by applying $ \partial_x^j $ to $(\ref{eliminate pressure})_1$, we have $\|\partial_x^j \partial_y^kv_e^{(1)}\|_{L^2} \le C_{j,k}$. \\
\indent Then we define $u_e^{(1)}$ by
\begin{align} \label{ue1}
u_e^{(1)}(x, y)= -\int_{0}^{x} \partial_y v_e^{(1)} (\bar{x},y)\mathrm{d}\bar{x}+Z(y),
\end{align}
where $Z(y)$ satisfies 
\begin{align} \label{add condition}
\begin{aligned}
    Z'''(y)=&\frac{1}{2\pi}\Big( \int_{0}^{2\pi}\partial_y^2v_e^{(1)}(x, y)\int_{0}^{x}\partial_yv_e^{(1)}(\bar{x},y)\mathrm{d}\bar{x}\mathrm{d}x-
    \int_{0}^{2\pi}v_e^{(1)}(x,y)\int_{0}^{x}\partial_y^3v_e^{(1)}(\bar{x},y)\mathrm{d}\bar{x}\mathrm{d}x \\
    &+\int_{0}^{2\pi} \int_{0}^{x}\partial_y^4v_e^{(1)}(\bar{x},y)\mathrm{d}\bar{x}\mathrm{d}x\Big), \\
    Z(y)=&Z(1-y).
\end{aligned}
\end{align}
(\ref{ue1}) implies $u_e^{(1)}(x, y)=u_e^{(1)}(x, 1-y)$. Using the estimate of $v_e^{(1)}$, we have $\|\partial_x^j \partial_y^ku_e^{(1)}\|_{L^2}\le C_{j,k}$. \\
Lastly, we set 
\[
p_e^{(1)}=\Phi(y)+\int_{0}^{x}(F_u+u_e'')\mathrm{d}\bar{x}-u_e \int_{0}^{x}\partial_xu_e^{(1)}\mathrm{d}\bar{x}-u_e'\int_{0}^{x}v_e^{(1)}\mathrm{d}\bar{x},
\]
where $\Phi(y)$ satisfies that 
\[
\Phi'(y)+u_e \partial_xv_e^{(1)}(0, y)-F_v(0, y) = 0,~~\Phi(y)=\Phi(1-y),
\]
which implies that
\[
\left \{
\begin{aligned}
&u_e(y)\partial_xu_e^{(1)}+u'_e(y)v_e^{(1)}+\partial_xp_e^{(1)}-u_e''(y)=F_u, \\
&u_e(y)\partial_xv_e^{(1)} + \partial_yp_e^{(1)} = F_v.
\end{aligned}
\right .
\]

\begin{remark} \label{remark2.2}
    Here, we will explain why (\ref{add condition}) is required. In fact, if we consider the equations of $(u_e^{(2)},v_e^{(2)})$, i.e.
    \begin{align} \label{eque6}
    \left \{
        \begin{aligned}
            & u_e(y)\partial_xu_e^{(2)}+v_e^{(2)}u'_e(y)+\partial_xp_e^{(2)}=f_e^{(6)}, \\
            &u_e(y)\partial_xv_e^{(2)}+\partial_yp_e^{(2)}=g_e^{(6)},\\
            &\partial_xu_e^{(2)}+\partial_yv_e^{(2)}=0, \\
            &(u_e^{(2)},v_e^{(2)})(x,y)=(u_e^{(2)},v_e^{(2)})(x+2\pi, y),\\
            &v_e^{(2)}|_{y=1}=-v_p^{(2)}|_{\xi=0},~v_e^{(2)}|_{y=0}=-\hat{v}_p^{(2)}|_{\gamma=0},
        \end{aligned}
    \right .
    \end{align}
    where 
    \[
    \begin{aligned}
        &f_e^{(6)}=-u_e^{(1)}\partial_xu_e^{(1)}-v_e^{(1)}\partial_yu_e^{(1)}+\bigtriangleup u_e^{(1)}, \\
        &g_e^{(6)}=-u_e^{(1)}\partial_xv_e^{(1)}-v_e^{(1)}\partial_yv_e^{(1)}+\bigtriangleup v_e^{(1)},
    \end{aligned}
    \]
    and $(v_p^{(2)}, \hat{v}_p^{(2)})$ can be found in (\ref{equpk}) and (\ref{eqhupk}) respectively in next subsection. \\
    By eliminating $p_e^{(2)}$ from (\ref{eque6}), we obtain
    \[
    \left \{
    \begin{aligned}
        &-u_e(y)\triangle v_e^{(2)}+u''_e(y)v_e^{(2)}=\partial_yf_e^{(6)}-\partial_xg_e^{(6)}, \\
        &v_e^{(2)}(x, y)=v_e^{(2)}(x+2\pi, y), \\
        &v_e^{(2)}|_{y=1}=-v_p^{(2)}|_{\xi=0},~v_e^{(2)}|_{y=0}=-\hat{v}_p^{(2)}|_{\gamma=0}.
    \end{aligned}
    \right .
    \]
    Thus, we only need to show that $\int_{0}^{2\pi} \partial_yf_e^{(6)} \mathrm{d}x = 0$. 
    \[
    \begin{aligned}
        \int_{0}^{2\pi} \partial_yf_e^{(6)} \mathrm{d}x =&\int_{0}^{2\pi}( u_e^{(1)} \partial_{yy}v_e^{(1)}-v_e^{(1)}\partial_{y}^2u_e^{(1)}+\partial_{y}^3u_e^{(1)}) \mathrm{d}x \\
        =&-\int_{0}^{2\pi} \partial_{y}^2v_e^{(1)}\int_{0}^{x}\partial_yv_e^{(1)}(\bar{x},y)\mathrm{d}\bar{x}\mathrm{d}x+
        \int_{0}^{2\pi}v_e^{(1)}\int_{0}^{x}\partial_{y}^3v_e^{(1)}(\bar{x},y)\mathrm{d}\bar{x}\mathrm{d}x \\
        &-\int_{0}^{2\pi}\int_{0}^{x}\partial_y^4v_e^{(1)}(\bar{x},y)\mathrm{d}\bar{x}\mathrm{d}x+2\pi Z'''(y) \\
        =&0,
    \end{aligned}
    \]
    which indicates (\ref{add condition}). \\
    Furthermore, we can get that $\int_{0}^{2\pi} f_e^{(6)}\mathrm{d}x$ is a constant, denoted by $C_6:=\int_{0}^{2\pi} f_e^{(6)}\mathrm{d}x$.
\end{remark}

\end{proof}

\subsubsection{The solvability of boundary layer equations for $(u_p^{(1)}, v_p^{(\frac{4}{3})})$ and $(\hat{u}_p^{(1)}, \hat{v}_p^{(\frac{4}{3})})$.}
\begin{proposition} \label{Existence of boundary layer-1}
    The boundary layer equations (\ref{equp3}) and (\ref{eqhup3}) have a solution $ (u_p^{(1)}, v_p^{(\frac{4}{3})}) $ and $(\hat{u}_p^{(1)}, \hat{v}_p^{(\frac{4}{3})})$ which satisfies
    \begin{align} \label{p281}
    \begin{aligned}
       & \int_{-\infty}^{0} \int_{0}^{2\pi} |\partial_x^j\partial_{\xi}^m(u_p^{(1)}-A_1, v_p^{(\frac{4}{3})})|^2 \xi^{2l} \mathrm{d}x \mathrm{d}\xi \le C_{j,m,l}, \quad \forall j,m,l\in \mathbb{N}, \\
       &\int_{0}^{2\pi} v_p^{(\frac{4}{3})}(x, \xi) \mathrm{d}x=0, \quad \forall \xi \le 0,
    \end{aligned}
    \end{align}
    and
    \begin{align} \label{p282}
    \begin{aligned}
       & \int_{0}^{+\infty} \int_{0}^{2\pi} |\partial_x^j\partial_{\gamma}^m(\hat{u}_p^{(1)}-A_2, \hat{v}_p^{(\frac{4}{3})})|^2 \gamma^{2l} \mathrm{d}x \mathrm{d}\gamma \le C_{j,m,l}, \quad \forall j,m,l\in \mathbb{N}, \\
       &\int_{0}^{2\pi} \hat{v}_p^{(\frac{4}{3})}(x, \gamma) \mathrm{d}x=0, \quad \forall \gamma \ge 0,
    \end{aligned}
    \end{align}
\end{proposition}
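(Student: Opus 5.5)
Since $\partial_\xi p_p^{(4/3)}=0$ and $p_p^{(4/3)}\to0$ as $\xi\to-\infty$, we get $p_p^{(4/3)}\equiv0$. The system \eqref{equp3} then becomes linear and decouples in horizontal Fourier modes. Write the Dirichlet datum as $-u_e^{(1)}(x,1)=\sum_{n}b_n e^{inx}$. The coefficients $b_n$ decay rapidly by Proposition~\ref{existence of Euler-1} and the trace theorem. The constant $A_1$ in \eqref{p281} is the zero mode $A_1:=b_0=-\frac1{2\pi}\int_0^{2\pi}u_e^{(1)}(x,1)\,dx$.

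\textbf{Zero mode.} For $n=0$ the equation reduces to $-\partial_{\xi\xi}u_{p,0}=\alpha v_{p,0}$. Since $\partial_\xi v_{p,0}=0$ and $v_{p,0}\to0$, we have $v_{p,0}=0$. Then $u_{p,0}$ is affine, and boundedness forces $u_{p,0}\equiv A_1$. This gives $\int_0^{2\pi}v_p^{(4/3)}dx=0$ and explains why only $u_p^{(1)}-A_1$ is expected to decay.

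\textbf{Nonzero modes.} Eliminate $v$ by differentiating the momentum equation in $\xi$ and using $\partial_\xi v=-\partial_x u$. This turns the $-\alpha v$ term into $+\alpha\partial_x u$, which cancels against $-\alpha\partial_x u$ coming from $\partial_\xi(-\alpha\xi\partial_x u)$. Setting $w=\partial_\xi u_n$, we obtain
\[
\partial_{\xi\xi}w+in\alpha\xi\, w=0,\qquad \xi<0 .
\]
This is an Airy equation in the rescaled variable $z=(in\alpha)^{1/3}\xi$ (appropriate branch). Choose the solution that decays as $\xi\to-\infty$, namely $w=c_n\,\mathrm{Ai}\big(e^{\pm i\pi/6}(n\alpha)^{1/3}|\xi|\big)$ with the branch giving super-exponential decay $\exp(-c|n|^{1/2}|\xi|^{3/2})$. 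Then define
\[
u_n(\xi)=-\int_{\xi}^{-\infty}w\,d\xi',\qquad v_n=-in\int_{-\infty}^{\xi}u_n\,d\xi',
\]
so that $u_n$ and $v_n$ decay. Undoing the $\xi$-derivative leaves an integration constant in the momentum equation. It vanishes as $\xi\to-\infty$ by the decay, so the original equation holds with $p\equiv0$. Fix $c_n$ from $u_n(0)=b_n$. This requires $\int_{-\infty}^0 w\,d\xi\neq0$, which is the classical nonvanishing of $\int_0^\infty \mathrm{Ai}(e^{i\pi/6}t)\,dt$. It can be computed explicitly (it equals $\tfrac13$ times a unimodular factor, by contour rotation).

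\textbf{Estimates, lower layer, and main obstacle.} By scaling, $\|\xi^l\partial_\xi^m u_n\|_{L^2}\lesssim |b_n|\,|n|^{C(m,l)}$. The polynomial growth in $n$ is absorbed by the rapid decay of $b_n$, which gives \eqref{p281} after summing with the weights $n^{2j}$. The lower layer \eqref{eqhup3} is handled identically, with $\gamma>0$ and the conjugate branch, since the sign of $\alpha\gamma$ is flipped. For symmetry, note that $u_e^{(1)}(x,0)=u_e^{(1)}(x,1)$. We may therefore define $\hat u_p^{(1)}(x,\gamma)=u_p^{(1)}(x,-\gamma)$ and $\hat v_p^{(4/3)}(x,\gamma)=-v_p^{(4/3)}(x,-\gamma)$, check directly that this solves \eqref{eqhup3}, and take $A_2=A_1$. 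I expect the main obstacle to be the uniform-in-$n$ quantitative bounds: choosing the correct Airy branch for each sign of $n$, and verifying that the normalization $|\int w|$ is bounded below uniformly after scaling. Once scaled, this is independent of $n$, so only the polynomial factors need tracking.
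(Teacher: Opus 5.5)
Your proof is correct, but it takes a different route from the paper. The paper does not construct the layer. It quotes well-posedness of (\ref{equp3}) from Proposition 4.6 of \cite{FPZ} and only proves the reflection relation (\ref{addition}). It does this by checking that the reflected lower solution solves the upper problem with the same datum (using $u_e^{(1)}(x,0)=u_e^{(1)}(x,1)$), and then tacitly invokes uniqueness.

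You solve the problem explicitly. You get $p_p^{(4/3)}\equiv 0$, and the zero mode is the constant $A_1=b_0$; this is exactly why $\int_0^{2\pi}v_p^{(4/3)}\,dx=0$ and why only $u_p^{(1)}-A_1$ decays. Each nonzero mode is $u_n(\xi)=b_n\,G_\pm\big((|n|\alpha)^{1/3}|\xi|\big)/G_\pm(0)$, with $G_\pm(s)=\int_s^\infty\mathrm{Ai}(e^{\pm i\pi/6}s')\,ds'$ and $G_\pm(0)=\tfrac13e^{\mp i\pi/6}\neq0$. So every weighted norm is a fixed, $n$-independent profile integral times a power of $|n|$, and the rapid decay of $b_n$ absorbs that power.

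What your route buys is a self-contained argument with explicit decay $\exp(-c|n|^{1/2}|\xi|^{3/2})$. Because you define $(\hat u_p^{(1)},\hat v_p^{(4/3)})$ by reflection, (\ref{addition}) and $A_2=A_1$ hold by construction, with no uniqueness theorem needed. What the citation route buys the paper is a framework it later reuses, by analogy, for the forced problems (\ref{equpk}). Those have sources $f_{p,k},g_{p,k}$ and nonconstant pressure, and your homogeneous Airy formula would need to be supplemented there by a variation-of-constants argument.

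Two small corrections. First, the scaling variable should be $(|n|\alpha)^{1/3}|\xi|$, not $(n\alpha)^{1/3}|\xi|$. Second, and more importantly, your remark that the lower layer needs the ``conjugate branch'' is wrong. Differentiating (\ref{eqhup3}) gives $\hat w''=in\alpha\gamma\,\hat w$. In the wall-distance variable $t=-\xi$ this is the same equation as yours, $w_{tt}=in\alpha t\,w$, so the same branch $e^{i\,\mathrm{sgn}(n)\pi/6}$ is needed. The conjugate branch would instead solve $\hat w''=-in\alpha\gamma\,\hat w$. Since you build the lower layer by reflection anyway, simply delete that sentence.

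Real-valuedness of the Fourier sum, which you did not mention, follows from $b_{-n}=\overline{b_n}$ and $\mathrm{Ai}(\bar z)=\overline{\mathrm{Ai}(z)}$.
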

\noindent where $A_1:= \lim\limits_{\xi \to -\infty} u_p^{(1)}(x, \xi),~~A_2:=\lim\limits_{\gamma \to +\infty} \hat{u}_p^{(1)}(x, \gamma)$ are two constants. \\
Furthermore,
\begin{align} \label{addition}
   u_p^{(1)}(x,-\gamma)=\hat{u}_p^{(1)}(x,\gamma),~~ v_p^{(\frac{4}{3})}(x, -\gamma)=-\hat{v}_p^{(\frac{4}{3})}(x, \gamma), ~~ \forall (x,\gamma)\in [0, 2\pi]\times [0,+\infty).
\end{align}
\begin{proof}
The well-posedness can be find in Proposition 4.6 of \cite{FPZ}. Now, we demonstrate that (\ref{addition}). In fact, for $(x,\bar{\gamma})\in [0, 2\pi]\times [0,+\infty)$, let
     $$\breve{u}_p^{(1)}(x, \bar{\gamma}):=\hat{u}_p^{(1)}(x,-\bar{\gamma}),~\breve{p}_p^{(\frac{4}{3})}(x, \bar{\gamma}):=\hat{p}_p^{(\frac{4}{3})}(x, -\bar{\gamma}), ~\breve{v}_p^{(\frac{4}{3})}(x, \bar{\gamma}):=-\hat{v}_p^{(\frac{4}{3})}(x, -\bar{\gamma}), ~$$
      then we obtain
    \[
    \left \{
    \begin{aligned}
        &-\alpha \bar{\gamma} \partial_x \breve{u}_p^{(1)}-\alpha \breve{v}_p^{(\frac{4}{3})}+\partial_x\breve{p}_p^{(\frac{4}{3})}-\partial_{\bar{\gamma} \bar{\gamma}} \breve{u}_p^{(1)}=0, \\
        &\partial_x \breve{u}_p^{(1)}+ \partial_{\bar{\gamma}}\breve{v}_p^{(\frac{4}{3})}=0, \\
        &\partial_{\bar{\gamma}}\breve{p}_p^{(\frac{4}{3})}=0, \\
        &(\breve{u}_p^{(1)}, \breve{v}_p^{(\frac{4}{3})})(x, \bar{\gamma}) = (\breve{u}_p^{(1)}, \breve{v}_p^{(\frac{4}{3})})(x+2\pi, \bar{\gamma}), \\
        &\breve{u}_p^{(1)}|_{\bar{\gamma} = 0} = -u_e^{(1)}|_{y=0}, \\
        &\lim\limits_{\bar{\gamma} \to -\infty} (\partial_{\bar{\gamma}}\breve{u}_p^{(1)}, \breve{v}_p^{(\frac{4}{3})}, \breve{p}_p^{(\frac{4}{3})}) = (0,0,0).
    \end{aligned}
    \right .
    \]
    Since $u_e^{(1)}(x, 0) = u_e^{(1)}(x, 1)$, there holds
     $$ u_p^{(1)}(x, \xi) = \breve{u}_p^{(1)}(x, \xi), ~~v_p^{(\frac{4}{3})}(x, \xi) = \breve{v}_p^{(\frac{4}{3})}(x, \xi), ~~(x,\xi)\in [0, 2\pi]\times [0,+\infty), $$
      that is  (\ref{addition}).
\end{proof}

\begin{remark}
    Since we want the solution of the boundary layer equations decaying fast at infinity, we define
    \[
    \tilde{u}_p^{(1)}:=u_p^{(1)}-A_1,~~\tilde{\hat{u}}_p^{(1)}:=\hat{u}_p^{(1)}-A_2,
    \]
    then we use $\tilde{u}_p^{(1)}$ and $\tilde{\hat{u}}_p^{(1)}$ to replace the original $u_p^{(1)}$ and $\hat{u}_p^{(1)}$ respectively. \\
    \indent We need to correct $u_e^{(1)}$ to $\tilde{u}_e^{(1)}$ such that
    \[
    \begin{aligned}
        &\tilde{u}_e^{(1)}|_{y=1}=u_e^{(1)}|_{y=1}+A_1, \\
        &\tilde{u}_e^{(1)}|_{y=0}=u_e^{(1)}|_{y=0}+A_2,
    \end{aligned}
    \]
    and $(\tilde{u}_e^{(1)},v_e^{(1)})$ is still a solution of the equations (\ref{eque3}). \\
    \indent Let
    \[
    \tilde{u}_e^{(1)}=u_e^{(1)}+Q^{(1)}(y),
    \]
    where $Q^{(1)}(y) \in C^{\infty}([0,1])$ and it satisfies $Q^{(1)}(1)=A_1$, $Q^{(1)}(0)=A_2$. Then we have
    \begin{enumerate}
        \item[$\bullet$] $(\tilde{u}_e^{(1)},v_e^{(1)})$ is still a solution of the equations (\ref{eque3});
        \item[$\bullet$] $\tilde{u}_e^{(1)}|_{y=1}+\tilde{u}_p^{(1)}|_{\xi=0}$=0,~~$\tilde{u}_e^{(1)}|_{y=0}+\tilde{\hat{u}}_p^{(1)}|_{\gamma=0}$=0.
    \end{enumerate}
    Then we use $\tilde{u}_p^{(1)}$, $\tilde{\hat{u}}_p^{(1)}$, $\tilde{u}_e^{(1)}$ to be the new $u_p^{(1)}$, $\hat{u}_p^{(1)}$, $u_e^{(1)}$ to enter the subsequent round iteration.
\end{remark}

\subsection{Construction of higher order terms}
\subsubsection{Equation for $(u_e^{(\frac{k}{3})}, v_e^{(\frac{k}{3})})$} For $k \ge 4$, by substituting the Euler expansion (\ref{Euelr expansion}) into (\ref{NS equation}) and collecting the $\epsilon^{\frac{k}{3}}$ order terms, we have the following steady Euler equations
\begin{align} \label{equek}
    \left \{
    \begin{aligned}
        &u_e(y)\partial_xu_e^{(\frac{k}{3})}+u'_e(y)v_e^{(\frac{k}{3})}+\partial_x p_e^{(\frac{k}{3})} = f_e^{(k)}, \\
        &u_e(y)\partial_xv_e^{(\frac{k}{3})}+\partial_yp_e^{(\frac{k}{3})} = g_e^{(k)}, \\
        &\partial_xu_e^{(\frac{k}{3})} + \partial_yv_e^{(\frac{k}{3})} = 0, \\
        &(u_e^{(\frac{k}{3})}, v_e^{(\frac{k}{3})})(x, y)=(u_e^{(\frac{k}{3})}, v_e^{(\frac{k}{3})})(x+2\pi, y),\\
        &v_e^{(\frac{k}{3})}|_{y=1}=-v_p^{(\frac{k}{3})}|_{\xi = 0},~v_e^{(\frac{k}{3})}|_{y=0}=-\hat{v}_p^{(\frac{k}{3})}|_{\gamma=0},
    \end{aligned}
    \right .
\end{align}
where
\[
\begin{aligned}
    &f_e^{(k)}:=-\sum\limits_{i=3}^{k-3}u_e^{(\frac{i}{3})}\partial_xu_e^{(\frac{k-i}{3})}-\sum\limits_{i=3}^{k-3}v_e^{(\frac{i}{3})}\partial_yu_e^{(\frac{k-i}{3})}+\bigtriangleup u_e^{(\frac{k-3}{3})}, \\
    &g_e^{(k)}:=-\sum\limits_{i=3}^{k-3} u_e^{(\frac{i}{3})}\partial_xv_e^{(\frac{k-i}{3})}-\sum\limits_{i=3}^{k-3} v_e^{(\frac{i}{3})}\partial_yv_e^{(\frac{k-i}{3})}+\bigtriangleup v_e^{(\frac{k-3}{3})},
\end{aligned}
\]
and $(v_p^{(\frac{k}{3})}, \hat{v}_p^{(\frac{k}{3})})$ is defined in (\ref{equpk}) and (\ref{eqhupk}) respectively in next subsection.
\subsubsection{Equation for $(u_p^{(\frac{k}{3})}, v_p^{(\frac{k+1}{3})}, p_p^{(\frac{k+1}{3})})$} For $k \ge 4$, by substituting the upper boundary expansion (\ref{upper boundary layer expansion}) into (\ref{NS equation}) and collecting the $\epsilon^{\frac{k}{3}}/\epsilon^{\frac{k+1}{3}}$ order terms, we have the following steady boundary layer equations
\begin{align} \label{equpk}
    \left \{
    \begin{aligned}
        &-\alpha \xi \partial_x u_p^{(\frac{k}{3})}-\alpha v_p^{(\frac{k+1}{3})}+\partial_xp_p^{(\frac{k+1}{3})}-\partial_{\xi \xi}u_p^{(\frac{k}{3})} = f_{p,k}(x, \xi), \\
        &\partial_{\xi} p_p^{(\frac{k+1}{3})}=g_{p,k}(x, \xi), \\
        &\partial_xu_p^{(\frac{k}{3})}+\partial_{\xi}v_p^{(\frac{k+1}{3})} = 0, \\
        &(u_p^{(\frac{k}{3})}, v_p^{(\frac{k+1}{3})})(x, \xi) = (u_p^{(\frac{k}{3})}, v_p^{(\frac{k+1}{3})})(x+2\pi, \xi), \\
        &u_p^{(\frac{k}{3})}|_{\xi = 0}=-u_e^{(\frac{k}{3})}|_{y=1}, \\
        &\lim\limits_{\xi \to -\infty}(v_p^{(\frac{k+1}{3})}, p_p^{(\frac{k+1}{3})})=(0,0,0).
    \end{aligned}
    \right .
\end{align}
Here,
\[
\begin{aligned}
    f_{p,k}:=&\alpha \xi^2 \partial_xu_p^{(\frac{k-1}{3})}+2\alpha \xi v_p^{(\frac{k}{3})}+\partial_{xx}u_p^{(\frac{k-2}{3})}-\sum\limits_{j=3}^{k-2}\partial_x u_p^{(\frac{k-j+1}{3})}\sum\limits_{i=3}^{j}\frac{\xi^{j-i}}{(j-i)!}\partial_y^{j-i}u_e^{(\frac{i}{3})}(x,1)  \\
    &-\sum\limits_{j=3}^{k-2}u_p^{(\frac{k-j+1}{3})}\sum\limits_{i=3}^{j}\frac{\xi^{j-i}}{(j-i)!}\partial_x\partial_y^{j-i}u_e^{(\frac{i}{3})}(x,1)\\
    &-\sum\limits_{j=3}^{k-2}u_p^{(\frac{j}{3})}\partial_xu_p^{(\frac{k-j+1}{3})}-\sum\limits_{j=3}^{k-1}\partial_{\xi}u_p^{(\frac{k-j+2}{3})}\sum\limits_{i=3}^{j}\frac{\xi^{j-i}}{(j-i)!}\partial_y^{j-i}v_e^{(\frac{i}{3})}(x,1) \\
    &-\sum\limits_{j=3}^{k-2}v_p^{(\frac{k-j+1}{3})}\sum\limits_{i=3}^{j}\frac{\xi^{j-i}}{(j-i)!}\partial_y^{j-i+1}u_e^{(\frac{i}{3})}(x,1)-\sum\limits_{j=3}^{k-1}v_p^{(\frac{j}{3})}\partial_{\xi}u_p^{(\frac{k-j+2}{3})},    
\end{aligned}
\]
and
\[
\begin{aligned}
    g_{p,k}:=&\alpha \xi \partial_xv_p^{(\frac{k-1}{3})}+\alpha \xi^2\partial_xv_p^{(\frac{k-2}{3})}+\partial_{xx}v_p^{(\frac{k-3}{3})}+\partial_{\xi \xi}v_p^{(\frac{k-1}{3})} \\
    &-\sum\limits_{j=3}^{k-3}\partial_xv_p^{(\frac{k-j}{3})}\sum\limits_{i=3}^{j}\frac{\xi^{j-i}}{(j-i)!}\partial_y^{j-i}u_e^{(\frac{i}{3})}(x,1) \\
    &-\sum\limits_{j=3}^{k-3}u_p^{(\frac{k-j}{3})}\sum\limits_{i=3}^{j}\frac{\xi^{j-i}}{(j-i)!}\partial_x\partial_y^{j-i}v_e^{(\frac{i}{3})}(x,1)-\sum\limits_{j=3}^{k-3}u_p^{(\frac{j}{3})}\partial_xv_p^{(\frac{k-j}{3})} \\
    &-\sum\limits_{j=3}^{k-2}\partial_{\xi} v_p^{(\frac{k-j+1}{3})}\sum\limits_{i=3}^{j}\frac{\xi^{j-i}}{(j-i)!}\partial_y^{j-i}v_e^{(\frac{i}{3})}(x,1) \\
    &-\sum\limits_{j=3}^{k-3} v_p^{(\frac{k-j}{3})}\sum\limits_{i=3}^{j}\frac{\xi^{j-i}}{(j-i)!}\partial_y^{j-i+1}v_e^{(\frac{i}{3})}(x, 1)-\sum\limits_{j=3}^{k-2}v_p^{(\frac{j}{3})}\partial_{\xi} v_p^{(\frac{k-j+1}{3})}.
\end{aligned}
\]

\subsubsection{Equation for $(\hat{u}_p^{(\frac{k}{3})}, \hat{v}_p^{(\frac{k+1}{3})}, \hat{p}_p^{(\frac{k+1}{3})})$} For $k \ge 4$, by substituting the lower boundary expansion (\ref{lower boundary layer expansion}) into (\ref{NS equation}) and collecting the $\epsilon^{\frac{k}{3}}/\epsilon^{\frac{k+1}{3}}$ order terms, we have the following steady boundary layer equations
\begin{align} \label{eqhupk}
    \left \{
    \begin{aligned}
        &\alpha \gamma \partial_x \hat{u}_p^{(\frac{k}{3})}+\alpha \hat{v}_p^{(\frac{k+1}{3})}+\partial_x\hat{p}_p^{(\frac{k+1}{3})}-\partial_{\gamma \gamma}\hat{u}_p^{(\frac{k}{3})} = \hat{f}_{p,k}(x, \gamma), \\
        &\partial_{\gamma} \hat{p}_p^{(\frac{k+1}{3})}=\hat{g}_{p,k}(x, \gamma), \\
        &\partial_x\hat{u}_p^{(\frac{k}{3})}+\partial_{\gamma}\hat{v}_p^{(\frac{k+1}{3})} = 0, \\
        &(\hat{u}_p^{(\frac{k}{3})}, \hat{v}_p^{(\frac{k+1}{3})})(x, \gamma) = (\hat{u}_p^{(\frac{k}{3})}, \hat{v}_p^{(\frac{k+1}{3})})(x+2\pi, \gamma), \\
        &\hat{u}_p^{(\frac{k}{3})}|_{\gamma = 0}=-u_e^{(\frac{k}{3})}|_{y=0}, \\
        &\lim\limits_{\gamma \to +\infty}(\hat{v}_p^{(\frac{k+1}{3})}, \hat{p}_p^{(\frac{k+1}{3})})=(0,0).
    \end{aligned}
    \right .
\end{align}
Here,
\[
\begin{aligned}
    \hat{f}_{p,k}:=&\alpha \gamma^2 \partial_x\hat{u}_p^{(\frac{k-1}{3})}+2\alpha \gamma \hat{v}_p^{(\frac{k}{3})}+\partial_{xx}\hat{u}_p^{(\frac{k-2}{3})}-\sum\limits_{j=3}^{k-2}\partial_x \hat{u}_p^{(\frac{k-j+1}{3})}\sum\limits_{i=3}^{j}\frac{\gamma^{j-i}}{(j-i)!}\partial_y^{j-i}u_e^{(\frac{i}{3})}(x,0)  \\
    &-\sum\limits_{j=3}^{k-2}\hat{u}_p^{(\frac{k-j+1}{3})}\sum\limits_{i=3}^{j}\frac{\gamma^{j-i}}{(j-i)!}\partial_x\partial_y^{j-i}u_e^{(\frac{i}{3})}(x,0)\\
    &-\sum\limits_{j=3}^{k-2}\hat{u}_p^{(\frac{j}{3})}\partial_x\hat{u}_p^{(\frac{k-j+1}{3})}
    -\sum\limits_{j=3}^{k-1}\partial_{\gamma}\hat{u}_p^{(\frac{k-j+2}{3})}\sum\limits_{i=3}^{j}\frac{\gamma^{j-i}}{(j-i)!}\partial_y^{j-i}v_e^{(\frac{i}{3})}(x,0) \\
    &-\sum\limits_{j=3}^{k-2}\hat{v}_p^{(\frac{k-j+1}{3})}\sum\limits_{i=3}^{j}\frac{\gamma^{j-i}}{(j-i)!}\partial_y^{j-i+1}u_e^{(\frac{i}{3})}(x,0)-\sum\limits_{j=3}^{k-1}\hat{v}_p^{(\frac{j}{3})}\partial_{\gamma}\hat{u}_p^{(\frac{k-j+2}{3})},    
\end{aligned}
\]
and
\[
\begin{aligned}
    \hat{g}_{p,k}:=&-\alpha \gamma \partial_x\hat{v}_p^{(\frac{k-1}{3})}+\alpha \gamma^2\partial_x\hat{v}_p^{(\frac{k-2}{3})}+\partial_{xx}\hat{v}_p^{(\frac{k-3}{3})}+\partial_{\gamma \gamma}\hat{v}_p^{(\frac{k-1}{3})} \\
    &-\sum\limits_{j=3}^{k-3}\partial_x\hat{v}_p^{(\frac{k-j}{3})}\sum\limits_{i=3}^{j}\frac{\gamma^{j-i}}{(j-i)!}\partial_y^{j-i}u_e^{(\frac{i}{3})}(x,0) \\
    &-\sum\limits_{j=3}^{k-3}\hat{u}_p^{(\frac{k-j}{3})}\sum\limits_{i=3}^{j}\frac{\gamma^{j-i}}{(j-i)!}\partial_x\partial_y^{j-i}v_e^{(\frac{i}{3})}(x,0)-\sum\limits_{j=3}^{k-3}\hat{u}_p^{(\frac{j}{3})}\partial_x\hat{v}_p^{(\frac{k-j}{3})} \\
    &-\sum\limits_{j=3}^{k-2}\partial_{\gamma} \hat{v}_p^{(\frac{k-j+1}{3})}\sum\limits_{i=3}^{j}\frac{\gamma^{j-i}}{(j-i)!}\partial_y^{j-i}v_e^{(\frac{i}{3})}(x,0) \\
    &-\sum\limits_{j=3}^{k-3} \hat{v}_p^{(\frac{k-j}{3})}\sum\limits_{i=3}^{j}\frac{\gamma^{j-i}}{(j-i)!}\partial_y^{j-i+1}v_e^{(\frac{i}{3})}(x, 0)-\sum\limits_{j=3}^{k-2}\hat{v}_p^{(\frac{j}{3})}\partial_{\gamma} \hat{v}_p^{(\frac{k-j+1}{3})}.
\end{aligned}
\]

Direct computer gives that $\hat{f}_{p,k}(x, -\xi)=f_{p,k}(x, \xi),~~-\hat{g}_{p,k}(x,-\xi)=g_{p,k}(x, \xi)$.
\subsubsection{The solvability of $(u_e^{(\frac{k}{3})}, v_e^{(\frac{k}{3})})$, $(u_p^{(\frac{k}{3})}, v_p^{(\frac{k+1}{3}})$ and $(\hat{u}_p^{(\frac{k}{3})}, \hat{v}_p^{(\frac{k+1}{3})})$} Since their solvability analysis is analogous to Proposition \ref{existence of Euler-1} and Proposition \ref{Existence of boundary layer-1}, we omit the details and proceed directly to the results. 
\begin{proposition} \label{Existence of higher order Euler}
    The Euler equations (\ref{equek}) has a solution $ (u_e^{(\frac{k}{3})}, v_e^{(\frac{k}{3})}) $ which satisfies 
    \begin{align} \label{p284}
    \begin{aligned}
        &\|\partial_x^j \partial_y^m (u_e^{(\frac{k}{3})}, v_e^{(\frac{k}{3})})\|_{L^2} \leq C_{j,m},~~\forall j,m \geq 0, \\
        &\int_{0}^{2\pi} v_e^{(\frac{k}{3})}(x,y) \mathrm{d}x = 0, ~~\forall y \in [0,1],\\
        &u_e^{(\frac{k}{3})}(x, y) = u_e^{(\frac{k}{3})}(x, 1-y),~~v_e^{(\frac{k}{3})}(x, y) = -v_e^{(\frac{k}{3})}(x, 1-y).
    \end{aligned}
    \end{align}
\end{proposition}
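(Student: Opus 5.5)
Proof proposal. I would argue by induction on $k\ge4$, following the scheme of Proposition \ref{existence of Euler-1}. The induction hypothesis is that all lower-order Euler terms $(u_e^{(\frac{i}{3})},v_e^{(\frac{i}{3})})$, $3\le i\le k-1$, satisfy (\ref{p284}). The upper and lower boundary layer terms up to $v_p^{(\frac{k}{3})},\hat v_p^{(\frac{k}{3})}$ are also assumed known; they satisfy the weighted bounds of type (\ref{p281})--(\ref{p282}), have zero $x$-mean, and obey the reflection relation $v_p^{(\frac{k}{3})}(x,-\gamma)=-\hat v_p^{(\frac{k}{3})}(x,\gamma)$, as in (\ref{addition}).

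\textbf{Symmetry of the data.} From the parity of the lower-order terms, a direct check on each product term shows $f_e^{(k)}(x,y)=f_e^{(k)}(x,1-y)$ and $g_e^{(k)}(x,y)=-g_e^{(k)}(x,1-y)$. The boundary data $b_1(x):=-v_p^{(\frac{k}{3})}(x,0)$ and $b_0(x):=-\hat v_p^{(\frac{k}{3})}(x,0)$ satisfy $b_0=-b_1$ and have zero $x$-mean.

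\textbf{Lifting the boundary data.} I lift the data with $V_b(x,y):=b_0(x)\chi(y)-b_1(x)\chi(1-y)$, where $\chi$ is a smooth cutoff equal to $1$ near $0$ and supported in $[0,\frac14]$. This $V_b$ is odd about $y=\frac12$ and smooth.

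\textbf{Eliminating the pressure.} Eliminating $p_e^{(\frac{k}{3})}$ as in (\ref{eliminate pressure}) gives
\[
-u_e\Delta v_e^{(\frac{k}{3})}-2\alpha v_e^{(\frac{k}{3})}=\partial_yf_e^{(k)}-\partial_xg_e^{(k)}=:\tilde F_k .
\]
Writing $v_e^{(\frac{k}{3})}=V_b+w$, the remainder $w$ vanishes on $y=0,1$, is odd, and solves
\[
-\Delta w-\frac{2w}{y(1-y)}=\frac{\tilde F_k+u_e\Delta V_b+2\alpha V_b}{u_e}.
\]

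\textbf{Main obstacle.} The difficulty is that the right-hand side must lie in the dual of $\tilde H(\Omega)$, which requires controlling the singularity $1/u_e$ at the walls. Unlike the case $k=3$, the forcing $F$ is not compactly supported here; the data contain derivatives of $u_e^{(\frac{i}{3})}$ that do not vanish at the walls, and $V_b$ gives the term $2\alpha V_b/u_e$.

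I would handle this in two steps. First, I use the Hardy inequality (\ref{hardy2}): for $w\in H_0^1$, $\|w/y\|_{L^2}\le2\|\partial_yw\|_{L^2}$. Hence any function $G$ with $G\in L^2$ defines a bounded functional $w\mapsto\int (G/u_e)w$ on $\tilde H(\Omega)$. Second, the Lax--Milgram argument with the coercivity (\ref{coercity}) gives a unique odd $w\in\tilde H(\Omega)$ with $\|w\|_{H^1}\le C$.

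Higher regularity follows as before. I first differentiate in $x$ (all coefficients are $x$-independent), then recover $\partial_y^m$ from the equation by induction. To do this I solve for $\partial_{yy}w$ and check that the singular quotients have bounded $L^2$ norms. This needs a compatibility condition at $y=0,1$: the numerator $\tilde F_k+u_e\Delta V_b+2\alpha V_b+2\alpha w$ must vanish to suitable order there. If that fails at high order, I would instead modify the lift, choosing the $y$-Taylor coefficients of $V_b$ at the walls so that the singular terms cancel order by order (Frobenius-type matching). This is the step I expect to be delicate.

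\textbf{Recovering $u_e^{(\frac{k}{3})}$ and the pressure.} The zero-mean property $\int_0^{2\pi}v_e^{(\frac{k}{3})}dx=0$ follows by integrating the vorticity equation in $x$. The $x$-averaged equation reads $-u_e\partial_{yy}\bar v-2\alpha\bar v=\int\tilde F_k\,dx$. The right-hand side vanishes because of the choice of the mean $Z$ made at the previous step, as in Remark \ref{remark2.2}, and the boundary data of $\bar v$ are zero, so uniqueness gives $\bar v\equiv0$. I then define
\[
u_e^{(\frac{k}{3})}=-\int_0^x\partial_yv_e^{(\frac{k}{3})}\,d\bar x+Z_k(y),
\]
which is periodic because $\bar v=0$. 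The mean $Z_k$ is even and is fixed by the analogue of (\ref{add condition}), so that the next-order solvability condition holds. The pressure is $p_e^{(\frac{k}{3})}=\Phi_k(y)+\int_0^x(\cdots)$ with $\Phi_k$ even, exactly as before. Evenness of $u_e^{(\frac{k}{3})}$ follows from the oddness of $v_e^{(\frac{k}{3})}$ and the evenness of $Z_k$. The $L^2$ bounds on all derivatives follow from those on $v_e^{(\frac{k}{3})}$.
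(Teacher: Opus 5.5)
Your overall route is the one sketched in the paper's remark: eliminate the pressure, apply Lax--Milgram via (\ref{hardy for odd function}), recover $u_e^{(\frac{k}{3})}$ with $Z_k$ fixed by next-order solvability, and track parity. There is, however, a genuine gap exactly at the step you flagged as delicate, and the repair you propose cannot close it.

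\textbf{Changing the lift does nothing.} By (\ref{coercity}) on odd functions, $v_e^{(\frac{k}{3})}=V_b+w$ is uniquely determined by $\tilde F_k$ and the wall data. Any change in the Taylor coefficients of $V_b$ is simply absorbed into $w$, so the regularity of $v_e^{(\frac{k}{3})}$ is unaffected.

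\textbf{The obstruction is at order zero, not at high order.} Near $y=0$ the operator $-u_e\Delta-2\alpha$ has indicial roots $0$ and $1$. Matching powers of $y$, the $y^m$ balance for $m\ge1$ can always be solved for the $y^{m+1}$ coefficient, which enters with the nonzero factor $-\alpha m(m+1)$. The $y^0$ balance, however, reads $-2\alpha\,v_e^{(\frac{k}{3})}(x,0)=\tilde F_k(x,0)$ for every solution with $y\,\partial_{yy}v_e^{(\frac{k}{3})}\to0$.
\begin{itemize}
\item In your notation this is the vanishing of $N_0(x):=\tilde F_k(x,0)+2\alpha b_0(x)$, which is pinned by the boundary condition.
\item If $N_0\not\equiv0$, then $v_e^{(\frac{k}{3})}=b_0-\alpha^{-1}N_0\,y\log y+O(y)$. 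So $\partial_{yy}v_e^{(\frac{k}{3})}\notin L^2$ and $\partial_yv_e^{(\frac{k}{3})}$ grows like $\log y$.
\item Consequently $u_e^{(\frac{k}{3})}=-\int_0^x\partial_yv_e^{(\frac{k}{3})}\,d\bar x+Z_k$ is unbounded at the wall, because $\int_0^xN_0\,d\bar x$ is periodic and not identically zero.
\item Its trace, which is needed as boundary datum for $\hat u_p^{(\frac{k}{3})}$, does not exist.
\end{itemize}

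This already happens at $k=4$. There $f_e^{(4)}=g_e^{(4)}=0$, so the condition becomes $\hat v_p^{(\frac{4}{3})}(x,0)=0$. Evaluating (\ref{eqhup3}) at $\gamma=0$, where $\hat p_p^{(\frac{4}{3})}\equiv0$, gives $\alpha\hat v_p^{(\frac{4}{3})}(x,0)=\partial_\gamma^2\hat u_p^{(1)}(x,0)$. Mode by mode, $\partial_\gamma\hat u_p^{(1)}$ is a decaying Airy function $c_n\mathrm{Ai}((in\alpha)^{1/3}\gamma)$. Since $\mathrm{Ai}'(0)\neq0$ and $\int_0^\infty\mathrm{Ai}=\frac{1}{3}$, this quantity is nonzero as soon as $u_e^{(1)}(x,0)$ depends on $x$. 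That is the case whenever $\partial_yv_e^{(1)}(x,0)\not\equiv0$, which is generic: otherwise $v_e^{(1)}$ would vanish identically between the wall and the support of $F$.

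The paper does not address this either. It asserts well-posedness ``analogously to Proposition \ref{existence of Euler-1}'', but there the wall data vanish and $F$ is compactly supported, so the compatibility holds trivially. In the Couette setting of \cite{FPZ}, the term $u_e''v$ is absent and the wall value is unconstrained. To finish, you would need one of two things:
\begin{itemize}
\item derive $2\alpha\hat v_p^{(\frac{k}{3})}(x,0)=\tilde F_k(x,0)$ from the structure of the expansion, which fails at $k=4$; or
\item change the ansatz, e.g.\ admit $y\log y$ terms in the outer expansion with corresponding $\log\epsilon$ terms in the matching, which changes the statement itself.
\end{itemize}

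A minor slip: as written, $V_b(x,1)=-b_1$, and $V_b$ is even rather than odd because $b_0=-b_1$. You want $V_b=b_0\chi(y)+b_1\chi(1-y)$.
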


\begin{remark}
    Here, we provide the construction of $ (u_e^{(\frac{k}{3})},v_e^{(\frac{k}{3})},p_e^{(\frac{k}{3})}). $ We make an inductive hypothesis that for $i=3,...,k-1$,Propsition \ref{Existence of higher order Euler} and Proposition \ref{Existence of higher order boundary layer}  hold. Moreover, we assume that
    $$\frac{1}{2\pi}\int_{0}^{2\pi}f_e^{(i)}\mathrm{d}x=C_i,i=6,...,k+2.$$
    Eliminating $p_e^{(\frac{k}{3})}$ from (\ref{equpk}), we obtain that
    \[
    \left \{
    \begin{aligned}
    	&-u_e(y)\triangle v_e^{(\frac{k}{3})}+u''_e(y)v_e^{(\frac{k}{3})}=\partial_yf_e^{(k)}-\partial_xg_e^{(k)}, \\
    	&v_e^{(\frac{k}{3})}(x, y)=v_e^{(\frac{k}{3})}(x+2\pi, y), \\
    	&v_e^{(\frac{k}{3})}|_{y=1}=-v_p^{(\frac{k}{3})}|_{\xi=0},~v_e^{(\frac{k}{3})}|_{y=0}=-\hat{v}_p^{(\frac{k}{3})}|_{\gamma=0}.
    \end{aligned}
    \right .
    \]
    Analogously to Proposition \ref{existence of Euler-1}, we can prove the well-posedness. Then we define
    \[u_e^{(\frac{k}{3})}(x,y)=-\int_{0}^{x}\partial_yv_e^{(\frac{k}{3})}(\bar{x},y)\mathrm{d}\bar{x}+Z_k(y).\]
    where $Z_k(y)$ satisifies $Z_k(1-y)=Z_k(y)$ and
    \[
    \begin{aligned}
    &2\pi Z_k'''(y)-\sum_{i=3}^{k}\int_{0}^{2\pi}\partial_{y}^2v_e^{(\frac{k+3-i}{3})}(x,y)\int_{0}^{x}\partial_yv_e^{(\frac{i}{3})}(\bar{x},y)\mathrm{d}\bar{x}\mathrm{d}x \\
    +&\sum_{i=3}^{k}\int_{0}^{2\pi}v_e^{(\frac{i}{3})}(x,y)\int_{0}^{x}\partial_{y}^3v_e^{(\frac{k+3-i}{3})}(\bar{x},y)\mathrm{d}\bar{x}\mathrm{d}x-\int_{0}^{2\pi}\int_{0}^{x}\partial_y^4v_e^{(\frac{k}{3})}(\bar{x},y)\mathrm{d}\bar{x}\mathrm{d}x=0.
    \end{aligned}
    \]
    Analogously to Remark \ref{remark2.2}, we can prove that $ \frac{1}{2\pi}\int_{0}^{2\pi}f_e^{(k+3)}\mathrm{d}x=C_{k+3}. $ Here, we omit the detailed proof. After obtaining $ (u_e^{(\frac{k}{3})}, v_e^{(\frac{k}{3})}) $, we construct $ p_e^{(\frac{k}{3})} $ as follows
    \[
    p_e^{(\frac{k}{3})}=W_k(y)-u_e(y)u_e^{(\frac{k}{3})}-u'_e(y)\int_{0}^{x}v_e^{(\frac{k}{3})}(\bar{x}, y)\mathrm{d}\bar{x}+\int_{0}^{x}f_e^{(k)}(\bar{x}, y)\mathrm{d}\bar{x}.
    \]
    So, it satisfies
    \[    u_e(y)\partial_xu_e^{(\frac{k}{3})}+v_e^{(\frac{k}{3})}u'_e(y)+\partial_xp_e^{(\frac{k}{3})}=f_e^{(k)}.
    \]
    We let $W_k(y)$ satisfy
    \[
    W_k'(y)-u'_e(y)u_e^{(\frac{k}{3})}(0, y)+u_e(y)\partial_xv_e^{(\frac{k}{3})}(0, y)-u_e(y)\partial_yu_e^{(\frac{k}{3})}(0, y)=g_e^{(k)}(0, y).
    \]
    Combining the properties of $ (u_e^{(\frac{k}{3})}, v_e^{(\frac{k}{3})}) $, it follows that
    \[
    u_e(y)\partial_xv_e^{(\frac{k}{3})}+\partial_yp_e^{(\frac{k}{3})}=g_e^{(k)}.
    \]
    At last, we have for $ k\ge 6 $
    \begin{equation} \label{pek}
    p_e^{(\frac{k}{3})}=\tilde{p}_e^{(\frac{k}{3})}+C_kx,
    \end{equation}
    where $ \tilde{p}_e^{(\frac{k}{3})} $ is periodic in x.
\end{remark}

\begin{proposition} \label{Existence of higher order boundary layer}
    The boundary layer equations (\ref{equpk}) and (\ref{eqhupk}) have a solution $ (u_p^{(\frac{k}{3})}, v_p^{(\frac{k+1}{3})}) $ and $(\hat{u}_p^{(\frac{k}{3})}, \hat{v}_p^{(\frac{k+1}{3})})$ which satisfies
    \begin{align} \label{p285}
    \begin{aligned}
       & \int_{-\infty}^{0} \int_{0}^{2\pi} |\partial_x^j\partial_{\xi}^m(u_p^{(\frac{k}{3})}-A_{1,\frac{k}{3}}, v_p^{(\frac{k+1}{3})})|^2 \xi^{2l} \mathrm{d}x \mathrm{d}\xi \le C_{j,m,l}, \quad \forall j,m,l\in \mathbb{N}, \\
       &\int_{0}^{2\pi} v_p^{(\frac{k+1}{3})}(x, \xi) \mathrm{d}x=0, \quad \forall \xi \le 0,
    \end{aligned}
    \end{align}
    and
    \begin{align} \label{p286}
    \begin{aligned}
       & \int_{0}^{+\infty} \int_{0}^{2\pi} |\partial_x^j\partial_{\gamma}^m(\hat{u}_p^{(\frac{k}{3})}-A_{2, \frac{k}{3}}, \hat{v}_p^{(\frac{k+1}{3})})|^2 \gamma^{2l} \mathrm{d}x \mathrm{d}\gamma \le C_{j,m,l}, \quad \forall j,m,l\in \mathbb{N}, \\
       &\int_{0}^{2\pi} \hat{v}_p^{(\frac{k+1}{3})}(x, \gamma) \mathrm{d}x=0, \quad \forall \gamma \ge 0,
    \end{aligned}
    \end{align}
    \noindent where $A_{1, \frac{k}{3}}=\lim\limits_{\xi \to -\infty} u_p^{(\frac{k}{3})}(x, \xi),~~A_{2, \frac{k}{3}}=\lim\limits_{\gamma \to +\infty} \hat{u}_p^{(\frac{k}{3})}(x, \gamma)$ are two constants. \\
   Furthermore, for $(x,\xi)\in [0,2\pi]\times (-\infty,0]$, there holds
     $$u_p^{(\frac{k+1}{3})}(x, \xi)=\hat{u}_p^{(\frac{k+1}{3})}(x,-\xi), ~~v_p^{(\frac{k+1}{3})}(x, \xi)=-\hat{v}_p^{(\frac{k+1}{3})}(x, -\xi).$$
\end{proposition}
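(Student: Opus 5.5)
We argue by induction on $k\ge4$, assuming Propositions \ref{Existence of higher order Euler} and \ref{Existence of higher order boundary layer} at all lower orders, together with the symmetry relations between upper and lower layers. At order $k$, the coefficients entering $f_{p,k}$, $g_{p,k}$, $\hat f_{p,k}$, $\hat g_{p,k}$ are all known. They consist of lower-order layers with weighted $L^2$ bounds, multiplied by polynomials in $\xi$ (resp.\ $\gamma$) and by Taylor coefficients of the Euler terms at $y=1$ (resp.\ $y=0$), which are bounded by Proposition \ref{Existence of higher order Euler}. Hence these forcings have rapid decay: $\xi^{l}\partial_x^j\partial_\xi^m f_{p,k}\in L^2$ for all $j,m,l$.

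The first step is to solve the pressure. I set $p_p^{(\frac{k+1}{3})}(x,\xi)=-\int_\xi^{-\infty}\! g_{p,k}\,d\bar\xi$, i.e.\ integrate $\partial_\xi p = g_{p,k}$ from $-\infty$, which enforces the decay condition. I then move $\partial_x p_p^{(\frac{k+1}{3})}$ to the right-hand side. This leaves a linear problem of exactly the type (\ref{equp3}):
\[
-\alpha\xi\partial_x u-\alpha v-\partial_{\xi\xi}u=\tilde f,\qquad \partial_x u+\partial_\xi v=0,
\]
with $u|_{\xi=0}=-u_e^{(\frac{k}{3})}(x,1)$ and the right-hand side $\tilde f$ rapidly decaying.

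Next I decompose in Fourier modes in $x$. For the zero mode, $\partial_\xi v_0=0$ together with $v\to0$ gives $v_0=0$, which yields the mean-zero property in (\ref{p285}). The zero mode of $u$ then solves $-\partial_{\xi\xi}u_0=\tilde f_0$, which is solvable with $u_0\to A_{1,\frac k3}$ provided $\tilde f_0$ has finite first moment. The constant $A_{1,\frac k3}$ is precisely what the remark after Proposition \ref{Existence of boundary layer-1} absorbs into the Euler term via $Q(y)$.

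For nonzero modes I work with the stream function. Writing $v=-\partial_x\psi$ (so $u=\partial_\xi\psi$, consistent with $\partial_x u+\partial_\xi v=0$), the mode $\psi_n$ satisfies an Airy-type ODE,
\[
-\psi_n'''-i n\alpha\xi\psi_n'+in\alpha\psi_n=\tilde f_n .
\]
I would invoke the well-posedness and weighted estimates of Proposition 4.6 in \cite{FPZ}, exactly as in Proposition \ref{Existence of boundary layer-1}. The only changes are a nonzero forcing and an inhomogeneous Dirichlet datum for $u$. The datum can be lifted by a cutoff $\chi(\xi)u_e^{(\frac k3)}(x,1)$, moving the extra terms into the forcing. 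Weighted energy estimates with $\xi^{2l}$ weights, followed by commuting $\partial_x^j$ and bootstrapping $\partial_\xi^m$ through the equation, give (\ref{p285}). The lower layer is handled identically and gives (\ref{p286}).

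The symmetry statement is proved as in Proposition \ref{Existence of boundary layer-1}. Define $\breve u(x,\xi)=\hat u_p^{(\frac k3)}(x,-\xi)$, $\breve v(x,\xi)=-\hat v_p^{(\frac{k+1}{3})}(x,-\xi)$, and $\breve p(x,\xi)=\hat p_p^{(\frac{k+1}{3})}(x,-\xi)$. Using $\hat f_{p,k}(x,-\xi)=f_{p,k}(x,\xi)$ and $-\hat g_{p,k}(x,-\xi)=g_{p,k}(x,\xi)$, the triple $(\breve u,\breve v,\breve p)$ solves the upper problem with forcing $(f_{p,k},g_{p,k})$. The boundary data agree because $u_e^{(\frac k3)}(x,0)=u_e^{(\frac k3)}(x,1)$, which follows from the evenness in Proposition \ref{Existence of higher order Euler}. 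Uniqueness within the decaying class, which comes from the energy estimate for the homogeneous problem, then yields the identity.

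The main obstacle is this uniqueness step, together with verifying that the forcing identities $\hat f_{p,k}(x,-\xi)=f_{p,k}(x,\xi)$ and $-\hat g_{p,k}(x,-\xi)=g_{p,k}(x,\xi)$ really propagate. Doing so requires the parity relations of every lower-order layer, including the $\xi\leftrightarrow-\gamma$ sign changes in the Taylor terms, since $\partial_y^{m}u_e(x,1)=(-1)^m\partial_y^mu_e(x,0)$. I would check this term by term inductively.
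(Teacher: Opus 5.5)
Your proposal is correct and follows essentially the paper's route. The paper defers well-posedness to Proposition 4.6 of \cite{FPZ}. It then obtains the symmetry by the reflection argument of Proposition \ref{Existence of boundary layer-1}, using $u_e^{(\frac{k}{3})}(x,0)=u_e^{(\frac{k}{3})}(x,1)$, the identities $\hat f_{p,k}(x,-\xi)=f_{p,k}(x,\xi)$ and $-\hat g_{p,k}(x,-\xi)=g_{p,k}(x,\xi)$, and an implicit uniqueness. You additionally spell out the pressure integration, the zero mode, and the parity check of the forcings, which does go through term by term, and you rightly read the statement's $u_p^{(\frac{k+1}{3})}$ as the intended $u_p^{(\frac{k}{3})}$. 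The uniqueness you flag does follow from an energy identity: testing the homogeneous problem against $u_x$ gives $\alpha\int|\xi|u_x^2\,\mathrm{d}x\,\mathrm{d}\xi+\frac{\alpha}{2}\int_0^{2\pi}v(x,0)^2\,\mathrm{d}x=0$, so $u_x\equiv0$ and $v\equiv0$, and then the zero mode vanishes because $-u_0''=0$, $u_0(0)=0$ and $u_0$ is bounded.
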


\begin{remark}
    Since we want the solution of the boundary layer equations decaying fast at infinity, we define for $k\ge 4$,
    \[
    \tilde{u}_p^{(\frac{k}{3})}:=u_p^{(\frac{k}{3})}-A_{1,\frac{k}{3}},~~\tilde{\hat{u}}_p^{(\frac{k}{3})}:=\hat{u}_p^{(\frac{k}{3})}-A_{2,\frac{k}{3}},
    \]
    then we use $\tilde{u}_p^{(\frac{k}{3})}$ and $\tilde{\hat{u}}_p^{(\frac{k}{3})}$ to replace the original $u_p^{(\frac{k}{3})}$ and $\hat{u}_p^{(\frac{k}{3})}$ respectively. \\
    \indent We need to correct $u_e^{(\frac{k}{3})}$ to $\tilde{u}_e^{(\frac{k}{3})}$ such that
    \[
    \begin{aligned}
        &\tilde{u}_e^{(\frac{k}{3})}|_{y=1}=u_e^{(\frac{k}{3})}|_{y=1}+A_{1,\frac{k}{3}}, \\
        &\tilde{u}_e^{(\frac{k}{3})}|_{y=0}=u_e^{(\frac{k}{3})}|_{y=0}+A_{2,\frac{k}{3}},
    \end{aligned}
    \]
    and $(\tilde{u}_e^{(\frac{k}{3})},v_e^{(\frac{k}{3})})$ is still a solution of the equations (\ref{equek}). \\
    \indent Let
    \[
    \tilde{u}_e^{(\frac{k}{3})}=u_e^{(\frac{k}{3})}+Q^{(k)}(y),
    \]
    where $Q^{(k)}(y) \in C^{\infty}([0,1])$ and it satisfies $Q^{(k)}(1)=A_{1,\frac{k}{3}}$, $Q^{(k)}(0)=A_{2,\frac{k}{3}}$. Then we have
    \begin{enumerate}
        \item[$\bullet$] $(\tilde{u}_e^{(\frac{k}{3})},v_e^{(\frac{k}{3})})$ is still a solution of the equations (\ref{equek});
        \item[$\bullet$] $\tilde{u}_e^{(\frac{k}{3})}|_{y=1}+\tilde{u}_p^{(\frac{k}{3})}|_{\xi=0}$=0,~~$\tilde{u}_e^{(\frac{k}{3})}|_{y=0}+\tilde{\hat{u}}_p^{(\frac{k}{3})}|_{\gamma=0}$=0.
    \end{enumerate}
    Then we use $\tilde{u}_p^{(\frac{k}{3})}$, $\tilde{\hat{u}}_p^{(\frac{k}{3})}$, $\tilde{u}_e^{(\frac{k}{3})}$ to be the new $u_p^{(\frac{k}{3})}$, $\hat{u}_p^{(\frac{k}{3})}$, $u_e^{(\frac{k}{3})}$ to enter the subsequent round iteration.
\end{remark}

\subsection{Construction of approximate solutions}  \label{Approximate solutions}
In this subsection, we construct an approximate solution of the system (\ref{NS equation}). Let $ \chi(y) \in C_c^{\infty}([0, +\infty)) $ be a cut-off function satisfies
\[
\chi(y)+\chi(1-y)=1,~~~~
\chi(y) = \left \{
\begin{aligned}
    &1, ~~y \in \Big[0, \frac{1}{4}\Big), \\
    &0, ~~y \in \Big[\frac{3}{4},1\Big]
\end{aligned}
\right .
\]
Set
\[
\begin{aligned}
    &\tilde{u}_p^a:=(1-\chi(y))\sum\limits_{k=3}^{15}\epsilon^{\frac{k}{3}}u_p^{(\frac{k}{3})}+\chi(y)\sum\limits_{k=3}^{15}\epsilon^{\frac{k}{3}}\hat{u}_p^{(\frac{k}{3})}:=(1-\chi(y))u_p^a+\chi(y)\hat{u}_p^a, \\
    &\tilde{v}_p^a:=(1-\chi(y))\sum\limits_{k=3}^{15}\epsilon^{\frac{k+1}{3}}v_p^{(\frac{k+1}{3})}+\chi(y)\sum\limits_{k=3}^{15}\epsilon^{\frac{k+1}{3}}\hat{v}_p^{(\frac{k+1}{3})}:=(1-\chi(y))v_p^a+\chi(y)\hat{v}_p^a, \\
    &\tilde{p}_p^a:=(1-\chi(y))^2\sum\limits_{k=3}^{15}\epsilon^{\frac{k+1}{3}}p_p^{(\frac{k+1}{3})}
    +\chi^2(y)\sum\limits_{k=3}^{15}\epsilon^{\frac{k+1}{3}}\hat{p}_p^{(\frac{k+1}{3})}:=(1-\chi(y))^2p_p^a+\chi^2(y)\hat{p}_p^a,
\end{aligned}
\]
and
\[
\begin{aligned}
    u_e^a := u_e(y)+\sum\limits_{k=3}^{15}\epsilon^{\frac{k}{3}}u_e^{(\frac{k}{3})}, ~~
    v_e^a := \sum\limits_{k=3}^{15}\epsilon^{\frac{k}{3}}v_e^{(\frac{k}{3})}, ~~
    p_e^a := \sum\limits_{k=3}^{15}\epsilon^{\frac{k}{3}}p_e^{(\frac{k}{3})}.
\end{aligned}
\]
We construct an approximate solution $(u^a, v^a, p^a)$ by
\begin{align} \label{construction approximate solution}
\begin{aligned}
    &u^a(x, y):=u_e^a+\tilde{u}_p^a+\epsilon^5 h(x, y), \\[5pt]
    &v^a(x, y):=v_e^a+\tilde{v}_p^a, \\[5pt]
    &p^a(x, y):=p_e^a+\tilde{p}_p^a=\tilde{p}^a+x\sum\limits_{k=3}^{15}\epsilon^{\frac{k}{3}}C_k,
\end{aligned}
\end{align}
where $\tilde{p}^a$ is periodic in $x$ and the corrector $h(x, y)$ will be given in Appendix, which satisfies
\[
h(x, y)=h(x, 1-y),~~h(x, 0) =h(x, 1) = 0,~~\|\partial_x^j\partial_y^kh\|_{L^2} \leq C_{j,k}\epsilon^{-\frac{k}{3}},
\]
and makes $(u^a, v^a)$ be divergence-free
\[
\partial_x u^a+\partial_yv^a =0.
\]
Combining the Proposition \ref{existence of Euler-1}-Proposition \ref{Existence of higher order boundary layer}, we obtain
\begin{align} \label{uasymmetric}
    u^a(x, y)=u^a(x, 1-y),~~v^a(x, y)=-v^a(x, 1-y)
\end{align}
and
\begin{align} \label{estimate on approximate solution}
&\|\partial_x^j\partial_y^k\big(u_e^a-u_e(y)\big)\|_{L^{\infty}}\leq C_{j,k}\epsilon,~~\|\partial_x^j\partial_y^kv_e^a\|_{L^{\infty}}\leq C_{j,k}\epsilon, \nonumber\\
&\|\xi^l\partial_x^j\partial_y^ku_p^a\|_{L^{\infty}}\leq C_{j,k,l}\epsilon, ~~ \|\xi^l\partial_x^j\partial_y^kv_p^a\|_{L^{\infty}}\leq C_{j,k,l}\epsilon^{\frac{4}{3}}, \nonumber\\
&\|\gamma^l\partial_x^j\partial_y^k\hat{u}_p^a\|_{L^{\infty}}\leq C_{j,k,l}\epsilon, ~~ \|\gamma^l\partial_x^j\partial_y^k\hat{v}_p^a\|_{L^{\infty}}\leq C_{j,k,l}\epsilon^{\frac{4}{3}}. 
\end{align}
Finally, set
\[
\begin{aligned}
    &R_u^a:=u^a\partial_xu^a+v^a\partial_yu^a+\partial_xp^a-\epsilon \triangle u^a-\epsilon F_u, \\
    &R_v^a:=u^a\partial_xv^a+v^a\partial_yv^a+\partial_yp^a-\epsilon \triangle v^a-\epsilon F_v,
\end{aligned}
\]
then there holds
\[
\|R_u^a\|_{L^2}+\|\partial_xR_u^a\|_{L^2}\leq C\epsilon^5,~~\|R_v^a\|_{L^2}+\|\partial_xR_v^a\|_{L^2}\leq C\epsilon^5,
\]
and $(u^a, v^a, p^a)$ satisfies
\[
\left \{
\begin{aligned}
    &u^a\partial_xu^a+v^a\partial_yu^a+\partial_xp^a-\epsilon\triangle u^a=\epsilon F_u+R_u^a, ~(x,y)\in \mathbb{T} \times [0, 1],\\
    &u^a\partial_xv^a+v^a\partial_yv^a+\partial_yp^a-\epsilon\triangle v^a=\epsilon F_v+R_v^a, ~(x,y)\in \mathbb{T} \times [0, 1],\\
    &\partial_xu^a+\partial_yv^a=0, ~(x,y)\in \mathbb{T} \times [0, 1],\\
    &u^a(x, y)=u^a(x+2\pi, y), v^a(x, y)=v^a(x+2\pi, y), ~(x,y)\in \mathbb{T} \times [0, 1],\\
    &u^a(x, 1)=0, v^a(x, 1)=0, ~ x\in [0, 2\pi],\\
    &u^a(x, 0)=0, v^a(x, 0)=0, ~ x\in [0, 2\pi], \\
    &u^a(x, y)=u^a(x, 1-y),~v^a(x, y)=-v^a(x, 1-y).
\end{aligned}
\right .
\]

\section{Linear stability estimate for error equations} \label{sec3}
In this section, we derive the error equations and establish the linear stability estimate for the error equations.
\subsection{Error equations}In this subsection, the error equation is derived in both its velocity form and stream function form.
\subsubsection{Velocity formulation of error equation}Set the error by $ u:=u^{\varepsilon}-u^a,  v:=v^{\varepsilon}-v^a, p:=p^{\varepsilon}-p^a$. Then there holds
\begin{align} \label{error}
    \left \{
    \begin{aligned}
        &-\epsilon\triangle u+\partial_xp+S_u=R_u, ~(x,y)\in \mathbb{T} \times [0, 1],\\
        &-\epsilon\triangle v+\partial_yp+S_v=R_v, ~(x,y)\in \mathbb{T} \times [0, 1],\\
        &\partial_xu+\partial_yv=0, ~(x,y)\in \mathbb{T} \times [0, 1],\\
        &u(x, y)=u(x+2\pi, y), v(x, y)= v(x+2\pi, y), ~(x,y)\in \mathbb{T} \times [0, 1],\\
        &u(x, 1)=0, v(x, 1)=0, ~ x\in [0, 2\pi],\\
        &u(x, 0)=0, v(x, 0)=0, ~ x\in [0, 2\pi],
    \end{aligned}
    \right .\
\end{align}
where the convection terms are given by
\[
\begin{aligned}
    &S_u:=u^a\partial_xu+v^a\partial_yu+u\partial_xu^a+v\partial_yu^a,
    \\    &S_v:=u^a\partial_xv+v^a\partial_yv+u\partial_xv^a+v\partial_yv^a,
\end{aligned}
\]
and the remainders are defined by
\[
\begin{aligned}
    R_u:=-R_u^a-u\partial_xu-v\partial_yu, ~~
    R_v:=-R_v^a-u\partial_xv-v\partial_yv.
\end{aligned}
\]

\begin{Remark}We seek the pressure correction $p=p^\epsilon-p^a$ in the
class of $2\pi$-periodic functions in $x$. Thus, the exact
pressure and the approximate pressure have the same linear
part:
\[
    p^\epsilon
    =
    \bigl(\widetilde p^a+p\bigr)+\Pi_\epsilon x,
    \qquad
    \Pi_\epsilon
    =
    \sum_{k=3}^{15}\epsilon^{k/3}C_k,
\]
where $\widetilde p^a+p$ is $2\pi$-periodic in $x$.
\end{Remark}

\subsubsection{Stream function formulation of error equation} Here we give a reformulation of system (\ref{error}) in the stream function. \\
\indent 
Taking
\[
\phi(x, y) = \int_{1}^{y} u(x, \bar{y}) \mathrm{d}\bar{y},
\]
then $\phi$ is $2\pi$-periodic function in $x$ and
\[
\bigtriangledown^{\bot} \phi = (\partial_y, -\partial_x)\phi = (u, v).
\]
The boundary condition  $v|_{y=0}=0$ indicates that
\[
\partial_x \phi(x,0)=0,
\]
which indicates that $\phi(x,0)=$ constant. Thus, the stream-function formulation of error equation is
\begin{align} \label{vorticity equation}
    \left \{
    \begin{aligned}
        &-\epsilon \triangle^2 \phi+(u^a\partial_x+v^a\partial_y)w+(u\partial_x+v\partial_y)\omega^a=\partial_xR_v-\partial_yR_u, \\[3pt]
        & u=\partial_y \phi,~~ v=-\partial_x \phi, ~~ w=\triangle \phi, \\[3pt]
        &\phi|_{y=1}=0,~~\phi(x,0)=constant,~~\partial_y \phi|_{y=0,1}=0,~~\phi(x,y)=\phi(x+2\pi, y),
    \end{aligned}
    \right .
\end{align}
where $\omega^a:= \partial_xv^a-\partial_yu^a$, and we have used the following fact
\[
\partial_xS_v-\partial_yS_u=(u^a\partial_x+v^a\partial_y)\omega+(u\partial_x+v\partial_y)\omega^a.
\]

\subsection{Linear stability estimate of error equation (\ref{error})} Now, we provide the main estimate for the linearized Navier-Stokes system (\ref{error}). Define the $x$-mean of a function $f(x,y)$ by
    \[
    f_0 = \frac{1}{2\pi} \int_{0}^{2\pi} f(x, y)\mathrm{d}x
    \]
    and $\bar{f}=f-f_0$.
    
    We begin by collecting several estimates on $(u^a, v^a)$, which will be needed repeatedly in what follows.
  \begin{lemma} \label{estimate on app solution}
    There exists $\epsilon_0>0$ such that for any $\epsilon\in (0, \epsilon_0)$, there holds
    \begin{align} \label{estimate on ua}
        |u^a-u_e(y)|\leq& C\epsilon^{\frac{2}{3}}u_e(y),~~   |\partial_yu^a|\leq C, ~~ |(u^a-u_e(y))_{yy}|\leq C\epsilon^{\frac13}, \nonumber\\
         |\partial_x^j u^a|\leq& C_j \epsilon, ~~ |\partial_x^j u^a|\leq C_j \epsilon^{\frac{2}{3}} u_e(y),~~j\geq 1,
       \end{align}
    and 
    \begin{align} \label{estimate on va}
      |\partial_x^j v^a|\leq C_j \epsilon~~,  |\partial_x^j v^a|\leq C_j \epsilon u_e(y), ~~ |\partial_x^j v_y^a|\leq C_j \epsilon^{\frac{2}{3}} u_e(y), ~~ j\geq 0.
    \end{align}
    where $C,C_j$ are independent of $\epsilon$.
\end{lemma}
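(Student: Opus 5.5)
Write $u^a-u_e(y)=(u_e^a-u_e)+(1-\chi)u_p^a+\chi\hat u_p^a+\epsilon^5h$ and treat each piece with the bounds \eqref{estimate on approximate solution} and the properties of $h$. The mechanism behind every weighted bound (those with the factor $u_e(y)$) is the same: each piece either vanishes at the walls or is $O(\epsilon)$ with derivatives. By the symmetry \eqref{uasymmetric} it suffices to work on $[0,\frac12]$, where $u_e(y)\sim \alpha y$. There the upper layer terms are multiplied by $1-\chi$ and supported in $y\ge\frac14$, so they are $O(\epsilon^N)$ times a quantity at least $\tfrac{3}{16}\alpha$ after using the decay $\xi^l$ with $|\xi|\gtrsim \epsilon^{-1/3}$. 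Hence only the Euler part, the lower layer part and $h$ matter near $y=0$.

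\textbf{Step 1: the bound $|u^a-u_e|\le C\epsilon^{2/3}u_e$.} The boundary condition $u^a(x,0)=0=u_e(0)$ gives $u^a-u_e=\int_0^y\partial_y(u^a-u_e)\,d\bar y$. It therefore suffices to show $|\partial_y(u^a-u_e)|\le C\epsilon^{2/3}$ and then divide by $u_e\ge \alpha y/2$. The Euler part and $h$ contribute $O(\epsilon)$; for $h$ we use Sobolev embedding with $\|\partial^k h\|\lesssim\epsilon^{-k/3}$, which gives a tiny contribution since it is multiplied by $\epsilon^5$. The lower layer contributes $\partial_y\hat u_p^a=\epsilon^{-1/3}\partial_\gamma\hat u_p^a=O(\epsilon^{2/3})$ by \eqref{estimate of lower Prandtl}, and $\chi'$ terms are $O(\epsilon)$. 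By the same computation $|\partial_y u^a|\le |u_e'|+C\epsilon^{2/3}\le C$. For $(u^a-u_e)_{yy}$ each $y$-derivative on $\hat u_p^a$ costs $\epsilon^{-1/3}$, giving $\epsilon\cdot\epsilon^{-2/3}=\epsilon^{1/3}$.

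\textbf{Step 2: $x$-derivatives.} For $j\ge1$, $\partial_x^j u_e=0$, so $\partial_x^ju^a=\partial_x^j(u^a-u_e)$. The unweighted bound $C_j\epsilon$ follows directly from \eqref{estimate on approximate solution}. For the weighted bound we repeat Step 1 with $\partial_x^j$ applied: $\partial_x^ju^a$ still vanishes at $y=0$, and $|\partial_y\partial_x^ju^a|\le C_j\epsilon^{2/3}$. The estimates for $v^a$ go the same way: $v^a$ vanishes at the walls, $\partial_x^j v^a=O(\epsilon)$, and $\partial_y\partial_x^jv^a=-\partial_x^{j+1}u^a=O(\epsilon)$. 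Integrating from $y=0$ then gives $|\partial_x^jv^a|\le C_j\epsilon y\le C_j\epsilon u_e$. For $\partial_x^jv^a_y=-\partial_x^{j+1}u^a$ we simply invoke the weighted $u^a$ bound just proved, obtaining $C\epsilon^{2/3}u_e$.

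\textbf{Main obstacle.} The delicate point is that the bounds in \eqref{estimate on approximate solution} are stated in the stretched variables, while the mixed derivatives coming from the cutoff and from $h$ need uniform $L^\infty$ control. For $h$ I would use the 2D Sobolev embedding $\|f\|_{L^\infty}\lesssim\|f\|_{H^2}$. This costs at most $\epsilon^{-1}$, which the prefactor $\epsilon^5$ absorbs. We also need $h$, together with its $x$-derivatives, to vanish at $y=0,1$ so that the integration-from-the-wall argument applies; this is given by $h(x,0)=h(x,1)=0$. Finally, $\epsilon_0$ is chosen so that $C\epsilon_0^{2/3}<\frac12$.
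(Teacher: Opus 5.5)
Your proposal is correct and follows essentially the paper's argument. Like the paper, you bound $\partial_y(u^a-u_e)$ and $\partial_y\partial_x^j u^a$ by $C\epsilon^{2/3}$, using the $\epsilon^{-1/3}$ cost of each normal derivative on the layer terms, and then integrate from the wall where these quantities vanish. You also obtain the $v^a$ bounds from $\partial_y\partial_x^j v^a=-\partial_x^{j+1}u^a$, as the paper does. Your version is slightly more careful than the paper's: you isolate the $O(\epsilon^N)$ contribution of the opposite-wall layer on each half of the strip, and you control $\epsilon^5 h$ explicitly through $H^2\hookrightarrow L^\infty$.
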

\begin{proof}
    For $y\in [0, \frac12]$, by (\ref{estimate on approximate solution}), we have 
    \begin{align} \label{p50}
    \begin{aligned}
        |(u^a-u_e(y))_y|&\leq |(u_e^a-u_e(y))_y|+|\tilde{u}_{p,y}^a|+\epsilon^5|h_y| \\
        &\leq C\epsilon + C\epsilon^{-\frac{1}{3}}|\tilde{u}_{p,\xi}^a|+C\epsilon^{\frac{11}{3}} \leq C\epsilon^{\frac{2}{3}}. 
    \end{aligned}
    \end{align}
    Combining the boundary condition $(u^a-u_e(y))|_{y=0}=0$, we deduce for $y\in [0, \frac12],$
    \begin{align} 
    \begin{aligned}
        |u^a-u_e(y)|\leq  C\epsilon^{\frac{2}{3}}y\leq C\epsilon^{\frac{2}{3}}u_e(y).\nonumber
    \end{aligned}
    \end{align}
    Similarly, for $y\in [\frac12, 1]$ there holds $|u^a-u_e(y)|\leq C\epsilon^{\frac{2}{3}}u_e(y)$. By (\ref{estimate on approximate solution}), we obtain directly 
    $ |\partial_yu^a|\leq C, ~~ |(u^a-u_e(y))_{yy}|\leq C\epsilon^{\frac13}$.\\
    
    For $1 \leq j \in \mathbb{N}$, the same argument as (\ref{p50}) gives $\|\partial_x^j u_y^a\|_{L^{\infty}}\leq C\epsilon^{\frac{2}{3}}$ for $y \in [0,1]$. Thus, by using the boundary condition $\partial_x^ju^a|_{y=0,1}=0$, we have
    \begin{align} 
        |\partial_x^ju^a|\leq C\epsilon^{\frac{2}{3}}u_e(y),~~for ~~ y\in [0, 1].\nonumber
    \end{align}
    
    The first estimate of (\ref{estimate on va}) can be obtained by the same argument and the second one of (\ref{estimate on va}) is a direct consequence of (\ref{estimate on ua}) with the incompressibility.\\
\end{proof}

     Then, we introduce a Lemma which will be useful frequently.
    \begin{lemma} 
    Let $f(y) \in C^1([0,1])$ satisfy $f(\frac{1}{2})=0$, then 
    \begin{align} \label{weight estimate inequation}
    \int_{0}^{1} f^2(y) \mathrm{d}y \le \ln 2 \int_{0}^{1}y(1-y)(f'(y))^2 \mathrm{d}y.
    \end{align}
\end{lemma}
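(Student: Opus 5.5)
Since $f(\tfrac12)=0$, we can write $f(y)=-\int_y^{1/2}f'(s)\,\mathrm{d}s$ for $y\in[0,\tfrac12]$ and $f(y)=\int_{1/2}^y f'(s)\,\mathrm{d}s$ for $y\in[\tfrac12,1]$. The plan is to treat the two halves separately, apply a weighted Cauchy--Schwarz inequality with the weight $s(1-s)$, and then exchange the order of integration. On $[0,\tfrac12]$ we get
\[
f^2(y)\le \Big(\int_y^{1/2}\frac{\mathrm{d}s}{s(1-s)}\Big)\Big(\int_y^{1/2}s(1-s)(f'(s))^2\,\mathrm{d}s\Big).
\]
Here $\int_y^{1/2}\frac{\mathrm{d}s}{s(1-s)}=\ln\frac{1-y}{y}$, which is integrable at $y=0$. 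Integrating in $y$ over $[0,\tfrac12]$ and bounding the second factor by its value at $y=0$ would give a constant $\int_0^{1/2}\ln\frac{1-y}{y}\,\mathrm{d}y=\ln 2$. This already yields $\int_0^{1/2}f^2\le \ln2\int_0^{1/2}y(1-y)(f')^2$, and symmetrically $\int_{1/2}^1 f^2\le\ln2\int_{1/2}^1 y(1-y)(f')^2$. Adding the two halves gives \eqref{weight estimate inequation}.

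It remains to verify $\int_0^{1/2}\ln\frac{1-y}{y}\,\mathrm{d}y=\ln 2$. We have $\int_0^{1/2}\ln(1-y)\,\mathrm{d}y=\int_{1/2}^1\ln t\,\mathrm{d}t$, so the whole integral equals $\int_{1/2}^1\ln t\,\mathrm{d}t-\int_0^{1/2}\ln t\,\mathrm{d}t$. Using the antiderivative $t\ln t-t$, the first term is $-1-(\tfrac12\ln\tfrac12-\tfrac12)=-\tfrac12+\tfrac12\ln2$, and the second is $\tfrac12\ln\tfrac12-\tfrac12=-\tfrac12\ln 2-\tfrac12$. Their difference is $\ln 2$, as required.

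An optional refinement is to use Fubini instead of the crude bound of the second factor by its value at $y=0$:
\[
\int_0^{1/2}f^2\,\mathrm{d}y\le\int_0^{1/2}s(1-s)(f'(s))^2\Big(\int_0^s\ln\tfrac{1-y}{y}\,\mathrm{d}y\Big)\mathrm{d}s,
\]
where the inner integral is increasing in $s$ and is at most $\ln 2$. Either route gives the constant $\ln 2$.

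The only points needing care are the integrability of $f'$ against the singular weight and the justification of Cauchy--Schwarz up to the endpoint. Both are harmless because $f\in C^1([0,1])$, so all the integrals involved are finite. I therefore expect no real obstacle; the main thing to double-check is the elementary value $\ln 2$ computed above.
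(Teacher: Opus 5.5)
Your proof is correct and follows the paper's argument: split at $y=\tfrac12$, apply Cauchy--Schwarz with weight $t(1-t)$ to get $f^2(y)\le \ln\frac{1-y}{y}\int_y^{1/2}t(1-t)(f'(t))^2\,\mathrm{d}t$, and integrate using $\int_0^{1/2}\ln\frac{1-y}{y}\,\mathrm{d}y=\ln 2$. The paper uses the Fubini exchange (your optional refinement). Your cruder step of bounding the tail integral by its full value is equally valid, since $\ln\frac{1-y}{y}\ge 0$ on $(0,\tfrac12]$.
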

\begin{proof}
Since $f(\frac{1}{2})=0$, we have
\[
f(y)=-\int_{y}^{\frac{1}{2}}f'(t)\mathrm{d}t, ~~ y\in [0,1]
\]
hence
\[
f^2(y)\leq \int_{y}^{\frac{1}{2}}t(1-t)(f'(t))^2\mathrm{d}t\int_{y}^{\frac{1}{2}}\frac{1}{t(1-t)}\mathrm{d}t=\int_{y}^{\frac{1}{2}}t(1-t)(f'(t))^2\mathrm{d}t\ln \frac{1-y}{y}.
\]
Thus, there holds 
\[
\begin{aligned}
\int_{0}^{\frac{1}{2}}f^2(y)\mathrm{d}y&\leq \int_{0}^{\frac{1}{2}}\ln \frac{1-y}{y}\mathrm{d}y\int_{y}^{\frac{1}{2}}t(1-t)(f'(t))^2\mathrm{d}t \\
&=\int_{0}^{\frac{1}{2}}t(1-t)(f'(t))^2\mathrm{d}t\int_{0}^{t}\ln \frac{1-y}{y}\mathrm{d}y \\
&\leq \ln2 \int_{0}^{\frac{1}{2}}y(1-y)(f'(y))^2\mathrm{d}y.
\end{aligned}
\]
Similarly, we obtain
\[
\int_{\frac{1}{2}}^{1}f^2(y)\mathrm{d}y \leq \ln2 \int_{\frac{1}{2}}^{1}y(1-y)(f'(y))^2\mathrm{d}y.
\]
Adding them together, we obtain (\ref{weight estimate inequation}).
\end{proof}

  We now turn to the linear stability estimate of the error equation. For simplicity, we set $\hat{H}(\Omega):= \left \{ f\in H_0^1(\Omega) ~|~ f(x,y)=f(x,1-y), \forall (x, y)\in \Omega \right \}. $
\begin{proposition} \label{pro-linear stability estimate}
    Let $(u, v)\in \hat{H}(\Omega) \times \widetilde{H}(\Omega)$ be a smooth solution of (\ref{error}), then there exists $\epsilon_0 > 0 $ such that for any $\epsilon \in (0, \epsilon_0)$, there holds
    \begin{align}  \label{linear stability estimate}
        \begin{aligned}
            &\int_{\Omega} u_e(y)(u_x^2+v_x^2)\mathrm{d}x\mathrm{d}y+\epsilon\int_{\Omega} (u_{0,y}^2+u_e(y)u_y^2) \mathrm{d}x\mathrm{d}y +\epsilon^2\int_{\Omega} (u_{xx}^2+u_{xy}^2+v_{xx}^2+v_{xy}^2)\mathrm{d}x\mathrm{d}y  \\
            \leq & C\Big|\int_{\Omega}(R_uu_x+R_vv_x)\mathrm{d}x\mathrm{d}y\Big| + C\int_{\Omega}(R_u^2+u_e(y)R_v^2)\mathrm{d}x\mathrm{d}y\\
            &+C\Big|\int_{\Omega}R_uu_0\mathrm{d}x\mathrm{d}y\Big|+C\epsilon\Big|\int_{\Omega}(R_uu_{xx}+R_vv_{xx})\mathrm{d}x\mathrm{d}y\Big|.
        \end{aligned}
    \end{align}
    \end{proposition}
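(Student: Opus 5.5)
The estimate is obtained by combining three multiplier identities for \eqref{error}: a positivity estimate (multiplier $(u_x,v_x)$), a basic energy estimate for the $x$-mean (multiplier $u_0$), and a higher-order dissipative estimate (multiplier $-\epsilon(u_{xx},v_{xx})$). At the end they are added with suitable weights. Throughout, the pressure drops out by incompressibility and periodicity. The symmetry $(u,v)\in\hat H\times\widetilde H$ is used only through $v(x,\tfrac12)=0$, which lets us apply \eqref{weight estimate inequation} to $v$.

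\textbf{Step 1 (positivity).} Multiply the first two equations of \eqref{error} by $u_x$ and $v_x$ and integrate.
\begin{itemize}
\item The viscous terms vanish: $\int \Delta u\, u_x=0$ by periodicity and the Dirichlet conditions.
\item In the transport terms, $\int u^a u_x^2$ and $\int u^a v_x^2$ give the main positive part. The terms $\int v^a u_y u_x$ and $\int v^a v_y v_x$ are handled with $|v^a|\le C\epsilon u_e$ and $|v^a_y|\le C\epsilon^{2/3}u_e$ from Lemma \ref{estimate on app solution}. For instance, $\int v^a u_y u_x\le C\epsilon\int u_e(u_y^2+u_x^2)$, and the $\epsilon\int u_e u_y^2$ part is absorbed by Step 2.
\item The zeroth-order terms $u u^a_x u_x$, $u v^a_x v_x$ and $v v^a_y v_x$ all carry an $x$-derivative of the approximate solution, hence a factor $\epsilon^{2/3}u_e$. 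Writing $u=u_0+\bar u$, we bound $\bar u$ by $u_x$ via Poincar\'e in $x$, and $u_0$ by Hardy \eqref{hardy2} in terms of $u_{0,y}$, at the cost $\epsilon^{2/3}\|u_{0,y}\|^2$.
\item The main term is $\int u^a_y v u_x=\tfrac12\int u^a_{yy}v^2$, using $u_x=-v_y$. By the argument sketched in the introduction, together with \eqref{weight estimate inequation} applied to $v$ (odd about $y=\tfrac12$), $v_y=-u_x$ and $|u^a-u_e|\le C\epsilon^{2/3}u_e$, it is at most $(\ln 2+C\epsilon^{1/3})\int u_e u_x^2$.
\end{itemize}
Since $\ln2<1$, this term is absorbed, leaving
\[
(1-\ln2-C\epsilon^{1/3})\int u_e(u_x^2+v_x^2)\le \Big|\int(R_uu_x+R_vv_x)\Big|+C\epsilon^{2/3}\|u_{0,y}\|^2+C\epsilon\int u_eu_y^2.
\]

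\textbf{Step 2 (energy).} First multiply the $u$-equation by $u_0$. The pressure term vanishes since $p$ is periodic, and averaging in $x$ gives $\epsilon\|u_{0,y}\|^2=\int R_u u_0-\int (S_u)_0u_0$. The convective mean $(S_u)_0$ is at least quadratic in the oscillating parts or carries a factor $\epsilon$ (note $v_0=0$), so it is controlled by $\int u_e(u_x^2+v_x^2)$ plus small multiples of $\epsilon\|u_{0,y}\|^2$.

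Next, to obtain $\epsilon\int u_e u_y^2$, multiply the $u$-equation by $u_e(y)\bar u$, or equivalently the full system by $u_e(y)(u,v)$. The terms $\epsilon\int u_e'(u u_y+v v_y)$ are handled with Hardy and $u_e'$ bounded. The weighted pressure term $\int p\,u_e' v$ is the term I expect to be the main obstacle: it forces $R_v$ into the estimate only with the weight $u_e$, matching the $u_e R_v^2$ term on the right-hand side. To treat it, I would first try to eliminate $p$ by testing the $v$-equation against a suitable antiderivative, and failing that, bound $p$ through the stream-function formulation \eqref{vorticity equation}. The transport terms are again bounded by $\int u_e(u_x^2+v_x^2)$ via Cauchy--Schwarz, producing $\int(R_u^2+u_eR_v^2)$ on the right.

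\textbf{Step 3 (second-order).} Multiply by $-\epsilon(u_{xx},v_{xx})$. This gives $\epsilon^2\int(u_{xx}^2+u_{xy}^2+v_{xx}^2+v_{xy}^2)$ from the viscous term, since $\int\Delta u\,u_{xx}=\|\nabla u_x\|^2$. The transport terms $\epsilon\int u^a u_x u_{xx}=-\tfrac\epsilon2\int u^a_x u_x^2$ and similar are small. The terms $\epsilon\int v^a u_y u_{xx}$ and $\epsilon\int v u^a_y u_{xx}$ are bounded by $\epsilon^2\|u_{xx}\|^2$ plus $\int u_e u_x^2$ and $\epsilon\int u_e u_y^2$, using $|v^a|\le C\epsilon u_e$, $|v|\lesssim$ (Hardy) and $v_y=-u_x$.

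Adding Step 1, a large multiple of Step 2 and Step 3, and taking $\epsilon_0$ small, all cross terms are absorbed and \eqref{linear stability estimate} follows.
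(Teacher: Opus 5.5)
Your three-step structure matches the paper's: positivity with $(u_x,v_x)$, energy with $u_0$ plus a weighted multiplier, and a second-order estimate with $\epsilon(u_{xx},v_{xx})$. However, the step that produces $\epsilon\int_\Omega u_e(y)u_y^2$ is left open, and that is a genuine gap. The field $u_e(y)(u,v)$ is not divergence-free, since $\partial_x(u_eu)+\partial_y(u_ev)=u_e'v$, so it leaves the term $\int_\Omega p\,u_e'v$. There is no pressure estimate in this framework. Deriving one, e.g.\ by a Ne\v{c}as-type duality bound, is an additional argument you have not supplied, and it would also bring in $R_v$ without the weight $u_e$.

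The missing idea is to correct the \emph{first} component, not the $v$-equation. Test the momentum equations with $\nabla^\perp(u_e\bar\phi)=(u_e\bar u+u_e'\bar\phi,\,u_e\bar v)$. This field is divergence-free because $\bar\phi_x=-v$, and it vanishes on $y=0,1$. Equivalently, as the paper does, multiply the stream-function equation \eqref{vorticity equation} by $u_e(y)\bar\phi$. The pressure then drops out identically. Since $\bar\phi=\bar\phi_y=0$ on $y=0,1$ and $u_e'''=0$, the viscous term equals
$\epsilon\int_\Omega\big(u_e\bar\phi_{xx}^2+2u_e\bar\phi_{xy}^2+u_e\bar\phi_{yy}^2-u_e''(\bar\phi_x^2+2\bar\phi_y^2)\big)\ge\epsilon\int_\Omega u_e\bar u_y^2$.
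The lower-order terms have a good sign exactly because $u_e''=-2\alpha<0$. The extra piece $u_e'\bar\phi$ only produces terms like $\int_\Omega|v||\bar\phi|\le\|v\|_{L^2}^2\le C\int_\Omega u_eu_x^2$, using Poincar\'e in $x$ and \eqref{weight estimate inequation}. The source enters as $\int_\Omega\big(R_u(u_e\bar\phi)_y-R_vu_e\bar\phi_x\big)$, which is where $\int_\Omega(R_u^2+u_eR_v^2)$ comes from.

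The closing argument also fails as written. Put $X:=\int_\Omega u_e(u_x^2+v_x^2)$ and $Y:=\epsilon\int_\Omega(u_{0,y}^2+u_e\bar u_y^2)$. Step 2 gives $Y\le CX+(\text{sources})$ with an $O(1)$ constant. Adding a \emph{large} multiple of Step 2 therefore puts a large multiple of $X$ on the right, which the fixed margin $1-\ln2$ from Step 1 cannot absorb. The logic must run the other way: the cross terms in Step 1 must be $o(1)\cdot Y$, and then Step 2 is substituted into Step 1 with an $O(1)$ factor. Your Step 1 bounds give $C\epsilon\int_\Omega u_eu_y^2$, an $O(1)$ multiple of $Y$, and $C\epsilon^{2/3}\|u_{0,y}\|_{L^2}^2$, an $\epsilon^{-1/3}$ multiple of $Y$, so the system does not close.

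The paper uses unbalanced Young inequalities, e.g.\ $C\epsilon\int_\Omega u_e|u_yu_x|\le C\epsilon^2\int_\Omega u_eu_y^2+\frac1{100}\int_\Omega u_eu_x^2$. It also writes $\big|\int_\Omega u^a_xuu_x\big|=\frac12\big|\int_\Omega u^a_{xx}(2\bar uu_0+\bar u^2)\big|$ and uses $|u^a_{xx}|\le C\epsilon$ together with Hardy \eqref{hardy2} for the $u_0$ part. As a result the positivity estimate carries only $C\epsilon^2\int_\Omega(u_{0,y}^2+u_e\bar u_y^2)=C\epsilon Y$. The second-order estimate, whose right side contains $CX+CY$, is applied last rather than fed back.
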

    \begin{proof}The estimate (\ref{linear stability estimate}) can be obtained by taking $\epsilon_0$ small and combining the positivity estimate (\ref{positive estimate}), energy estimate (\ref{energy estimate}) and (\ref{energy estimate2}).
    \end{proof}

\subsection{Positive estimates of error equation (\ref{error})} \label{positive}
\begin{lemma} \label{le4}
    Let $(u, v)\in \hat{H}(\Omega) \times \widetilde{H}(\Omega)$ be a smooth solution of (\ref{error}), then there exists $\epsilon_0 > 0$ such that for any $\epsilon \in (0, \epsilon_0)$, there holds
    \begin{align} \label{positive estimate}
        \int_{\Omega} u_e(y)(u_x^2+v_x^2)\mathrm{d}x\mathrm{d}y \leq& C\epsilon^2\int_{\Omega} [u_{0,y}^2+u_e(y)\bar{u}^2_y]\mathrm{d}x\mathrm{d}y+C\Big|\int_{\Omega} (R_uu_x+R_vv_x)\mathrm{d}x\mathrm{d}y\Big|.
    \end{align}
\end{lemma}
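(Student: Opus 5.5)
We test the first equation of (\ref{error}) against $u_x$ and the second against $v_x$, add the two identities, and integrate over $\Omega$.

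\emph{Pressure and viscous terms.} The pressure contributes $\int(p_xu_x+p_yv_x)=-\int p\,\partial_x(u_x+v_y)=0$, by periodicity and incompressibility. The viscous term gives $-\epsilon\int(\Delta u\,u_x+\Delta v\,v_x)$. Integrating by parts in $x$, $\int u_{xx}u_x=0$. For $\int u_{yy}u_x$, one integration by parts in $y$ (the boundary term vanishes since $u_x=0$ on $y=0,1$) gives $-\int u_y u_{xy}=-\frac12\int\partial_x(u_y^2)=0$. The same holds for $v$. So the viscous term vanishes identically, which is why no $\epsilon$-weighted dissipation appears with a good sign.

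\emph{Convective terms.} We are left with
\[
\int (S_uu_x+S_vv_x)=\int (R_uu_x+R_vv_x).
\]
The leading part of $S$ is $\int u^a(u_x^2+v_x^2)$, which by Lemma \ref{estimate on app solution} is at least $(1-C\epsilon^{2/3})\int u_e(u_x^2+v_x^2)$. The remaining pieces are:
\begin{itemize}
\item[(a)] $\int v^a(u_yu_x+v_yv_x)$. Here $v_yv_x=-u_xv_x$, and $|v^a|\le C\epsilon u_e$, so the $v_yv_x$ part is bounded by $C\epsilon\int u_e(u_x^2+v_x^2)$. For the $u_yu_x$ part, write $u_y=u_{0,y}+\bar u_y$ and use Cauchy--Schwarz:
\[
C\epsilon\int u_e|u_x|\,(|u_{0,y}|+|\bar u_y|)\le \delta\int u_e u_x^2+C_\delta\epsilon^2\int\big(u_{0,y}^2+u_e\bar u_y^2\big),
\]
using $u_e\le 1$.
\item[(b)] $\int(u u^a_x u_x+u v^a_x v_x)$. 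Since only $\bar u$ pairs nontrivially with $u_x$ up to the mean, split $u=u_0+\bar u$. Use $|u^a_x|\le C\epsilon^{2/3}u_e$ together with a Poincar\'e inequality in $x$, $\|\bar u\|\le\|u_x\|$, weighted by $u_e$, which gives $C\epsilon^{2/3}\int u_e u_x^2$. The $u_0$ part is bounded by $C\epsilon\int u_e|u_0||u_x|$; applying the Hardy inequality (\ref{hardy2}) to $u_0$ (so that $|u_0|\lesssim$ a $y$-weight times $\|u_{0,y}\|$) yields $\delta\int u_eu_x^2+C\epsilon^2\int u_{0,y}^2$. The $v^a_x$ term is treated the same way.
\item[(c)] $\int v u^a_y u_x$. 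As in the introduction, integrating by parts with $u_x=-v_y$ gives $\frac12\int u^a_{yy}v^2$, which is at most $(\alpha+C\epsilon^{1/3})\int v^2$. Since $v$ is odd about $y=\frac12$, we have $v(x,\frac12)=0$, and Lemma (\ref{weight estimate inequation}) gives $\alpha\int v^2\le\ln2\int u_e v_y^2=\ln2\int u_eu_x^2$.
\item[(d)] $\int v v^a_y v_x$. Since $|v^a_y|\le C\epsilon^{2/3}u_e$, this is bounded by $C\epsilon^{2/3}\int u_e v^2$ plus a small multiple of $\int u_e v_x^2$, and the $v^2$ part is handled as in (c).
\end{itemize}

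\emph{Absorption.} Collecting everything, the left side is $(1-\ln2-2\delta-C\epsilon^{1/3})\int u_e(u_x^2+v_x^2)$. Since $\ln2<1$, choosing $\delta$ and $\epsilon_0$ small absorbs all the pieces and gives (\ref{positive estimate}).

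The main obstacle is the curvature term (c): it is an $O(1)$ term with no small prefactor, and the argument rests on the sharp constant $\ln 2<1$ in the weighted Poincar\'e inequality, which is available only because of the odd symmetry of $v$. A secondary difficulty is in (b): we need pointwise bounds on $u^a_x,v^a_x$ that vanish like $u_e$ at the walls, so that the undifferentiated $u$ can be controlled without a term of the form $\int u^2$.
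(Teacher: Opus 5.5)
Your proposal is correct and follows the paper's proof essentially step by step. It uses the same test functions $(u_x,v_x)$, the vanishing of the viscous and pressure terms, and the lower bound $\int u^a(u_x^2+v_x^2)\ge(1-C\epsilon^{2/3})\int u_e(u_x^2+v_x^2)$. It also absorbs the curvature term $\frac12\int u^a_{yy}v^2$ through the weighted Poincar\'e inequality (\ref{weight estimate inequation}) with constant $\ln 2<1$, which is available because $v$ is odd about $y=\frac12$.

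One small correction is needed in (b). Lemma \ref{estimate on app solution} gives only $|u^a_x|\le C\epsilon$, not $C\epsilon u_e$; the latter actually fails inside the $\epsilon^{1/3}$-layer. So the $u_0$ piece should be bounded as $C\epsilon\int|u_0||u_x|\le\delta\int u_eu_x^2+C\epsilon^2\int u_0^2/u_e$. The Hardy inequality (\ref{hardy2}) still controls this by $C\epsilon^2\int u_{0,y}^2$. The paper instead first integrates by parts to $-\frac12\int u^a_{xx}u^2$, which kills the $u_0^2$ part, and then uses the same Hardy argument.
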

\begin{proof}
\indent Multiplying the first equation in (\ref{error}) by $ u_x $, the second equation in (\ref{error}) by $ v_x $, adding them together and integrating in $ \Omega $, we have
\begin{align} \label{positive1}
    \begin{aligned}
        &\underbrace{-\epsilon \int_{\Omega} (u_{xx}u_x+u_{yy}u_x+v_{xx}v_x+v_{yy}v_x)\mathrm{d}x\mathrm{d}y}_{diffusion\quad term} \\
        +&\underbrace{\int_{\Omega}(\partial_xpu_x+\partial_ypv_x)\mathrm{d}x\mathrm{d}y}_{pressure \quad term}+\underbrace{\int_{\Omega}(S_uu_x+S_vv_x)\mathrm{d}x\mathrm{d}y}_{linear \quad term} \\
        =&\int_{\Omega}(R_uu_x+R_vv_x)\mathrm{d}x\mathrm{d}y.
    \end{aligned}
\end{align}
Then we deal with them term by term. \\
\textbf{The diffusion term:} Integrating by parts, we obtain that
\[
-\epsilon \int_{\Omega} (u_{xx}u_x+u_{yy}u_x+v_{xx}v_x+v_{yy}v_x)\mathrm{d}x\mathrm{d}y=0.
\]
\textbf{The pressure term:} Integrating by parts and using the divergence-free condition, we obtain that
\[
\int_{\Omega}(\partial_xpu_x+\partial_ypv_x)\mathrm{d}x\mathrm{d}y=0.
\]
\textbf{Linear term:} Direct computation gives 
\[
\begin{aligned}
&\int_{\Omega}(S_uu_x+S_vv_x)\mathrm{d}x\mathrm{d}y \\
=&\int_{\Omega}\big( (u^a\partial_xu+v^a\partial_yu+u\partial_xu^a+v\partial_yu^a)u_x+(u^a\partial_xv+v^a\partial_yv+u\partial_xv^a+v\partial_yv^a)v_x\big)\mathrm{d}x\mathrm{d}y \\
=&\underbrace{\int_{\Omega}u^a(u_x^2+v_x^2)\mathrm{d}x\mathrm{d}y}_{I_1}+\underbrace{\int_{\Omega} v^au_yu_x+v^av_yv_x+v_x^auv_x+v_y^avv_x+u_x^auu_x+u_y^avu_x\mathrm{d}x\mathrm{d}y}_{I_2}.
\end{aligned}
\]
By (\ref{estimate on ua}), we obtain 
\begin{align}\label{estimate I1}
I_1\geq (1-C\epsilon^{\frac23})\int_{\Omega}u_e(y)(u_x^2+v_x^2)\mathrm{d}x\mathrm{d}y.
\end{align}
We deal with $I_2$ term by term. 
By (\ref{estimate on va}) and Cauchy inequality, we have
\begin{align}\label{estimate I21}
\Big|\int_{\Omega}v^au_yu_x\mathrm{d}x\mathrm{d}y\Big|\leq &C\epsilon\int_{\Omega}u_e(y)|u_yu_x|\mathrm{d}x\mathrm{d}y\nonumber\\
\leq& C\epsilon^2\int_{\Omega}u_e(y)u_y^2\mathrm{d}x\mathrm{d}y+\frac{1}{100}\int_{\Omega}u_e(y)u_x^2\mathrm{d}x\mathrm{d}y.
\end{align}
Similarly, there holds
\begin{align}\label{estimate I22}
&\Big|\int_{\Omega}(v^av_yv_x+v_y^avv_x)\mathrm{d}x\mathrm{d}y\Big| \nonumber\\
\leq&C\epsilon\int_{\Omega}u_e(y)(v_y^2+v_x^2)\mathrm{d}x\mathrm{d}y+C\epsilon^{\frac{2}{3}}\int_{\Omega}u_e(y)(v^2+v_x^2)\mathrm{d}x\mathrm{d}y\nonumber\\
\leq&C\epsilon^{\frac{2}{3}}\int_{\Omega} u_e(y)(u_x^2+v_x^2)\mathrm{d}x\mathrm{d}y,
\end{align}
where at the last line, we use the $ \rm{Poincar\acute{e}} $ inequality in $x$ variable for $v$ with zero mean. \\
Moreover, 
\begin{align}\label{estimate I23}
\Big|\int_{\Omega}v_x^auv_x\mathrm{d}x\mathrm{d}y\Big|\leq& C\epsilon \int_\Omega u_e(y)|u v_x|\mathrm{d}x\mathrm{d}y \nonumber\\
\leq & \frac{1}{100}\int_{\Omega}u_e(y)v_x^2\mathrm{d}x\mathrm{d}y+C\epsilon^2 \int_\Omega u_e(y)u^2\mathrm{d}x\mathrm{d}y \nonumber\\
\leq & \frac{1}{100}\int_{\Omega}u_e(y)v_x^2\mathrm{d}x\mathrm{d}y+C\epsilon^2 \int_\Omega u_e(y)\bar{u}_x^2\mathrm{d}x\mathrm{d}y+C\epsilon^2 \int_\Omega u_{0,y}^2\mathrm{d}x\mathrm{d}y.
\end{align}

Then, integrating by parts gives
\[
\begin{aligned}
\Big|\int_{\Omega} u_x^auu_x\mathrm{d}x\mathrm{d}y\Big|=\frac{1}{2}\Big|\int_{\Omega} u_{xx}^au^2\mathrm{d}x\mathrm{d}y\Big|=&\frac{1}{2}\Big|\int_{\Omega}u_{xx}^a(u_0^2+2\bar{u}u_0+\bar{u}^2)\mathrm{d}x\mathrm{d}y\Big|\\
=&\frac{1}{2}\Big|\int_{\Omega}u_{xx}^a(2\bar{u}u_0+\bar{u}^2)\mathrm{d}x\mathrm{d}y\Big|.
\end{aligned}
\]
By (\ref{estimate on ua}), Hardy inequality (\ref{hardy2}) and $ \rm{Poincar\acute{e}} $ inequality in $x$ variable, we deduce 
\[
\begin{aligned}
\Big|\int_{\Omega}u_{xx}^a\bar{u}u_0\mathrm{d}x\mathrm{d}y\Big|&\leq C\epsilon\int_{\Omega}|\bar{u}u_0|\mathrm{d}x\mathrm{d}y \\
&\leq C\epsilon \int_{\Omega} \Big|\frac{u_0}{y(1-y)}\Big||y(1-y)\bar{u}|\mathrm{d}x\mathrm{d}y \\
&\leq C\epsilon^2\int_{\Omega}\Big(\Big|\frac{u_0}{y}\Big|^2+\Big|\frac{u_0}{1-y}\Big|^2\Big)\mathrm{d}x\mathrm{d}y+\frac{1}{100}\int_{\Omega}u_e(y)\bar{u}^2\mathrm{d}x\mathrm{d}y\\
&\leq C\epsilon^2\int_{\Omega}u_{0,y}^2\mathrm{d}x\mathrm{d}y+\frac{1}{100}\int_{\Omega}u_e(y)u_x^2\mathrm{d}x\mathrm{d}y.
\end{aligned}
\]
By (\ref{estimate on ua}) and $\rm{Poincar\acute{e}} $ inequality in $x$ variable, we deduce 
\[
\Big|\int_{\Omega}u_{xx}^a\bar{u}^2\mathrm{d}x\mathrm{d}y\Big|\leq C\epsilon^{\frac{2}{3}}\int_{\Omega}u_e(y)\bar{u}^2\mathrm{d}x\mathrm{d}y\leq C\epsilon^{\frac{2}{3}}\int_{\Omega}u_e(y)u_x^2\mathrm{d}x\mathrm{d}y.
\]
Collecting the above estimates, we arrive at 
\begin{align}\label{estimate I24}
\Big|\int_{\Omega} u_x^auu_x\mathrm{d}x\mathrm{d}y\Big|\leq C\epsilon^2\int_{\Omega}u_{0,y}^2\mathrm{d}x\mathrm{d}y+\Big(\frac{1}{100}+C\epsilon^{\frac{2}{3}}\Big)\int_{\Omega}u_e(y)u_x^2\mathrm{d}x\mathrm{d}y. 
\end{align}

Finally, by integration by parts and (\ref{estimate on ua}), there holds
\begin{align}
\Big|\int_{\Omega}u_y^avu_x\mathrm{d}x\mathrm{d}y\Big|=&\frac{1}{2}\Big|\int_{\Omega}u_{yy}^av^2\mathrm{d}x\mathrm{d}y\Big|\nonumber\\
\leq& \frac{1}{2}\int_{\Omega}|(u^a-u_e(y))_{yy}v^2\mathrm{d}x\mathrm{d}y+\frac{1}{2}\int_{\Omega}|u_e''(y)|v^2\mathrm{d}x\mathrm{d}y\nonumber\\
\leq& C\epsilon^{\frac13}\int_{\Omega} v^2\mathrm{d}x\mathrm{d}y+\int_{\Omega}\alpha v^2\mathrm{d}x\mathrm{d}y.\nonumber
\end{align}
By (\ref{weight estimate inequation}), we deduce 
\begin{align}
\int_{\Omega}\alpha v^2\mathrm{d}x\mathrm{d}y\leq \ln 2 \int_{\Omega} u_e(y)v_y^2 \mathrm{d}x\mathrm{d}y.\nonumber
\end{align}
Thus, we obtain
\begin{align}\label{estimate I25}
\Big|\int_{\Omega}u_y^avu_x\mathrm{d}x\mathrm{d}y\Big|\leq\Big(\ln 2+C\epsilon^{\frac13}\Big)\int_{\Omega} u_e(y)u_x^2 \mathrm{d}x\mathrm{d}y.
\end{align}
Collecting the estimate (\ref{estimate I21})-(\ref{estimate I25}), we deduce that
\begin{align}\label{estimate I2}
|I_2|\leq \Big(\ln2+\frac{3}{100}+C\epsilon^{\frac13}\Big)\int_{\Omega} u_e(y)(u_x^2+v_x^2)\mathrm{d}x\mathrm{d}y+C\epsilon^2\int_{\Omega} [u_{0,y}^2+u_e(y)\bar{u}^2_y]\mathrm{d}x\mathrm{d}y.
\end{align}
Thus, by (\ref{positive1}), (\ref{estimate I1}) and (\ref{estimate I2}), we deduce that 
 there exist $\epsilon_0 > 0$ such that for any $\epsilon \in (0, \epsilon_0)$, there holds
\[
\begin{aligned}
\int_{\Omega} u_e(y)(u_x^2+v_x^2)\mathrm{d}x\mathrm{d}y \leq& C\epsilon^2\int_{\Omega} [u_{0,y}^2+u_e(y)\bar{u}^2_y]\mathrm{d}x\mathrm{d}y+C\int_{\Omega} (R_uu_x+R_vv_x)\mathrm{d}x\mathrm{d}y.
 \end{aligned}
\]
This completes the proof of this Lemma.
\end{proof}

\subsection{Energy estimate of error equation (\ref{error})} \label{energy}
\begin{lemma} \label{le5}
    Let $(u, v)\in \hat{H}(\Omega) \times \widetilde{H}(\Omega)$ be a smooth solution of (\ref{error}), then there exists $\epsilon_0 > 0$ such that for any $\epsilon \in (0, \epsilon_0)$, there holds
    \begin{align} \label{energy estimate}
        \epsilon\int_{\Omega}(u_{0,y}^2+u_e(y)\bar{u}_y^2)\mathrm{d}x\mathrm{d}y \leq& C\int_{\Omega}u_e(y)(u_x^2+v_x^2)\mathrm{d}x\mathrm{d}y\nonumber\\
       & +\Big|\int_{\Omega}R_uu_0\mathrm{d}x\mathrm{d}y\Big|+C\int_{\Omega}(R_u^2+u_e(y)R_v^2)\mathrm{d}x\mathrm{d}y.
    \end{align}
    \end{lemma}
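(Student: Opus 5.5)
The plan is to split the estimate into its zero-mode and nonzero-mode parts and treat each by a separate energy identity.

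\textbf{Zero mode.} I would first average the first equation of (\ref{error}) in $x$. Since $u,v$ are periodic, $\partial_x p$ has zero $x$-mean (the pressure correction $p$ is periodic). Using $v_0=0$, which follows from $\partial_y v_0=-\partial_x u$ averaged and $v_0|_{y=0}=0$, the averaged equation reads
\[
-\epsilon u_{0,yy}+(S_u)_0=(R_u)_0 .
\]
Multiplying by $u_0$ and integrating over $\Omega$ gives $\epsilon\int_\Omega u_{0,y}^2$ on the left, with boundary terms vanishing since $u_0(0)=u_0(1)=0$. The source term equals $\int_\Omega R_u u_0$.

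For $\int_\Omega S_u u_0$ I would write $S_u=\partial_x(u^a u)+\partial_y(v^a u)+u\partial_x u^a+v\partial_y u^a$ using the divergence-free conditions. The $x$-derivative part integrates to zero against $u_0$. The remaining pieces are bounded as follows.
\begin{itemize}
\item $\int v^a u\,u_{0,y}$: by $|v^a|\le C\epsilon u_e$ this is at most $C\epsilon\|u_{0,y}\|\,\|u_e u\|$.
\item $\int u_x^a u\,u_0$: by $|u_x^a|\le C\epsilon^{2/3}u_e$ only the $\bar u$-part survives averaging, since $\int u_x^a\,dx=0$ kills the product with $u_0$.
\item $\int u_y^a v\,u_0$: here $v=\bar v$, with $|\bar v|\lesssim|\bar v_x|$ by Poincar\'e in $x$, and Hardy (\ref{hardy2}) gives $|u_0|/(y(1-y))\lesssim\|u_{0,y}\|$.
\end{itemize}
Each of these is bounded by $\tfrac{\epsilon}{10}\|u_{0,y}\|^2+C\int u_e(u_x^2+v_x^2)$. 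The bounds are favorable because, e.g., $\int u_y^a\bar v u_0=\int u_y^a\bar v\,y(1-y)\cdot u_0/(y(1-y))$ is controlled by $C\|u_e^{1/2}v_x\|\,\|u_{0,y}\|$. The cost is a factor $\epsilon^{-1}$ in front of $\int u_e(u_x^2+v_x^2)$, which I will have to track. If this factor turns out to be fatal, I would instead exploit $\int_\Omega u_y^a\bar v u_0=0$ exactly: $u_y^a-u_e'$ is $O(\epsilon^{2/3})$ and $\int \bar v\,dx=0$, so replacing $u_y^a$ by $u_e'(y)$ kills the term, and the remainder carries a smallness factor.

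\textbf{Nonzero mode.} Next I would multiply the first equation by $u_e\bar u$ and the second by $u_e\bar v$. The pressure contributes $\int u_e' p\,\bar v$, which I would handle by replacing the multiplier with a divergence-free one, $(u_e\bar u+\text{correction},\,u_e\bar v)$, whose correction is $\int^y u_e'\bar v$-type. Alternatively, I would test the equations with $(\partial_y(u_e\psi),-\partial_x(u_e\psi))$, where $\psi$ is the stream function of $(\bar u,\bar v)$. The diffusion then yields $\epsilon\int u_e|\nabla\bar u|^2$ plus commutators $\epsilon\int u_e'\bar u\bar u_y$. These commutators integrate to $-\tfrac{\epsilon}{2}\int u_e''\bar u^2=\epsilon\alpha\int\bar u^2$, which has a good sign, or are bounded by $\epsilon\|\bar u\|^2\lesssim\epsilon\int u_e u_x^2$ via (\ref{weight estimate inequation}) applied to $x$-Poincar\'e.

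The convection terms are $O(1)$ times $u$-quantities. Each is bounded by $C\int u_e(u_x^2+v_x^2)$ plus small multiples of $\epsilon\int u_e\bar u_y^2$. The key point is that $u^a\bar u_x\cdot u_e\bar u$ integrates to zero up to $u_x^a$ terms, and the $v u_y^a$ term is controlled by $\int u_e\bar v^2\lesssim\int u_e v_x^2$. The source terms are handled by Cauchy--Schwarz as $\int u_e|R_u\bar u|\le C\int R_u^2+C\int u_e^2\bar u^2$ and similarly for $R_v$ with the weight $u_e$.

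\textbf{Main obstacle.} The hard step is the pressure term in the weighted $\bar u$ estimate. The weight $u_e$ breaks the orthogonality of $\nabla p$, and $p$ is not controlled. I expect to resolve it with the stream-function multiplier $u_e\psi$, where $\psi$ is chosen so that $\psi|_{y=0,1}=0$ for the nonzero modes. This turns the pressure into an exact zero but generates additional terms $\epsilon\int u_e'\,\partial\psi\,\Delta\psi$, which must be absorbed using Hardy (\ref{hardy2}) and Lemma (\ref{weight estimate inequation}).
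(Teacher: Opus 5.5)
Your proposal is correct and follows the paper's proof: the zero mode comes from testing the first equation with $u_0$, and the nonzero mode from testing the stream-function equation with $u_e(y)\bar\phi$ (equivalently, testing the velocity equations with $\nabla^\perp(u_e\bar\phi)$). For that multiplier the paper computes the viscous part exactly as $\epsilon\int_\Omega\bigl(u_e(\bar\phi_{xx}^2+2\bar\phi_{xy}^2+\bar\phi_{yy}^2)-u_e''(\bar\phi_x^2+2\bar\phi_y^2)\bigr)\,dx\,dy$, so since $u_e''=-2\alpha<0$ the extra commutators have a good sign and need no absorption. Your ``fallback'' for $\int_\Omega u^a_y v u_0$ is in fact mandatory, because the Hardy bound alone loses a fatal $\epsilon^{-1}$; the exact cancellation is $\int_\Omega u_e'(y)\,v\,u_0\,dx\,dy=0$ (not $\int_\Omega u^a_y\bar v u_0=0$), and the remainder is controlled through $|(u^a-u_e)_y|\le C\epsilon^{2/3}$, exactly as in the paper.
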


\begin{proof} The proof is divided into two parts. The first part deals with $\int_{\Omega}u_{0,y}^2\mathrm{d}x\mathrm{d}y$, and the other part takes care of $\int_{\Omega}u_e(y)\bar{u}_y^2\mathrm{d}x\mathrm{d}y$.\\
{\bf Estimate of $\int_{\Omega}u_{0,y}^2\mathrm{d}x\mathrm{d}y$:}
Multiplying $(\ref{error})_1$ by $u_0$ and integrating on $\Omega$, we obtain that
\[
\begin{aligned}
    -\epsilon\int_{\Omega}\bigtriangleup uu_0\mathrm{d}x\mathrm{d}y+\int_{\Omega}(u^au_x+u_x^au+p_x)u_0\mathrm{d}x\mathrm{d}y+\int_{\Omega}(v^au_y+vu_y^a)u_0\mathrm{d}x\mathrm{d}y
    =\int_{\Omega}R_uu_0\mathrm{d}x\mathrm{d}y.
\end{aligned}
\]

Using the boundary condition $u|_{y=0,1}=u_0|_{y=0,1}=0$, we deduce that
\begin{align}\label{diffusion term}
-\epsilon\int_{\Omega}\bigtriangleup uu_0\mathrm{d}x\mathrm{d}y=-\epsilon\int_{\Omega}(u_{xx}+u_{yy})u_0\mathrm{d}x\mathrm{d}y
=-\epsilon\int_{\Omega}u_{0,yy}u_0\mathrm{d}x\mathrm{d}y=\epsilon\int_{\Omega}u_{0,y}^2\mathrm{d}x\mathrm{d}y.
\end{align}

Integrating by parts in $x$,  it is easy to see that
\begin{align}\label{x-term}
\int_{\Omega}(u^au_x+u_x^au+p_x)u_0\mathrm{d}x\mathrm{d}y=\int_{\Omega}\big( (u^au)_x+p_x \big)u_0\mathrm{d}x\mathrm{d}y=0.
\end{align}

Using the facts that 
\[
\int_{0}^{2\pi}v^a\mathrm{d}x=\int_{0}^{2\pi}v\mathrm{d}x=0,
\]
and $u_0$ being independent on $x$, direct computation gives 
\[
\begin{aligned}
    &\int_{\Omega}(v^au_y+vu_y^a)u_0\mathrm{d}x\mathrm{d}y \\
    =&\int_{\Omega}[v^a(\bar{u}+u_0)_yu_0+v(u^a-u_e(y))_yu_0+vu_e'(y)u_0]\mathrm{d}x\mathrm{d}y \\
    =&\int_{\Omega}[v^a\bar{u}_yu_0+v(u^a-u_e(y))_yu_0]\mathrm{d}x\mathrm{d}y \\
    =&-\int_{\Omega}\bar{u}(v_y^au_0+v^au_{0,y})\mathrm{d}x\mathrm{d}y+\int_{\Omega}v(u^a-u_e(y))_yu_0\mathrm{d}x\mathrm{d}y.
\end{aligned}
\]
Moreover, by (\ref{estimate on va}) and Poincaré inequality in $x$, we deduce 
\[
\begin{aligned}
    \Big|\int_{\Omega}\bar{u}v^au_{0,y}\mathrm{d}x\mathrm{d}y\Big| &\leq C\epsilon\int_{\Omega}u_e(y)|\bar{u}u_{0,y}|\mathrm{d}x\mathrm{d}y \\
    &\leq C\epsilon\int_{\Omega}u_e(y)u_x^2\mathrm{d}x\mathrm{d}y+\frac{\epsilon}{10}\int_{\Omega}u_{0,y}^2\mathrm{d}x\mathrm{d}y
\end{aligned}
\]
and 
\[
\begin{aligned}
   \Big| \int_{\Omega}\bar{u}v_y^au_0\mathrm{d}x\mathrm{d}y \Big|&\leq C\epsilon\Big|\int_{\Omega}\bar{u}u_0\mathrm{d}x\mathrm{d}y\Big| \\
    &=C\epsilon\Big|\int_{\Omega}y(1-y)\bar{u}\Big(\frac{u_0}{y}+\frac{u_0}{1-y}\Big)\mathrm{d}x\mathrm{d}y \Big|\\
    &\leq C\epsilon\int_{\Omega}u_e(y)u_x^2\mathrm{d}x\mathrm{d}y+\frac{\epsilon}{10}\int_{\Omega}u_{0,y}^2\mathrm{d}x\mathrm{d}y,
\end{aligned}
\]
where we use Hardy inequality (\ref{hardy2}).

By (\ref{estimate on approximate solution}), Hardy inequality (\ref{hardy2}) and Poincaré inequality in $x$, we have 
\[
\begin{aligned}
    \Big|\int_{\Omega}v(u^a-u_e(y))_yu_0\mathrm{d}x\mathrm{d}y\Big|&\leq C\epsilon^{\frac23}\int_{\Omega}|v u_0|\mathrm{d}x\mathrm{d}y \\
    &\leq C\epsilon^{\frac23}\Big(\int_{\Omega}\frac{u^2_0}{u^2_e(y)}\mathrm{d}x\mathrm{d}y\Big)^{\frac12}\Big(\int_{\Omega}u_e(y)v^2\mathrm{d}x\mathrm{d}y \Big)^{\frac12}\\
    &\leq C\int_{\Omega}u_e(y)v_x^2\mathrm{d}x\mathrm{d}y+ \frac{\epsilon}{10}\int_{\Omega}u_{0,y}^2\mathrm{d}x\mathrm{d}y.
\end{aligned}
\]
Thus, we obtain 
\begin{align}\label{y-term}
    \Big|\int_{\Omega}(v^au_y+vu_y^a)u_0\mathrm{d}x\mathrm{d}y \Big|\leq C\int_{\Omega}u_e(y)v_x^2\mathrm{d}x\mathrm{d}y+ \frac{\epsilon}{2}\int_{\Omega}u_{0,y}^2\mathrm{d}x\mathrm{d}y.
\end{align}
Collecting (\ref{diffusion term})-(\ref{y-term}), we arrive at 
\begin{align}\label{estimate on zero fre}
\epsilon\int_{\Omega}u_{0,y}^2\mathrm{d}x\mathrm{d}y\leq C\int_{\Omega}u_e(y)(u_x^2+v^2_x)\mathrm{d}x\mathrm{d}y
+\Big|\int_{\Omega}R_uu_0\mathrm{d}x\mathrm{d}y\Big|.
\end{align}

{\bf Estimate of $\int_{\Omega}u_e(y)\bar{u}_y^2\mathrm{d}x\mathrm{d}y$:}
Multiplying $(\ref{vorticity equation})_1$ by $u_e(y)\bar{\phi}$ and integrating on $\Omega$, we obtain 
\[
\begin{aligned}
	\underbrace{\epsilon\int_{\Omega} \triangle^2\phi u_e(y) \bar{\phi}\mathrm{d}x\mathrm{d}y}_{J_0} =& \underbrace{\int_{\Omega}(u^a\triangle\phi_x-\phi_x\triangle u^a)u_e(y)\bar{\phi}\mathrm{d}x\mathrm{d}y}_{J_1}+\underbrace{\int_{\Omega}(v^a\triangle \phi_y -\phi_y \triangle v^a)u_e(y)\bar{\phi}\mathrm{d}x\mathrm{d}y }_{J_2}\\
	&+\underbrace{\int_{\Omega}(\partial_xR_v-\partial_y R_u)u_e(y)\bar{\phi}\mathrm{d}x\mathrm{d}y}_{J_3}.
\end{aligned}
\]
We deal with it term by term. 

\textbf{Term $J_0$:} Due to $\phi = \phi_0+\bar{\phi}$, we have
\[
\begin{aligned}
	J_0=&\epsilon\int_{\Omega}(\bar{\phi}_{xxxx}+2\bar{\phi}_{xxyy}+\bar{\phi}_{yyyy})u_e(y)\bar{\phi}\mathrm{d}x\mathrm{d}y \\
	=&\epsilon\int_{\Omega}\left(u_e(y)\bar{\phi}_{xx}^2+2\bar{\phi}_{xy}(u_e(y)\bar{\phi}_x)_y+\bar{\phi}_{yy}(u_e(y)\bar{\phi})_{yy}\right)\mathrm{d}x\mathrm{d}y \\
	=&\epsilon\int_{\Omega}\left(u_e(y)\bar{\phi}_{xx}^2+2u_e(y)\bar{\phi}_{xy}^2+2u_e'(y)\bar{\phi}_x\bar{\phi}_{xy}+\bar{\phi}_{yy}u_e''(y)\bar{\phi}+2u_e'(y)\bar{\phi}_{y}\bar{\phi}_{yy}+u_e(y)\bar{\phi}_{yy}^2\right)\mathrm{d}x\mathrm{d}y \\
	=&\epsilon\int_{\Omega}\left(u_e(y)\bar{\phi}_{xx}^2+2u_e(y)\bar{\phi}_{xy}^2+u_e(y)\bar{\phi}_{yy}^2-u_e''(y)(\bar{\phi}_x^2+2\bar{\phi}_y^2)\right)\mathrm{d}x\mathrm{d}y.
\end{aligned}
\]
Notice that $u''_e(y)=-2\alpha<0$, we deduce 
\begin{align}\label{estimate of I_0}
    J_0\geq \epsilon\int_{\Omega}u_e(y)\bar{\phi}_{yy}^2\mathrm{d}x\mathrm{d}y=\epsilon\int_{\Omega}u_e(y)\bar{u}_y^2\mathrm{d}x\mathrm{d}y.
\end{align}
\textbf{Term $J_1$:}  Direct computation gives that 
\[
\begin{aligned}
	J_1=& \int_{\Omega}\phi_x[(u^au_e(y)\bar{\phi})_{xx}-u_{xx}^au_e(y)\bar{\phi}]\mathrm{d}x\mathrm{d}y+\int_{\Omega}\phi_x[(u^au_e(y)\bar{\phi})_{yy}-u_{yy}^au_e(y)\bar{\phi}]\mathrm{d}x\mathrm{d}y \\
	=&\underbrace{\int_{\Omega}\bar{\phi}_x[u^au_e(y)\bar{\phi}_{xx}+2u_e(y)u_x^a\bar{\phi}_x]\mathrm{d}x\mathrm{d}y}_{J_{11}}-\underbrace{\int_{\Omega}u^a[(u_e(y)\bar{\phi})_{yy}\bar{\phi}_x+2(u_e(y)\bar{\phi})_y\bar{\phi}_{xy}]\mathrm{d}x\mathrm{d}y}_{J_{12}}.
\end{aligned}
\]
By (\ref{estimate on ua}) and  Poincaré inequality in $x$,  we deduce that
\[
\begin{aligned}
   J_{11} \leq& C\int_{\Omega}|\bar{\phi}_x||(u_e(y)\bar{\phi}_{xx}, u_e(y)\bar{\phi}_x)|\mathrm{d}x\mathrm{d}y\leq C\int_{\Omega}u_e(y)\bar{\phi}_{xx}^2\mathrm{d}x\mathrm{d}y;; \\
   J_{12}\leq &C\int_{\Omega}u_e(y)(|\bar{\phi}\bar{\phi}_x|+|\bar{\phi}_x\bar{\phi}_y|+|\bar{\phi}\bar{\phi}_{xy}|+|\bar{\phi}_y\bar{\phi}_{xy}|)\mathrm{d}x\mathrm{d}y
    +\Big|\int_{\Omega}u^au_e(y)\bar{\phi}_{yy}\bar{\phi}_x\mathrm{d}x\mathrm{d}y\Big| \\
    \leq & C\int_{\Omega}u_e(y)(\bar{\phi}_{xx}^2+\bar{\phi}_{xy}^2)\mathrm{d}x\mathrm{d}y+\Big|\int_{\Omega}u^au_e(y)\bar{\phi}_{yy}\bar{\phi}_x\mathrm{d}x\mathrm{d}y\Big|.
\end{aligned}
\]
Furthermore, by integration by parts, the  (\ref{estimate on ua}) and Cauchy inequality, we have
\[
\begin{aligned}
        &\Big|\int_{\Omega}u^au_e(y)\bar{\phi}_{yy}\bar{\phi}_x\mathrm{d}x\mathrm{d}y\Big| \\
        \leq&\Big|\int_{\Omega}\big((u_y^au_e(y)+u^au_e')\bar{\phi}_y\bar{\phi}_x-\frac{1}{2}u_x^au_e(y)\bar{\phi}_y^2\big)\mathrm{d}x\mathrm{d}y\Big| \\
        \leq&C\int_{\Omega}u_e(y)(\bar{\phi}_y^2+\bar{\phi}_x^2)\mathrm{d}x\mathrm{d}y 
       \leq C\int_{\Omega}u_e(y)(\bar{\phi}_{xy}^2+\bar{\phi}_{xx}^2)\mathrm{d}x\mathrm{d}y.
\end{aligned}
\]
Hence, we obtain 
\begin{align}\label{estimate of I_1}
J_1\leq C\int_{\Omega}u_e(y)(u_x^2+v_x^2)\mathrm{d}x\mathrm{d}y.
\end{align}
\textbf{Term $J_2$:} Direct computation give that 
\[
    \begin{aligned}
      J_2
        =&\int_{\Omega}\phi_y[(v^au_e(y)\bar{\phi})_{xx}-v_{xx}^au_e(y)\bar{\phi}]\mathrm{d}x\mathrm{d}y+\int_{\Omega}\phi_y[(v^au_e(y)\bar{\phi})_{yy}-v_{yy}^au_e(y)\bar{\phi}]\mathrm{d}x\mathrm{d}y\\
        =&\underbrace{\int_{\Omega}\phi_y[v^au_e(y)\bar{\phi}_{xx}+2u_e(y)v_x^a\bar{\phi}_x]\mathrm{d}x\mathrm{d}y}_{J_{21}}-
        \underbrace{\int_{\Omega}v^a[\phi_y(u_e(y)\bar{\phi})_{yy}
        +2\phi_{yy}(u_e(y)\bar{\phi})_y]\mathrm{d}x\mathrm{d}y}_{J_{22}}.
 \end{aligned}
\]
By (\ref{estimate on va}) and  Poincaré inequality in $x$,  we deduce that
  \[
    \begin{aligned}     
       J_{21} \leq& C\epsilon\int_{\Omega}|\phi_y||(u_e(y)\bar{\phi}_{xx},u_e(y)\bar{\phi}_x)\mathrm{d}x\mathrm{d}y
       \leq C\epsilon^2\int_{\Omega}u_e(y)\phi_y^2+C\int_{\Omega}u_e(y)\phi_{xx}^2\mathrm{d}x\mathrm{d}y.
    \end{aligned}
\]
Similarly, we can deduce that 
\[
    \begin{aligned}     
      \Big|\int_{\Omega}v^a\phi_{yy}(u_e(y)\bar{\phi})_y\mathrm{d}x\mathrm{d}y\Big|
       \leq& C\epsilon\int_{\Omega}u_e(y)|\phi_{yy}|(|\bar{\phi}|+|\bar{\phi}_y|)\mathrm{d}x\mathrm{d}y\\
       \leq& C\epsilon^2\int_{\Omega}u_e(y)\phi_{yy}^2+C\int_{\Omega}u_e(y)(\phi_{xx}^2+\phi_{xy}^2)\mathrm{d}x\mathrm{d}y,\\
       \Big|\int_{\Omega}v^a\phi_y(u_e(y)\bar{\phi})_{yy}\mathrm{d}x\mathrm{d}y\Big|\leq & \Big|\int_{\Omega}v^a\phi_yu_e(y)\bar{\phi}_{yy}\mathrm{d}x\mathrm{d}y\Big|
       +\int_{\Omega}|v^a\phi_y|(|\bar{\phi}|+|\bar{\phi}_y|)\mathrm{d}x\mathrm{d}y\\
       \leq & C\epsilon\int_{\Omega}u_e(y)|\phi_y\bar{\phi}_{yy}|\mathrm{d}x\mathrm{d}y+C\epsilon^2\int_{\Omega}u_e(y)\phi_y^2+C\int_{\Omega}u_e(y)(\phi_{xx}^2+\phi_{xy}^2)\mathrm{d}x\mathrm{d}y\\
       \leq & C\epsilon\int_{\Omega}u_e(y)\phi_y^2+\frac{\epsilon}{100}\int_\Omega u_e(y)\bar{\phi}^2_{yy}\mathrm{d}x\mathrm{d}y\nonumber\\
       &+C\int_{\Omega}u_e(y)(\phi_{xx}^2+\phi_{xy}^2)\mathrm{d}x\mathrm{d}y.
    \end{aligned}
\]
Thus, we deduce that 
\begin{align*}
J_{22}\leq C\epsilon\int_{\Omega}u_e(y)\phi_y^2\mathrm{d}x\mathrm{d}y+C\epsilon^2\int_{\Omega}u_e(y)\phi_{yy}^2\mathrm{d}x\mathrm{d}y+\frac{\epsilon}{100}\int_\Omega u_e(y)\bar{\phi}^2_{yy}\mathrm{d}x\mathrm{d}y+C\int_{\Omega}u_e(y)(\phi_{xx}^2+\phi_{xy}^2)\mathrm{d}x\mathrm{d}y.
\end{align*}
Hence, there holds
\begin{align}\label{estimate of I_2}
J_2\leq  C\epsilon\int_{\Omega}u_e(y)(\phi_y^2+\phi_{0,yy}^2)\mathrm{d}x\mathrm{d}y+\Big(\frac{\epsilon}{100}+C\epsilon^2\Big)\int_\Omega u_e(y)\bar{\phi}^2_{yy}\mathrm{d}x\mathrm{d}y+C\int_{\Omega}u_e(y)(\phi_{xx}^2+\phi_{xy}^2)\mathrm{d}x\mathrm{d}y.
\end{align}
\textbf{Term $J_3$:} By (\ref{weight estimate inequation}), integration by parts, Cauchy inequality and Poincaré inequality in $x$, we obtain 
    \begin{align}\label{estimate of I_3}
        J_3=&\int_{\Omega}R_u(u_e(y)\bar{\phi})_y-R_vu_e(y)\bar{\phi}_x\mathrm{d}x\mathrm{d}y \nonumber\\
        \leq&C\int_{\Omega}|R_u||(u_e(y)\bar{\phi}_y, \bar{\phi})|+|R_v||u_e(y)\bar{\phi}_x|\mathrm{d}x\mathrm{d}y \nonumber\\
        \leq&C\int_{\Omega}(u_e(y)\bar{\phi}_y^2+u_e(y)\bar{\phi}_x^2+\bar{\phi}^2)\mathrm{d}x\mathrm{d}y+\int_{\Omega}(R_u^2+u_e(y)R_v^2)\mathrm{d}x\mathrm{d}y\nonumber\\
        \leq&C\int_{\Omega}u_e(y)(\bar{\phi}_{xy}^2+\bar{\phi}_{xx}^2)\mathrm{d}x\mathrm{d}y+\int_{\Omega}(R_u^2+u_e(y)R_v^2)\mathrm{d}x\mathrm{d}y.
    \end{align}
Collecting (\ref{estimate of I_0})-(\ref{estimate of I_3})  and choosing $\epsilon_0$ small, we deduce that for any $\epsilon\in (0,\epsilon_0)$, there holds
\[
    \begin{aligned}
        \epsilon\int_{\Omega}u_e(y)\bar{u}_y^2\mathrm{d}x\mathrm{d}y \leq & C\int_{\Omega}u_e(y)(u_x^2+v_x^2)\mathrm{d}x\mathrm{d}y+C\epsilon\int_{\Omega}u_e(y)u_{0,y}^2\mathrm{d}x\mathrm{d}y\\
         &+C\epsilon\int_{\Omega}u_e(y)\phi_{y}^2\mathrm{d}x\mathrm{d}y+C\int_{\Omega}(R_u^2+u_e(y)R_v^2)\mathrm{d}x\mathrm{d}y.
    \end{aligned}
\]
Moreover, by Poincaré inequality in $x$, we deduce that 
\[
\int_{\Omega}u_e(y)\phi_y^2\mathrm{d}x\mathrm{d}y\leq 2\int_{\Omega} u_e(y)(\bar{\phi}^2_y+\phi^2_{0,y}) \mathrm{d}x\mathrm{d}y \leq C \int_{\Omega}(u_e(y)u_x^2+u_{0,y}^2 )\mathrm{d}x\mathrm{d}y.
\]
Thus, we obtain
\begin{align} \label{estimate on non-zero fre}
        \epsilon\int_{\Omega}u_e(y)\bar{u}_y^2\mathrm{d}x\mathrm{d}y \leq & C\int_{\Omega}u_e(y)(u_x^2+v_x^2)\mathrm{d}x\mathrm{d}y+C\epsilon\int_{\Omega}u_{0,y}^2\mathrm{d}x\mathrm{d}y\nonumber\\
        &+C\int_{\Omega}(R_u^2+u_e(y)R_v^2)\mathrm{d}x\mathrm{d}y.
    \end{align}

Finally, combining the estimate (\ref{estimate on zero fre}) and (\ref{estimate on non-zero fre}), we obtain (\ref{energy estimate}) by  taking $\epsilon_0$ sufficiently small.
\end{proof}

\begin{lemma} \label{energy2}
    Let $(u, v)\in \hat{H}(\Omega) \times \widetilde{H}(\Omega)$ be a smooth solution of (\ref{error}), then there exist $\epsilon_0 > 0$ such that for any $\epsilon \in (0, \epsilon_0)$, there holds
    \begin{align} \label{energy estimate2}
        \epsilon^2\int_{\Omega}(u_{xx}^2+u_{xy}^2+v_{xx}^2+v_{xy}^2)\mathrm{d}x\mathrm{d}y \leq& C\int_{\Omega}u_e(y)(u_x^2+v_x^2)\mathrm{d}x\mathrm{d}y\nonumber+ C\epsilon \int_{\Omega}(u_{0, y}^2+u_e(y)\bar{u}_y^2)\mathrm{d}x\mathrm{d}y\\
       & +C\epsilon\Big|\int_{\Omega}(R_uu_{xx}+R_vv_{xx})\mathrm{d}x\mathrm{d}y\Big|.
    \end{align}
    \end{lemma}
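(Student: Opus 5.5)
We test the velocity form of (\ref{error}) against $-\epsilon u_{xx}$ and $-\epsilon v_{xx}$, add the two identities and integrate over $\Omega$. This mirrors the positivity estimate of Lemma \ref{le4}, with one more $x$-derivative and an extra factor $\epsilon$. Since $u,v$ vanish at $y=0,1$, so do $u_x,v_x$, and $x$-periodicity permits integration by parts in $x$.

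\textbf{Diffusion and pressure.} The diffusion term becomes
\[
\epsilon^2\int_\Omega (\Delta u\, u_{xx}+\Delta v\, v_{xx})\,dx\,dy
=\epsilon^2\int_\Omega (u_{xx}^2+u_{xy}^2+v_{xx}^2+v_{xy}^2)\,dx\,dy .
\]
This uses $\int u_{yy}u_{xx}=\int u_{xy}^2$, obtained by integrating by parts first in $x$ and then in $y$; the boundary term $u_{xy}u_x|_{y=0,1}$ vanishes because $u_x=0$ on the walls. The pressure term $-\epsilon\int (p_xu_{xx}+p_yv_{xx})$ vanishes after moving one $x$-derivative onto $p$ and using $u_x+v_y=0$, exactly as in Lemma \ref{le4}. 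The source term gives $\epsilon|\int (R_uu_{xx}+R_vv_{xx})|$.

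\textbf{Linear terms.} It remains to bound $\epsilon\int (S_u u_{xx}+S_v v_{xx})$.
\begin{itemize}
\item The transport part $u^a u_x u_{xx}$ equals $-\tfrac12 u^a_x u_x^2$ after integration by parts. By (\ref{estimate on ua}) it is bounded by $C\epsilon^{2/3}\int u_e u_x^2$. The $v$-component is treated the same way.
\item The terms $v^a u_y u_{xx}$ and $v^a v_y v_{xx}$ are handled with $|v^a|\le C\epsilon u_e$ and Cauchy--Schwarz:
\[
\epsilon\int \epsilon u_e|u_y||u_{xx}|\le \tfrac{\epsilon^2}{100}\int u_{xx}^2 + C\epsilon^2\int u_e^2 u_y^2 .
\]
Splitting $u_y=\bar u_y+u_{0,y}$ bounds the last integral by $C\epsilon\int (u_{0,y}^2+u_e\bar u_y^2)$. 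For $v_y=-u_x$ the bound $C\epsilon^2\int u_e u_x^2$ is immediate.
\item The terms $u u^a_x u_{xx}$ and $u v^a_x v_{xx}$ carry coefficients of size $\epsilon^{2/3}u_e$ or $\epsilon u_e$. After Cauchy--Schwarz they leave $\epsilon^{4/3}\int u_e^2u^2$. Splitting $u=u_0+\bar u$, the Hardy inequality (\ref{hardy2}) controls $u_0$ by $\int u_{0,y}^2$, and the Poincaré inequality in $x$ controls $\bar u$ by $\int u_e u_x^2$.
\item The terms $v u^a_y u_{xx}$ and $v v^a_y v_{xx}$ are the most delicate, since $|u^a_y|\le C$ without smallness. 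Integrating by parts in $x$ turns $\epsilon\int v u^a_y u_{xx}$ into
\[
-\epsilon\int (v_xu^a_y+vu^a_{xy})u_x .
\]
With the pointwise bounds of Lemma \ref{estimate on app solution} this is at most $C\epsilon\int |v_x||u_x|$. Here the weight $u_e$ is missing near the walls, so I would not estimate directly. Instead I would use $v_x,u_x\in H^1_0$ in $y$ together with the Hardy inequality: $\int u_x^2\le C\int u_e u_x^2+C\int u_e^2u_{xy}^2$. This bounds the term by
\[
C\int u_e(u_x^2+v_x^2)+\tfrac{\epsilon^2}{100}\int (u_{xy}^2+v_{xy}^2).
\]
The first part is at most $C\epsilon\int u_e(\cdots)$, which is admissible.
\end{itemize}

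\textbf{Main obstacle.} The hardest step is exactly this loss of the weight $u_e$ near the walls in the terms without a small factor. The unweighted $\int u_x^2$ must be traded for the weighted quantity plus a small multiple of $\epsilon^2\int u_{xy}^2$, through the interpolation just described. Once every term is bounded by a small multiple of the left-hand side plus the quantities on the right of (\ref{energy estimate2}), absorbing the small pieces closes the estimate for $\epsilon_0$ small.
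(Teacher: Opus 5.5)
\textbf{Gap: the interpolation step you flag as the main obstacle does not give the bound you claim.} The rest of your argument agrees with the paper and is fine: the test functions, the diffusion and pressure terms, and the terms with coefficients $u^a$, $v^a$, $u^a_x$, $v^a_x$, $v^a_y$.

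After integrating by parts you face $C\epsilon\int(u_x^2+v_x^2)$. You then invoke $\int u_x^2\le C\int u_e u_x^2+C\int u_e^2u_{xy}^2$. This inequality is true, but it contains no small parameter.
\begin{itemize}
\item Inserted into your bound, it produces $C\epsilon\int u_e^2(u_{xy}^2+v_{xy}^2)$.
\item Since $u_e^2$ is of order one away from the walls, this is a full power of $\epsilon$ larger than the $\epsilon^2\int(u_{xy}^2+v_{xy}^2)$ on the left, so it cannot be absorbed.
\item It is also not among the quantities on the right of (\ref{energy estimate2}), which involve $u_y$ but not $u_{xy}$.
\end{itemize}
So the bound $C\int u_e(u_x^2+v_x^2)+\frac{\epsilon^2}{100}\int(u_{xy}^2+v_{xy}^2)$ does not follow from what you wrote.

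\textbf{How to repair your route.} Use an $\epsilon$-dependent wall splitting. For $f$ vanishing at $y=0,1$, Poincar\'e in the wall strips of width $\delta$, together with $u_e\ge\alpha\delta/2$ between them, gives $\int_0^1 f^2\,dy\le\frac{2}{\alpha\delta}\int_0^1u_ef^2\,dy+\delta^2\int_0^1f_y^2\,dy$. Taking $\delta=c\,\epsilon^{1/2}$ with $c$ small yields
\[
C\epsilon\int(u_x^2+v_x^2)\le C\epsilon^{1/2}\int u_e(u_x^2+v_x^2)+\frac{\epsilon^2}{100}\int(u_{xy}^2+v_{xy}^2).
\]

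\textbf{The paper's route needs no interpolation.} Your diagnosis that the weight $u_e$ ``is missing'' is the real misstep, because there is no need to integrate by parts in $x$. The paper estimates directly $\epsilon\bigl|\int u^a_y v\,u_{xx}\bigr|\le C\epsilon\|v\|_{L^2}\|u_{xx}\|_{L^2}$. By the odd symmetry, $v(x,\tfrac12)=0$, so the weighted Poincar\'e inequality (\ref{weight estimate inequation}) together with $v_y=-u_x$ gives $\|v\|_{L^2}^2\le\frac{\ln 2}{\alpha}\int u_e u_x^2$. Young's inequality then gives $\frac{\epsilon^2}{5}\|u_{xx}\|_{L^2}^2+C\int u_e u_x^2$. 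The weight is carried by $v_y$ rather than by $v$, exactly as in the positivity estimate.

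The companion term $v\,v^a_y v_{xx}$ is not delicate at all, since $|v^a_y|\le C\epsilon^{2/3}u_e$.
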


\begin{proof}
\indent Multiplying the first equation in (\ref{error}) by $ \epsilon u_{xx} $, the second equation in (\ref{error}) by $ \epsilon v_{xx} $, adding them together and integrating in $ \Omega $, we have
\begin{align} 
    \begin{aligned}
        &\underbrace{-\epsilon^2 \int_{\Omega} (u_{xx}u_{xx}+u_{yy}u_{xx}+v_{xx}v_{xx}+v_{yy}v_{xx})\mathrm{d}x\mathrm{d}y}_{K_1} \\
        &+\epsilon\underbrace{\int_{\Omega}(\partial_xpu_{xx}+\partial_ypv_{xx})\mathrm{d}x\mathrm{d}y}_{K_2}+\epsilon\underbrace{\int_{\Omega}(S_uu_{xx}+S_vv_{xx})\mathrm{d}x\mathrm{d}y}_{K_3} \\
        =&\epsilon\int_{\Omega}(R_uu_{xx}+R_vv_{xx})\mathrm{d}x\mathrm{d}y.\nonumber
    \end{aligned}
\end{align}
Then we deal with them term by term. \\
\textbf{Term $K_1$:} Integrating by parts, we deduce 
\begin{align}\label{estimate viscocity}
K_1=-\epsilon^2\int_{\Omega} (u_{xx}^2+u_{xy}^2+v_{xx}^2+v_{xy}^2)\mathrm{d}x\mathrm{d}y.
\end{align}
\textbf{Term $K_2$:} Integrating by parts and using the divergence-free condition, we obtain that
\begin{align}\label{estimate pressure}
K_2=0.
\end{align}
\textbf{Term $K_3$:} 
First, there holds
    \[
    \Big|\int_{\Omega} S_{u}u_{xx}\mathrm{d}x\mathrm{d}y\Big|=\Big|\int_{\Omega}[u^au_x+v^au_y+u_x^au+u_y^av]u_{xx}\mathrm{d}x\mathrm{d}y\Big|.
    \]
 By  Lemma (\ref{estimate on app solution}), we deduce that
    \[
    \begin{aligned}
    \Big|\int_{\Omega} u^au_xu_{xx}\mathrm{d}x\mathrm{d}y\Big|=&\frac{1}{2}\Big|\int_{\Omega}u_x^au_x^2\mathrm{d}x\mathrm{d}y\Big| \leq C\epsilon^{\frac{2}{3}}\|\sqrt{u_e}u_x\|_{L^2}^2,\\
      \Big|\int_{\Omega}v^a u_yu_{xx}\mathrm{d}x\mathrm{d}y\Big|=&C\epsilon\int_{\Omega}u_e(y)|u_yu_{xx}|\mathrm{d}x\mathrm{d}y\Big| 
        \leq  C\epsilon\|\sqrt{u_e}u_y\|_{L^2}\|u_{xx}\|_{L^2},\\
        \Big|\int_{\Omega} u_x^auu_{xx}\mathrm{d}x\mathrm{d}y\Big|=&\Big|\int_{\Omega}(u_x^au_0u_{xx}+u_x^a\bar{u}u_{xx})\mathrm{d}x\mathrm{d}y\Big| \\
        \leq & C\epsilon\|u_{0,y}\|_{L^2}\|u_{xx}\|_{L^2}+C\epsilon^{\frac{2}{3}}\|\sqrt{u_e}u_x\|_{L^2}\|u_{xx}\|_{L^2},\\
        \Big|\int_{\Omega} u_y^avu_{xx}\mathrm{d}x\mathrm{d}y\Big|\leq&C\int_{\Omega}|vu_{xx}|\mathrm{d}x\mathrm{d}y
        \leq  C\|\sqrt{u_e}v_y\|_{L^2}\|u_{xx}\|_{L^2}.
    \end{aligned}
    \]
    Thus, there exist $\epsilon_0 > 0$ such that for any $\epsilon \in (0, \epsilon_0)$, there holds
    \begin{align} \label{estimate suuxx}
    \begin{aligned}
        \Big|\int_{\Omega}S_{u}u_{xx}\mathrm{d}x\mathrm{d}y\Big| 
        \leq \frac{\epsilon}{5}\|u_{xx}\|_{L^2}^2+C\epsilon^{-1}\|(\sqrt{u_e}u_x, \sqrt{u_e}v_x, \sqrt{\epsilon u_e}u_y, \sqrt{\epsilon}u_{0,y})\|_{L^2}^2.
    \end{aligned}
    \end{align}
    Similarly, we can obtian
    \begin{align} \label{estimate svvxx}
    \begin{aligned}
        \Big|\int_{\Omega}S_{v}v_{xx}\mathrm{d}x\mathrm{d}y\Big| 
        \leq  \frac{\epsilon}{5}\| v_{xx}\|_{L^2}^2+C\epsilon^{-1}\|(\sqrt{u_e}u_x, \sqrt{u_e}v_x, \sqrt{\epsilon u_e }u_y, \sqrt{\epsilon }u_{0,y})\|_{L^2}^2.
    \end{aligned}
    \end{align}
Combining (\ref{estimate suuxx}) and (\ref{estimate svvxx}), we obtain 
\begin{align}\label{estimate I_3}
|K_3|\leq \frac{2\epsilon^2}{5}\| v_{xx}\|_{L^2}^2+C\|(\sqrt{u_e}u_x, \sqrt{u_e}v_x, \sqrt{\epsilon u_e }u_y, \sqrt{\epsilon }u_{0,y})\|_{L^2}^2.
\end{align}
(\ref{energy estimate2}) can be obtained by collecting (\ref{estimate viscocity}), (\ref{estimate pressure}) and (\ref{estimate I_3}).
\end{proof}

\section{existence of the error equation}\label{sec4}

In this section, we establish the existence of a solution to the error equation (\ref{error equation}). Firstly, we give the following anisotropic Sobolev embedding theorem, its proof can be found in \cite{FGLT2}.
\begin{lemma}
    Let $(u,v)$ be smooth periodic function in $x$ and satisfies $(u,v)|_{y=0,1}=0$, then 
    \begin{align} \label{inequation2}
    \|(u,v)\|_{L^{\infty}}\leq C\big( \|(u_x, v_x)\|_{L^2}+\|(u_y,v_y)\|_{L^2}+\|(u_{xy},v_{xy})\|_{L^2} \big).
    \end{align}
\end{lemma}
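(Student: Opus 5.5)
The plan is to prove the pointwise bound for each component separately, using the one-dimensional fundamental theorem of calculus in $y$ and in $x$ together with Cauchy--Schwarz. It suffices to treat a single scalar function $f\in\{u,v\}$ that is $2\pi$-periodic in $x$ and satisfies $f|_{y=0,1}=0$; the estimate for $(u,v)$ then follows by summing.

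Split $f=f_0+\bar f$, with $f_0(y)$ the $x$-mean and $\bar f$ of zero mean in $x$. For the mean part, $f_0(0)=0$, so $|f_0(y)|\le\int_0^1|f_{0,y}|\,dy\le\|f_{0,y}\|_{L^2(0,1)}\le C\|f_y\|_{L^2(\Omega)}$, using Jensen in $x$. For the oscillating part, use the one-dimensional embedding in $x$ at fixed $y$. Since $\bar f(\cdot,y)$ has zero mean, it vanishes at some $x_*(y)$, so
\[
|\bar f(x,y)|\le\int_0^{2\pi}|\bar f_x(\bar x,y)|\,d\bar x\le C\|f_x(\cdot,y)\|_{L^2_x}.
\]
This reduces the task to bounding $\sup_y\|f_x(\cdot,y)\|_{L^2_x}^2$.

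Set $g(y)=\|f_x(\cdot,y)\|_{L^2_x}^2$. Since $f_x|_{y=0}=0$, we have $g(0)=0$, and alternatively one can average in $y$. In either case,
\[
g(y)\le\int_0^1|g'(t)|\,dt+\int_0^1 g\,dt\le 2\|f_x\|_{L^2}\|f_{xy}\|_{L^2}+\|f_x\|_{L^2}^2.
\]
Then $\sqrt{g}\le C(\|f_x\|_{L^2}+\|f_{xy}\|_{L^2})$ by AM--GM. Combining the two parts gives
\[
\|f\|_{L^\infty}\le C\big(\|f_x\|_{L^2}+\|f_y\|_{L^2}+\|f_{xy}\|_{L^2}\big).
\]

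The only delicate point is the mean/oscillation splitting. A naive $H^1$-in-$y$ argument would require $f_{yy}$ or an $x$-independent control, whereas splitting off $f_0$ lets the mean be handled with just $\|f_y\|_{L^2}$ and the boundary condition. Smoothness justifies every differentiation of $g$.
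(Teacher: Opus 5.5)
Your proof is correct and self-contained. The paper gives no argument for this lemma and simply refers to \cite{FGLT2}, so the comparison is with the usual proof of such anisotropic embeddings.

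That proof integrates in the opposite order. From $f(x,0)=0$ one has $|f(x,y)|\le\|f_y(x,\cdot)\|_{L^2(0,1)}$. The one-dimensional embedding on $\mathbb{T}$, applied to $h(x)=\|f_y(x,\cdot)\|_{L^2(0,1)}^2$, then gives
\[
\sup_x h\le\frac{1}{2\pi}\int_0^{2\pi}h\,dx+\int_0^{2\pi}|h'|\,dx\le C\big(\|f_y\|_{L^2}^2+\|f_{xy}\|_{L^2}^2\big).
\]
This route needs no mean/oscillation splitting and never touches $f_{yy}$. It also shows that the term $\|f_x\|_{L^2}$ in the statement is superfluous. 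So your closing remark overstates things: the splitting is not the ``only delicate point'', and a naive argument does not need $f_{yy}$. The splitting is a choice, not a necessity.

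What your route buys instead is the sharper intermediate bound
\[
\|f\|_{L^\infty}\le C\big(\|f_{0,y}\|_{L^2(0,1)}+\|f_x\|_{L^2}+\|f_{xy}\|_{L^2}\big),
\]
in which the only pure $y$-derivative falls on the $x$-mean. That is exactly the shape in which the lemma is used in the proof of Proposition~\ref{existence error}. There $\|(u,v)\|_{L^\infty}^2$ is bounded by $\|u_{0,y}\|_{L^2}^2$ plus terms carrying at least one $x$-derivative. With your form this follows at once from $\|f_x\|_{L^2}\le\|f_{xx}\|_{L^2}$ and $v_0\equiv 0$. The statement as written instead needs the extra Poincar\'e steps $\|\bar u_y\|_{L^2}\le\|u_{xy}\|_{L^2}$ and $\|v_y\|_{L^2}=\|u_x\|_{L^2}\le\|u_{xx}\|_{L^2}$.
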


\subsection{Existence for the error equations}
\begin{proposition} \label{existence error}
    There exist $\epsilon_0 > 0$ such that for any $\epsilon \in (0, \epsilon_0)$, the error equations (\ref{error})  have a solution $(u, v)$ which satisfies
    \[
    \|(u, v)\|_{L^{\infty}} \le C\epsilon.
    \]
\end{proposition}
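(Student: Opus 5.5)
We will obtain the solution as a fixed point of an iteration map on a ball in a midline-symmetric space. Define the norm
\[
\|(u,v)\|_{X}^2:=\int_\Omega\Big[u_e(y)(u_x^2+v_x^2)+\epsilon\big(u_{0,y}^2+u_e(y)u_y^2\big)+\epsilon^2\big(u_{xx}^2+u_{xy}^2+v_{xx}^2+v_{xy}^2\big)\Big]\,dx\,dy,
\]
and let $X$ be the closure of smooth divergence-free pairs in $\hat H(\Omega)\times\widetilde H(\Omega)$, periodic in $x$ and vanishing on $y=0,1$, with respect to this norm. For a given $(\tilde u,\tilde v)$ in the ball $B=\{\|(\tilde u,\tilde v)\|_X\le \epsilon^{M}\}$, with $M$ chosen below (around $3$ to $4$), let $(u,v)=\mathcal T(\tilde u,\tilde v)$ solve the linear problem (\ref{error}) with the right-hand side
\[
R_u=-R_u^a-\tilde u\tilde u_x-\tilde v\tilde u_y,\qquad R_v=-R_v^a-\tilde u\tilde v_x-\tilde v\tilde v_y .
\]
For fixed $\epsilon>0$ this linear Stokes-type problem is uniformly elliptic. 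We solve it by Galerkin approximation in the symmetric divergence-free class, or by Lax--Milgram combined with the a priori estimate, which gives uniqueness and hence, via Fredholm theory, existence. The symmetry class is preserved because $u^a,v^a$ satisfy (\ref{uasymmetric}) and the nonlinear terms respect the reflection $y\mapsto 1-y$, with $u$ even and $v$ odd. This preservation is what makes Proposition \ref{pro-linear stability estimate} applicable.

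Next we bound the right-hand side of (\ref{linear stability estimate}) in terms of $\|(u,v)\|_X$ and $\|(\tilde u,\tilde v)\|_X$. By Poincaré in $x$ (using $\int v\,dx=0$ and the mean-zero part $\bar u$), the weighted Hardy inequalities (\ref{hardy2}) and (\ref{weight estimate inequation}), and the odd symmetry of $v$:
\begin{itemize}
\item $\|v\|_{L^2}$ and $\|\bar u\|_{L^2}$ are controlled by $\epsilon^{-1}\|(u,v)\|_X$, using the $\epsilon^2$-weighted second derivatives (one pays a factor $\epsilon^{-1}$ to remove the degenerate weight);
\item $\|u_0\|_{L^2}\lesssim\|u_{0,y}\|_{L^2}\lesssim\epsilon^{-1/2}\|(u,v)\|_X$;
\item $\|u_x\|_{L^2},\|v_x\|_{L^2}\lesssim \epsilon^{-1}\|(u,v)\|_X$.
\end{itemize}
The residual contributes at most $C\epsilon^{5}\cdot\epsilon^{-1}\|(u,v)\|_X$, using $\|R^a\|_{L^2}+\|\partial_x R^a\|_{L^2}\le C\epsilon^5$. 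In the terms $\epsilon\int R u_{xx}$ we integrate by parts in $x$ to use $\partial_xR^a$.

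For the quadratic terms we use the anisotropic embedding (\ref{inequation2}):
\[
\|(\tilde u,\tilde v)\|_{L^\infty}\lesssim\|\tilde u_x\|+\|\tilde u_y\|+\|\tilde u_{xy}\|+\ldots\lesssim \epsilon^{-1}\|(\tilde u,\tilde v)\|_X .
\]
The non-degenerate $\|\tilde u_y\|_{L^2}$ is the delicate piece. It is split into $\|u_{0,y}\|$, which is available, and $\bar u_y$, which we control by $\|\bar u_{xy}\|\le\epsilon^{-1}\|\cdot\|_X$. The derivative factors are bounded in $L^2$ by $\epsilon^{-1}\|\cdot\|_X$ in the same way. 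The quadratic contributions to $\mathcal Q_\epsilon$ are therefore at most $C\epsilon^{-k}\|(\tilde u,\tilde v)\|_X^2\,(\|(u,v)\|_X+\|(\tilde u,\tilde v)\|_X^2)$ for some fixed $k$ (roughly $3$). Absorbing the linear-in-$\|(u,v)\|_X$ pieces gives
\[
\|(u,v)\|_X\le C\epsilon^{4}+C\epsilon^{-k}\|(\tilde u,\tilde v)\|_X^2 .
\]
Choosing $M$ with $4>M$ and $2M-k>M$ shows that $\mathcal T$ maps $B$ into itself for small $\epsilon$.

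The contraction estimate follows the same way. The difference $\mathcal T(\tilde u_1,\tilde v_1)-\mathcal T(\tilde u_2,\tilde v_2)$ solves (\ref{error}) with zero residual and with quadratic differences as source. This gives a Lipschitz constant $C\epsilon^{M-k}<1/2$, and Banach's theorem yields a fixed point. That fixed point is a symmetric solution of (\ref{error}).

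Finally, (\ref{inequation2}) gives $\|(u,v)\|_{L^\infty}\lesssim\epsilon^{-1}\|(u,v)\|_X\lesssim\epsilon^{M-1}\le C\epsilon$.

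The main obstacle is the exponent bookkeeping. The norm $X$ only weakly controls nondegenerate quantities (the factors $\epsilon^{-1}$ coming from $\epsilon^2$-weighted second derivatives and from the degenerate weight $u_e$), so we must check that the residual order $\epsilon^5$ beats the total loss $\epsilon^{-k}$. If the count is too tight, the first fallback is to include more terms in the approximate solution, which makes the residual smaller.
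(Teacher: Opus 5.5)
Your proposal is correct and is essentially the paper's argument. You freeze the quadratic terms and solve the symmetric linearized problem, then close the estimate $\|(\tilde u,\tilde v)\|_E\lesssim \epsilon^{-1}\|(R_u^a,R_v^a)\|_{L^2}+\epsilon^{-3}\|(u,v)\|_E^2$ using Proposition~\ref{pro-linear stability estimate}, the anisotropic embedding (\ref{inequation2}) and the $O(\epsilon^5)$ residual. Your contraction on the ball $\|(u,v)\|_E^2\le\epsilon^7$ (your $M=7/2$, $k=3$) gives $\|(u,v)\|_{L^\infty}\lesssim\epsilon^{5/2}$, as in the paper. Beyond the paper, you also justify solvability of the frozen linear problem (Fredholm alternative plus uniqueness from the a priori estimate, within the reflection-invariant class) and work in a completed space, both of which the paper leaves implicit.
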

\begin{proof}
    For a smooth function $(u,v)$ which satisfies
    \begin{align} \label{smooth function}
        \left \{
        \begin{aligned}
            &u_x+v_y=0, \\
            &(u,v)(x,y)=(u,v)(x+2\pi, y), \\
            &u(x,1)=u(x,0)=v(x,1)=v(x,0)=0, \\
            &u(x, y)=u(x, 1-y),~~v(x, y)=-v(x, 1-y),
        \end{aligned}
        \right .
    \end{align}
    we consider the following linear problem:
    \[
    \left \{
    \begin{aligned}
        &-\epsilon \triangle \tilde{u}+\tilde{p}_x+S_{\tilde{u}}=R_u, \\
        &-\epsilon \triangle \tilde{v}+\tilde{p}_y+S_{\tilde{v}}=R_v, \\
        &\partial_x\tilde{u}+\partial_y\tilde{v} = 0, \\
        &(\tilde{u}, \tilde{v})(x,y)=(\tilde{u}, \tilde{v})(x+2\pi,y),\\
        &\tilde{u}(x,1)=\tilde{v}(x,1)=\tilde{u}(x,0)=\tilde{v}(x,0)=0, \\
        &\tilde{u}(x,y)=\tilde{u}(x,1-y),~\tilde{v}(x,y)=-\tilde{v}(x,1-y),
    \end{aligned}
    \right .
    \]
    where
    \[
    \begin{aligned}
        &S_{\tilde{u}}:=u^a\tilde{u}_x+v^a\tilde{u}_y+\tilde{u}u_x^a+\tilde{v}u_y^a, \\
        &S_{\tilde{v}}:=u^a\tilde{v}_x+v^a\tilde{v}_y+\tilde{u}v_x^a+\tilde{v}v_y^a, \\
        &R_u:=-R_u^a-uu_x-vu_y,~~R_v:=-R_v^a-uv_x-vv_y.
    \end{aligned}
    \]
    By the linear stability estimate  (\ref{linear stability estimate}), we deduce that there exist $\epsilon_0 > 0$ such that for any $\epsilon \in (0, \epsilon_0)$, there holds
    \begin{align} \label{e1}
        \begin{aligned}
            &\|(\sqrt{u_e}\tilde{u}_x, \sqrt{u_e}\tilde{v}_x,\sqrt{\epsilon }\tilde{u}_{0,y},\sqrt{\epsilon u_e}\tilde{u}_y,\epsilon \tilde{u}_{xx}, \epsilon \tilde{u}_{xy}, \epsilon \tilde{v}_{xx}, \epsilon \tilde{v}_{xy})\|_{L^2}^2 \\[3pt]
            \leq &C\Big|\int_{\Omega}(R_u\tilde{u}_x+R_v\tilde{v}_x)\mathrm{d}x\mathrm{d}y\Big|+C\Big|\int_{\Omega}(u_eR_v^2+R_u^2)\mathrm{d}x\mathrm{d}y\Big|\\
            &+C\Big|\int_{\Omega}R_u\tilde{u}_0\mathrm{d}x\mathrm{d}y\Big|+C\epsilon \big | \int_{\Omega} (R_u\tilde{u}_{xx}+R_v\tilde{v}_{xx})\mathrm{d}x\mathrm{d}y\big |.
        \end{aligned}
    \end{align}
Using the Cauchy inequality and Poincaré inequality in $y$ direction, we deduce that 
\begin{align} \label{Ruux+Rvvx}
\begin{aligned}
    \Big|\int_{\Omega}(R_u\tilde{u}_x+R_v\tilde{v}_x)\mathrm{d}x\mathrm{d}y\Big| 
    \le & \frac{\epsilon^2}{10} \int_{\Omega} (\tilde{u}_{xy}^2+\tilde{v}_{xy}^2 ) \mathrm{d}x\mathrm{d}y+C \epsilon^{-2} \int_{\Omega} (R_u^2+R_v^2)\mathrm{d}x\mathrm{d}y,\\
    \Big|\int_{\Omega}R_u\tilde{u}_0\mathrm{d}x\mathrm{d}y\Big|\leq& \frac{\epsilon}{10}\|\tilde{u}_{0,y}\|_{L^2}^2+C\epsilon^{-1}\|R_u\|_{L^2}^2,\\
    \epsilon \Big | \int_{\Omega} (R_u\tilde{u}_{xx}+R_v\tilde{v}_{xx})\mathrm{d}x\mathrm{d}y\Big | \leq& \epsilon^4 \int_{\Omega} (\tilde{u}_{xx}^2+\tilde{v}_{xx}^2)\mathrm{d}x\mathrm{d}y+C\epsilon^{-2} \int_{\Omega} (R_u^2+R_v^2)\mathrm{d}x\mathrm{d}y.
\end{aligned}
\end{align}
Moreover, direct computation gives that
    \begin{align} \label{RuRv}
    \|(R_u, R_v)\|_{L^2}^2 \leq C\|(R_u^a, R_v^a)\|_{L^2}^2+C\|(u,v)\|_{L^{\infty}}^2\|(u_y, u_x, v_x)\|_{L^2}^2.
    \end{align}
    By Poincar$\acute{e}$ inequality in $x$, we have
    \begin{align} \label{uyuxvx}
    \|(u_y, u_x, v_x)\|_{L^2}^2 \leq C\|(u_{0,y}, u_{xy}, u_{xx}, v_{xx})\|_{L^2}^2.
    \end{align}
Insert (\ref{Ruux+Rvvx})-(\ref{uyuxvx}) into (\ref{e1}), we can deduce that
\begin{align} \label{total estimate}
    \begin{aligned}
    &\|(\sqrt{u_e}\tilde{u}_x, \sqrt{u_e}\tilde{v}_x,\sqrt{\epsilon }\tilde{u}_{0,y},\sqrt{\epsilon u_e}\tilde{u}_y, \epsilon \tilde{u}_{xx}, \epsilon \tilde{u}_{xy}, \epsilon \tilde{v}_{xx}, \epsilon \tilde{v}_{xy})\|_{L^2}^2 \\[3pt]
    \leq &  C\epsilon^{-2}\|(R_u^a,R_v^a)\|_{L^2}^2+C\epsilon^{-2}\|(u,v)\|_{L^{\infty}}^2(\|u_{0,y}\|_{L^2}^2+\|(u_{xx},v_{xx},v_{xy},u_{xy})\|_{L^2}^2).
    \end{aligned}
\end{align}
\indent Set the total energy to be
    \[
    \|(u,v)\|_E^2:=\|(\sqrt{u_e}u_x, \sqrt{u_e}v_x,\sqrt{\epsilon u_e}u_y,\sqrt{\epsilon} u_{0,y}, \epsilon u_{xx}, \epsilon v_{xx}, \epsilon v_{xy}, \epsilon u_{xy})\|_{L^2}^2.
    \]
   By (\ref{total estimate}), we deduce 
    \begin{align} 
        \|(\tilde{u}, \tilde{v})\|_E^2 \leq C\epsilon^{-2} \|(R_u^a, R_v^a)\|_{L^2}^2+C\epsilon^{-4}\|(u, v)\|_{L^{\infty}}^2\|(u, v)\|_E^2.\nonumber
    \end{align}   
    By anisotropic Sobolev inequality (\ref{inequation2}) and Poincaré inequality in $x$ direction, we have
    \[
    \|(u,v)\|_{L^{\infty}}^2 \leq C\|u_{0,y}\|_{L^2}^2+C\|(u_{xx}, v_{xx}, u_{xy}, v_{xy})\|_{L^2}^2 \leq C\epsilon^{-2}\|(u,v)\|_E^2.
    \]
    Thus, we have
    \begin{align} \label{e9}
        \|(\tilde{u}, \tilde{v})\|_E^2 \leq C\epsilon^{-2}\|(R_u^a, R_v^a)\|_{L^2}^2+C\epsilon^{-6}\|(u,v)\|_{E}^4.
    \end{align}
    Let $E:=\left \{ (u,v):(u,v) ~ satisfies ~(\ref{smooth function}) ~ and ~ \|(u,v)\|_E < +\infty  \right \}$. Thus, due to
    \[
    \|(R_u^a, R_v^a)\|_{L^2} \leq C\epsilon^5,
    \]
    there exist $\epsilon_0 >0,$ such that for any $\epsilon \in (0, \epsilon_0)$, the operator
    \[
    (u, v) \to (\tilde{u}, \tilde{v})
    \]
    maps the ball $B := \left \{ (u,v) : \|(u,v)\|_E^2 \leq \epsilon^{7} \right \}$ in $B$ into itself. \\
    Moreover, for every two pairs $(u_1, v_1)$ and $(u_2, v_2)$ in the ball, we have
    \begin{align} \label{contraction}
        \|(\tilde{u}_1-\tilde{u}_2, \tilde{v}_1-\tilde{v}_2)\|_E^2 \leq C\epsilon^{-6}\big( \|(u_1, v_1)\|_E^2+\|(u_2, v_2)\|_E^2 \big) \|(u_1-u_2, v_1-v_2)\|_E^2.
    \end{align}
    In fact, set
    \[
    \tilde{U} := \tilde{u}_1-\tilde{u}_2,~~\tilde{V}:=\tilde{v}_1-\tilde{v}_2,~~\tilde{P}=\tilde{p}_1-\tilde{p}_2,
    \]
    then we have
    \[
    \left \{
    \begin{aligned}
        &-\epsilon \triangle\tilde{U}+\tilde{P}_x+S_{\tilde{U}} = R_U, \\
        &-\epsilon \triangle \tilde{V}+\tilde{P}_y+S_{\tilde{V}} = R_V, \\
        &\partial_x\tilde{U}+\partial_y\tilde{V}=0, \\
        &(\tilde{U}, \tilde{V})(x, y) = (\tilde{U}, \tilde{V})(x+2\pi, y), \\
        &\tilde{U}(x, 1) = \tilde{V}(x,1)=\tilde{U}(x, 0) = \tilde{V}(x, 0), \\
        &\tilde{U}(x,y)=\tilde{U}(x,1-y),~\tilde{V}(x,y)=-\tilde{V}(x,1-y),
    \end{aligned}
    \right .
    \]
    where
    \[
    \begin{aligned}
    R_{U} := R_{u_1}-R_{u_2}=&u_2 u_{2,x}+v_2u_{2,y}-u_1u_{1,x}-v_1u_{1,y} \\
    =& (u_2-u_1)\partial_xu_2+u_1\partial_x(u_2-u_1)+(v_2-v_1)\partial_y u_2+v_1\partial_y (u_2-u_1),
    \end{aligned}
    \]
    \[
    \begin{aligned}
    R_{V} := R_{v_1}-R_{v_2}=&u_2 v_{2,x}+v_2v_{2,y}-u_1v_{1,x}-v_1v_{1,y} \\
    =& (u_2-u_1)\partial_xv_2+u_1\partial_x(v_2-v_1)+(v_2-v_1)\partial_y v_2+v_1\partial_y (v_2-v_1).
    \end{aligned}
    \]
    Thus, following the estimate (\ref{e9}) line by line, we obtain (\ref{contraction}). Hence, there exist $\epsilon_0 > 0$ such that for any $\epsilon \in (0, \epsilon_0)$, the operator
    \[
    (u, v) \to (\tilde{u}, \tilde{v})
    \]
    maps the ball $B := \left \{ (u,v) : \|(u,v)\|_E^2 \leq \epsilon^{7} \right \}$ into itself and is a contraction mapping. 
     This completes the proof.
\end{proof}
Finally, we give the proof of Theorem \ref{total theorem}.
\begin{proof}
    Combining the construction of approximate solution (\ref{construction approximate solution}), the estimate (\ref{estimate on approximate solution}) and Proposition \ref{existence error}, we can easily obtian Theorem \ref{total theorem}.
\end{proof}
\section{Appendix}

In this appendix, we give a construction of the corrector $ h(x, y) $ defined in subsection \ref{Approximate solutions}. First, we present a simple lemma, whose proof follows the argument of Lemma 6.1 in \cite{FGLT1}.

\begin{lemma} \label{lemma corrector}
    Assume that $ K(x, y) $ is a $ 2\pi- $periodic smooth function which satisfies 
    \[
    \int_{0}^{2\pi} K(x, y) \mathrm{d}x=0, \quad \forall y\in[0, 1]; \quad K(x,0)=K(x, 1)=0, 
    \]
    then there exist a $ 2\pi$-periodic function $ h(x,y) $ such that
    \begin{align} \label{corrector}
    \begin{aligned}
        &\partial_xh(x, y)=K(x, y);\quad h(x, 0)=h(x, 1)=0; \\
        &\int_{0}^{2\pi} h(x, y)\mathrm{d}x = 0; \quad \| \partial_x^j\partial_y^kh  \|_{L^2}\le C\| \partial_x^j\partial_y^kK \|_{L^2}.
    \end{aligned}
    \end{align}
\end{lemma}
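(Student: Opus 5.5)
The plan is to construct $h$ explicitly as the mean-zero $x$-antiderivative of $K$, one Fourier mode at a time. For each fixed $y$, write
\[
K(x,y)=\sum_{n\neq 0}\hat K_n(y)e^{inx},
\]
where there is no zero mode because $\int_0^{2\pi}K(x,y)\,\mathrm{d}x=0$. We then set
\[
h(x,y):=\sum_{n\neq0}\frac{\hat K_n(y)}{in}e^{inx}.
\]
Equivalently, in physical space,
\[
h(x,y)=\int_0^x K(\bar x,y)\,\mathrm{d}\bar x-\frac{1}{2\pi}\int_0^{2\pi}\int_0^{x'}K(\bar x,y)\,\mathrm{d}\bar x\,\mathrm{d}x'.
\]
The zero-mean assumption is exactly what makes $\int_0^xK\,\mathrm{d}\bar x$ a $2\pi$-periodic function of $x$. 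With this definition, $\partial_xh=K$ holds directly, $h$ is $2\pi$-periodic, and $\int_0^{2\pi}h\,\mathrm{d}x=0$ by construction.

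For the boundary values, $K(x,0)=0$ for all $x$ gives $\hat K_n(0)=0$ for every $n$, and the same holds at $y=1$. Therefore $h(x,0)=h(x,1)=0$. In the physical-space formula this is also immediate, since both terms vanish when $K(\cdot,0)\equiv0$.

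For the estimates, $\partial_x^j\partial_y^k$ commutes with the construction, because differentiating in $y$ acts only on the coefficients $\hat K_n(y)$. Hence
\[
\partial_x^j\partial_y^k h=\sum_{n\neq0}\frac{(in)^j\partial_y^k\hat K_n(y)}{in}e^{inx}.
\]
By Parseval in $x$ and $|1/(in)|\le1$ for $n\neq0$, we get the pointwise-in-$y$ bound
\[
\|\partial_x^j\partial_y^kh(\cdot,y)\|_{L^2_x}\le\|\partial_x^j\partial_y^kK(\cdot,y)\|_{L^2_x}.
\]
Integrating in $y$ then gives the claimed bound with $C=1$; this is the Poincaré inequality in $x$ for mean-zero functions. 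Smoothness of $h$ follows from that of $K$, since the Fourier coefficients $\hat K_n(y)$ decay rapidly in $n$ uniformly with their $y$-derivatives.

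The only point needing care is the interchange of $\partial_y^k$ with the Fourier series. This is justified by that same uniform rapid decay, so no real obstacle remains. The main substantive input is the hypothesis $\int_0^{2\pi}K\,\mathrm{d}x=0$: without it, $\partial_xh=K$ would have no periodic solution.
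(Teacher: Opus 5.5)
Your proposal is correct and uses the same construction as the paper: the Fourier series $h=\sum_{n\neq0}\frac{K_n(y)}{in}e^{inx}$, with $K_n(0)=K_n(1)=0$ giving the boundary values. You also write out the Parseval argument, which shows the derivative bounds hold with $C=1$; the paper leaves this as ``easy to justify.''
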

\begin{proof}
    By the Fourier series expansion, we have
    \[
    K(x, y)=\sum\limits_{n \ne 0} K_n(y)e^{inx},~~K_n(0)=K_n(1)=0.
    \]
    Set
    \[
    h(x, y)=\sum\limits_{n \ne 0} \frac{K_n(y)}{in} e^{inx}.
    \]
    It's easy to justify that $h(x, y)$ satisfies (\ref{corrector}) which completes the proof.\\
\end{proof}
Then we construct the corrector $ h(x, y) $ by the above lemma. Direct computation gives
\[
\begin{aligned} 
\partial_x(u_e^a+\tilde{u}_p^a)+\partial_y(v_e^a+\tilde{v}_p^a)=&\chi '(y)(\hat{v}_p^a-v_p^a) \\
=&\epsilon^5\chi'(y)[y^{-15}\gamma^{15}\tilde{v}_p^a-(y-1)^{-15}\xi^{15}v_p^a]=:-\epsilon^5K(x, y).
\end{aligned}
\]
Notice that $ \chi'(y)=0 $, for $ y \in [0, \frac{1}{4}]\cup[\frac{3}{4}, 1] $, and the properties of $ (v_p^a $, $ \hat{v}_p^a) $, we know that $ K(x, y) $ satisfies the assumption in Lemma \ref{lemma corrector}, then there exists $ h(x, y) $ such that
\[
\partial_x(u_e^a+\tilde{u}_p^a)+\partial_y(v_e^a+\tilde{v}_p^a)=-\epsilon^5\partial_xh(x, y),
\]
which indicates that
\[
\partial_xu^a+\partial_yv^a=0.
\]
What's more, since $\chi'(y)=\chi'(1-y)$, we can get $K(x, y)=K(x, 1-y)$, which implies
\[
h(x, y)=h(x, 1-y).
\]

\section*{Acknowledgments}

 T.Tao is partially supported by the NSF of China under Grant 12371234 and 12671282.\\

\textbf{Data availability statement}

We do not analyse or generate any datasets, because our work proceeds within a theoretical and mathematical approach.\\

\textbf{Conflict of interest}

On behalf of all authors, the corresponding author states that there is no conflict of interest.

\end{document}